\newtheorem{thm}{Theorem}[section]
\newtheorem{cor}{Corollary}[section]
\newtheorem{lemma}{Lemma}[section]
\newtheorem{remark}{Remark}[section]
\newtheorem{defn}{Definition}[section]
\newcounter{nextauthor}
\def\mathrm{\mbox}
\numberwithin{remark}{section}
\begin{document}
\title{{\bf Robust equilibrium strategy for mean-variance-skewness portfolio selection problem}\thanks{This work was supported by the National Natural Science Foundation of China (11671282, 12171339).}}
\author{Jian-hao Kang$^a$, Nan-jing Huang$^a$\thanks{Corresponding author. E-mail addresses: nanjinghuang@hotmail.com; njhuang@scu.edu.cn}, Zhihao Hu$^{b,c}$ and Ben-Zhang Yang$^b$ \\
{\small\it  a. Department of Mathematics, Sichuan University, Chengdu, Sichuan 610064, P.R. China}\\
{\small\it  b. National Institution for Finance $\&$ Development, Chinese Academy of Social Science, Beijing, P.R. China}\\
{\small\it  c.  Institute of Finance $\&$ Banking, Chinese Academy of Social Science, Beijing, P.R. China}}
\date{}
\maketitle \vspace*{-9mm}
\begin{abstract}
\noindent
This paper considers a robust time-consistent mean-variance-skewness portfolio selection problem for an ambiguity-averse investor by taking into account wealth-dependent risk aversion and wealth-dependent skewness preference as well as model uncertainty. The robust equilibrium investment strategy and corresponding equilibrium value function are characterized for such a problem by employing an extended Hamilton-Jacobi-Bellman-Isaacs (HJBI) system via a game theoretic approach. Furthermore, the robust equilibrium investment strategy and corresponding equilibrium value function are obtained in semi-closed form for a special robust time-consistent mean-variance-skewness portfolio selection problem. Finally, some numerical experiments are provided to indicate several new findings concerned with the robust equilibrium investment strategy and the utility losses.
\\ \ \\
\noindent {\bf Keywords}: Time-inconsistency; mean-variance-skewness criterion; model uncertainty; wealth dependence; robust equilibrium  investment strategy.
\\ \ \\
\noindent \textbf{AMS Subject Classification:} 91G80, 93E20, 60H30.
\end{abstract}

\section{Introduction}
The mean-variance criterion, proposed by Markowitz \cite{Markowitz1952}, has long been deemed as the foundation of modern portfolio selection theory. From then on, the one-period model \cite{Markowitz1952} has been widely extended to general models, for instance, the discrete multi-period model \cite{Li2000}, the continuous time model \cite{Zhou2000}, the incomplete market model \cite{Lim2004}, the nonnegative terminal state constrained model \cite{Bielecki2005} and more on. Different methods are used to solve these models, for example, Zhou and Li \cite{Zhou2000} apply the "embedding" approach to change the original problem into a standard linear quadratic problem, Lim \cite{Lim2004} adopts the stochastic maximum principle to obtain the forward backward stochastic differential equation and Bielecki et al. \cite{Bielecki2005} deal with the constraints by the Lagrange multiplier method. The common point of these papers is that their models are solved at the initial time and the corresponding optimal controls are the so-called pre-committed optimal strategies that are optimal only at the fixed initial time, namely, the pre-committed strategies are not time-consistent.

In some real situations, it is wise for investors to take into account the dynamic changes of the latest information to constantly make the optimal decisions, which inspires researchers and practitioners to seek the time-consistent optimal strategies. Strotz \cite{Strotz1955} firstly uses game theory to deal with time inconsistency. Thereafter, many authors have followed the game theoretic approach to study various time-inconsistent problems under different conditions. For instance, Basak and Chabakauri \cite{Basak2010} utilize a backward sequential game to introduce the equilibrium feedback strategy for a mean-variance portfolio problem with constant risk aversion. Bj{\"{o}}rk and Murgoci \cite{Bjork2009} investigate a class of time-inconsistent control problem in a general Markovian model under a game theoretic framework. They derive a system of extended Hamilton-Jacobi-Bellman (HJB) equation and prove the associated verification theorem to determine the equilibrium strategy as well as the equilibrium value function. Hu et al. \cite{Hu2012} define a time-consistent open-loop equilibrium solution to discuss a general time-inconsistent stochastic linear-quadratic control problem. Other important discussions and results of time-inconsistent problems and portfolio selection problems can be also found in the recent literature \cite{Bjork2014,  Ekeland2006, Ekeland2008}.

Along with great process in investigating mean-variance problems through a time-consistent approach, especially ones in \cite{Bjork2009, Hu2012}, the characterization of the investors' risk attitude has also attracted considerable research attention. To capture the dynamics of the investors' risk aversion, the constant risk aversion coefficients in mean-variance models could be replaced by the state dependent risk aversion.  For instance, Bj\"{o}rk et al. \cite{Bjork2014} point out that it is unrealistic from an economic point of view to assume the risk aversion parameter in \cite{Bjork2009} is a constant. They consider a new mean-variance portfolio selection with state dependent risk aversion and show that the equilibrium dollar amount invested in the risky asset is proportional to current wealth when the risk aversion function is inversely proportional to wealth. Following the work \cite{Bjork2014}, Li and Li \cite{LiY2013Optimal} further investigate the optimal time-consistent investment and reinsurance problem under mean-variance criterion with state dependent risk aversion. On the other hand, in the context of finance and economy, there has been rapid development in extending mean-variance models by adding higher order risk preferences. For example, Eeckhoudt and Schlesinger \cite{Eeckhoudt2006} depict the 3rd-order and 4th-order risk preferences outside the expected utility framework, and analyze the effects of 4th-order risk preference on saving in \cite{Eeckhoudt2008}. Lai \cite{Lai2012} uses real price data to study how hedging behaviours may vary when hedgers take into account skewness and excess kurtosis of hedging returns in the decision models. Recently, Ebert and Hilpert \cite{Ebert2019} show that investors' preference for positive skewness satisfies the popularity of technical analysis. It is worth mentioning that the introduction of higher order risk preferences into the portfolio selection problems in the continuous-time financial market results in nonlinear structure such that it is difficult to find (semi-)closed form for the optimal investment strategies. Skewness, as the moment after mean and variance, has attracted the attention of researchers and practitioners because of induced asymmetries in ex post returns. Very recently, Mu et al. \cite{Mu2019} study a portfolio selection problem with both wealth-dependent risk aversion and wealth-dependent skewness preference and obtain the equilibrium investment strategy in semi-closed form. More related studies can also be found in \cite{Brocas2016, Dai2021, Gu2020, Kimball1990} and the references therein.

All of the aforementioned works focus only on the portfolio problems for the ambiguity-neutral investors. However, Ellsberg \cite{Ellsberg1961} discloses the inadequacy of utility theory and shows that investors are not only risk-averse but also ambiguity-averse. Since there exist many factors in the financial market which are beyond the knowledge of the investor, but may have significant effects on the investor's decisions, which means that ambiguity is inevitable in practice. Thus, it is important and necessary to consider the impact of ambiguity on portfolio choice problems for the ambiguity-averse investors. For example, Maenhout \cite{Maenhout2004} investigates the impacts of ambiguity on the intertemporal portfolio selection in a setting with constant investment opportunities. Zeng et al. \cite{Zeng2016} study the equilibrium strategy of a robust optimal reinsurance-investment problem under the mean-variance criterion in a model with jumps. Pun \cite{Pun2018} establishes a general analytical framework for continuous-time stochastic control problems for an ambiguity-averse agent with time-inconsistent preference. Similar investigations and results of robust portfolio selection can be also found in \cite{Branger1, Branger2, Wang2021, Yan2020, Yang20a, Yi2013}.

To the best of our knowledge, there is few work in literature considering robust time-consistent mean-variance-skewness portfolio selection problems for the ambiguity-averse investor under the mean-variance-skewness criterion with wealth-dependent risk aversion and wealth-dependent skewness preference as well as model uncertainty in the continuous time economy.  Inspired and motivated by the above research works, we attempt to fill this gap by establishing such a model and studying how wealth-dependent risk aversion and wealth-dependent skewness preference as well as model uncertainty impact the robust equilibrium investment strategy in the continuous-time financial market. The main contribution of this paper is threefold. First, wealth-dependent risk aversion, skewness preference with wealth-dependent coefficient and model uncertainty are introduced into the mean-variance portfolio problem and then a new robust investment model is established, which extends the models investigated in \cite{Mu2019,Pun2018}. Specially, the wealth-dependent risk aversion, wealth-dependent
skewness preference and ambiguity-aversion could be dependent, while some studies (see, for example, \cite{Yan2020, Zeng2016}) only consider the independent case of risk aversion and ambiguity-aversion for simplicity. Second, the detailed verification of the admissibility of our newly derived robust equilibrium investment strategy is given, which makes our paper more complete from a mathematical point of view, whereas neither \cite{Mu2019} nor \cite{Pun2018} verifies it. Third, the effects of model parameters on the robust equilibrium investment strategy and the utility losses from ignoring the skewness preference and model uncertainty
are studied, which are not considered in \cite{Mu2019,Pun2018}.

The rest of this paper is organized as follows. Next section gives the characterization of the continuous time robust mean-variance-skewness portfolio selection problem. In Section 3, we adopt the game theoretic approach to derive an extended HJBI system and an verification theorem to capture the robust equilibrium investment strategy and corresponding equilibrium value function for the robust time-consistent mean-variance-skewness portfolio selection problem. In Section 4, we obtain the robust equilibrium investment strategy in semi-closed form for a special continuous time robust mean-variance-skewness portfolio selection model.  Before summarizing this paper in Section 6, we provide some numerical experiments to illustrate our results in Section 5.

\section{Problem formulation}
 Throughout this paper, we assume that an investor can continuously invest in a financial market in which there are no transaction costs or taxes. Given a positive finite constant $T$ representing the terminal time, let $(\Omega,\mathcal{F},\mathbb{F},\mathbb{P})$ denote a complete probability space equipped with a natural filtration $\mathbb{F}=(\mathcal{F}_{t})_{t\in[0,T]}$ satisfying the usual hypothesis and a physical probability measure $\mathbb{P}$, where $\mathbb{F}$ is generated by $\mathbb{P}$-Brownian motions $W^{S}_{t}$ and $W^{Z}_{t}$ to be defined later. Let $\mathbb{C}^{1,2,1,2}([0,T]\times R^{m}\times [0,T]\times R^{n})$ be the set of all the continuous functions $\phi=\phi(t,x,s,y):[0,T]\times R^{m}\times [0,T]\times R^{n}\rightarrow R$ such that $\frac{\partial \phi}{\partial t}$, $\frac{\partial \phi}{\partial x}$, $\frac{\partial^{2} \phi}{\partial x^{2}}$, $\frac{\partial \phi}{\partial s}$, $\frac{\partial \phi}{\partial y}$, $\frac{\partial^{2} \phi}{\partial y^{2}}$ exist and are continuous in $(t,x,s,y)$ for generic integers $m,n \geq1$. With any generic integers $p,n \geq1$ and a given measure $\mathbb{P}$, we denote
$$
\mathbb{L}_{\mathcal{F}}^{p}(0,T;R^{n},\mathbb{P})=\left\{X:[0,T]\times\Omega\rightarrow R^{n}\bigg| X \text{ is }  \{\mathcal{F}_{t}\}_{t\in[0,T]}\text{-adapted}, \;\mathbb{E}^{\mathbb{P}}\left[\int^{T}_{0}\parallel X(s)\parallel^{p}\mathrm{d}s\right]<\infty\right\}.
$$

\subsection{Model assumptions}
We consider a financial market with a risk-free asset (e.g., bank deposit) and $M$ risky assets (e.g., stocks). Specifically, we assume that the price process $S^{0}_{t}$ of the risk-free asset follows
\begin{equation}\label{eq1}
  \mathrm{d}S^{0}_{t}=r(t,Z_{t})S^{0}_{t}\mathrm{d}t
\end{equation}
and the price process $S^{i}_{t}$ of the $i$th risky asset satisfies the following stochastic differential equation (SDE)
\begin{equation}\label{eq2}
  \mathrm{d}S^{i}_{t}=\mu_{i}(t,Z_{t})S^{i}_{t}\mathrm{d}t+\sigma_{i}(t,Z_{t})'S^{i}_{t}\mathrm{d}W^{S}_{t}, \quad i=1,\cdots,M.
\end{equation}
Here, $r$ and $\mu_{i}$ are one dimensional functions, $\sigma_{i}$ is an $M$-dimensional vector function, $W^{S}_{t}$ is a standard $M$-dimensional Brownian motion, and $Z_{t}$ is an $N$-dimensional vector denoting the state variable in the financial market and is assumed to evolve as follows
\begin{equation}\label{eq3}
  \mathrm{d}Z_{t}=\chi(t,Z_{t})\mathrm{d}t+\nu(t,Z_{t})\mathrm{d}W^{Z}_{t},
\end{equation}
where $\chi$ is an $N$-dimensional vector function, $\nu\nu'$ is an $N\times N$ matrix and $W^{Z}_{t}$ is an $N$-dimensional Brownian motion satisfying $Cov(W^{S}_{t},(W^{Z}_{t})')$ $ =\rho(t)t$ where $\rho(t)$ is an $M\times N$ matrix function of $t$ and its elements satisfy $\rho_{i,j}(t)\in[-1,1]$ for $i=1,\cdots,M,\;j=1,\cdots,N$. Note that the randomness of this financial market is captured by an $(M+N)$-dimensional Brownian motion $({W^{S}_{t}}',{W^{Z}_{t}}')'$. For convenience, the $M\times1$ vector $[\mu_{1}(t,Z_{t}),\cdots,\mu_{M}(t,Z_{t})]'$ and the $M\times M$ matrix $[\sigma_{1}(t,Z_{t}),\cdots,\sigma_{M}(t,Z_{t})]'$ are denoted by $\mu_{t}$ and $\sigma_{t}$, respectively, when no confusion occurs. Similar expressions are also used later.

In what follows, we suppose that the self-financing investor can invest in the financial market at time $t$ over the finite horizon $[0,T]$. The investment strategy is denoted by $\bm{\pi}=\{u_{t}\}_{t\in[0,T]}$, where $u_{t}=(u^{1}_{t},\cdots,u^{M}_{t})'$
and $u^{i}_{t}$ corresponds to the money amount invested in the $i$th risky asset. Therefore, the money amount invested in the risk-free asset can be presented by $W_{t}-\sum_{i=1}^{M} u^{i}_{t}$, where $W_{t}$ is the investor's wealth at time $t$. Given a positive initial wealth $w_{0}$, the investor's wealth process $W_{t}$ for $t\in[0,T]$ is governed by
\begin{equation}\label{eq4}
   \mathrm{d}W_{t}=\left[r_{t}W_{t}+u_{t}'\left(\mu_{t}-r_{t} \textbf{1}\right)\right]\mathrm{d}t+u_{t}'\sigma_{t}\mathrm{d}W^{S}_{t},
\end{equation}
where $\textbf{1}:=(1,\cdots,1)'\in R^{M}$.

\subsection{Ambiguity-averse investor under mean-variance-skewness preference}
In the following, we are interested in the investment problem of the ambiguity-averse investor under mean-variance-skewness criterion with wealth-dependent risk aversion and skewness preference. To proceed, we denote a state variable by $X=(W,Z')'$ and call $\mathbb{P}$ the reference measure.

It is well known that the mean-variance criterion is famous and popular in portfolio selection problems because mean and variance can be used to describe the expected return and the risk of investment, respectively. Moreover, in order to capture investor's risk preference, state-dependent risk aversion and higher order risk preferences have been considered in portfolio selection problems (see, for example, \cite{Bjork2014, LiY2013Optimal, Mu2019}). We note that in \cite{Mu2019},  the ambiguity-neutral investor's objective function is assumed to have the following form
\begin{equation}\label{eq5}
  \sup\limits_{\bm{\pi}\in\Pi}\quad \mathbb{E}^{\mathbb{P}}_{t,x}\left[W_{T}\right]-\frac{\gamma(W_{t})}{2}Var^{\mathbb{P}}_{t,x}\left[W_{T}\right]+\frac{\phi(W_{t})}{3}Skew^{\mathbb{P}}_{t,x}\left[W_{T}\right],
\end{equation}
where $x=X_{t}$, $\mathbb{E}^{\mathbb{P}}_{t,x}$ and $Var^{\mathbb{P}}_{t,x}$ are the conditional expectation and the conditional variance under the single physical probability measure $\mathbb{P}$, respectively, $W_{T}$ is the investor's terminal wealth under the investment strategy $\bm{\pi}$, $\Pi$ is a set of admissible investment strategies that will be given below, $\gamma(W_{t})$ is the wealth-dependent risk aversion, $\phi(W_{t})$ is the wealth-dependent preference for skewness, and $$
Skew^{\mathbb{P}}_{t,x}\left[W_{T}\right]=\mathbb{E}^{\mathbb{P}}_{t,x}\left[(W_{T})^{3}\right]-3\mathbb{E}^{\mathbb{P}}_{t,x}\left[W_{T}\right]
\mathbb{E}^{\mathbb{P}}_{t,x}\left[(W_{T})^{2}\right]+2(\mathbb{E}^{\mathbb{P}}_{t,x}\left[W_{T}\right])^{3}.
$$
In addition, similar to the previous investigations on the high-order preference \cite{Brocas2016, Eeckhoudt2006, Kimball1990}, we can suppose that $\gamma(W_{t})>0$, $\frac{\mathrm{d} \gamma(W_{t})}{\mathrm{d}W_{t}}<0$, $\phi(W_{t})>0$ and $\frac{\mathrm{d} \phi(W_{t})}{\mathrm{d}W_{t}}<0$. Equations $\eqref{eq2}$-$\eqref{eq5}$ are referred to as the reference model. Note that the ambiguity-neutral investor has the complete confidence in the reference model under the reference measure $\mathbb{P}$.

However, any choice of probability measure would result in model misspecifications because the investor is unable to obtain all the information about the true model in general. Thus, it is desirable to consider models with uncertainties. More specifically, we assume that the investor chooses the reference measure $\mathbb{P}$ to regard it as an approximation of the true measure and then take into account alternative measures to obtain the robust optimal investment strategy. It is well known that the corresponding alternative models can be conceptually defined as the models that are "similar" to the reference model and the "similarity" can be characterized by the mathematical concept of measure equivalence \cite{Anderson2003}. Consider a set of measures that are equivalent to $\mathbb{P}$: $\mathcal{Q}:=\{\mathbb{Q}:\mathbb{Q}\sim\mathbb{P}\}$. By the Girsanov theorem, we know that there exists an $\mathbb{F}$-adapted stochastic process $\textbf{q}=\{(q^{S'}_{t},q^{Z'}_{t})'\}_{t\in[0,T]}$, where $q^{S}$ and $q^{Z}$ are valued in $R^{M}$ and $R^{N}$, respectively, such that
\begin{equation}\label{eq6}
  \frac{\mathrm{d}\mathbb{Q}}{\mathrm{d}\mathbb{P}}\bigg|_{\mathcal{F}_{t}}=\mathcal{\varepsilon}_{t}\left(q^{S}\cdot W^{S}\right)\mathcal{\varepsilon}_{t}\left(q^{Z}\cdot W^{Z}\right),
\end{equation}
where the stochastic exponential $\mathcal{\varepsilon}_{t}$ is defined as
$$
\mathcal{\varepsilon}_{t}\left(Y\cdot W^{Y}\right)=\exp\left(-\frac{1}{2}\int^{t}_{0}\|Y(s)\|^{2}\mathrm{d}s+\int^{t}_{0}Y(s)'\mathrm{d}W^{Y}(s)\right)
$$
for any measurable process $Y$ adapted to the natural filtration generated by the standard Brownian motion $W^{Y}$. Moreover, if $\textbf{q}$ satisfies the Novikov condition
\begin{equation}\label{eq7}
\mathbb{E}^{\mathbb{P}}\left[\exp\left(\frac{1}{2}\int^{T}_{0}\|\textbf{q}_{s}\|^{2}\mathrm{d}s\right)\right]<\infty,
\end{equation}
then two processes
\begin{equation*}
   W^{S,\mathbb{Q}}_{t}=W^{S}_{t}-\int^{t}_{0}q^{S}_{s}\mathrm{d}s,  \quad W^{Z,\mathbb{Q}}_{t}=W^{Z}_{t}-\int^{t}_{0}q^{Z}_{s}\mathrm{d}s
\end{equation*}
are standard $\mathbb{Q}$-Brownian motions valued in $R^{M}$ and $R^{N}$, respectively. Thus, equations $\eqref{eq2}$-$\eqref{eq4}$ in the reference model can be replaced by the following SDEs:
\begin{align*}
\begin{cases}
\mathrm{d}S^{i}_{t}=\left(\mu_{i}(t,Z_{t})S^{i}_{t}+\sigma_{i}(t,Z_{t})^{'}S^{i}_{t}q^{S}_{t}\right)\mathrm{d}t+\sigma_{i}(t,Z_{t})^{'}S^{i}_{t}\mathrm{d}W^{S,\mathbb{Q}}_{t},\\
\mathrm{d}Z_{t}=\left(\chi(t,Z_{t})+\nu(t,Z_{t})q^{Z}_{t}\right)\mathrm{d}t+\nu(t,Z_{t})\mathrm{d}W^{Z,\mathbb{Q}}_{t},\\
\mathrm{d}W_{t}=\left[r_{t}W_{t}+u_{t}'\left(\mu_{t}-r_{t} \textbf{1}\right)+u_{t}'\sigma_{t}q^{S}_{t}\right]\mathrm{d}t+u_{t}'\sigma_{t}\mathrm{d}W^{S,\mathbb{Q}}_{t}
\end{cases}
\end{align*}
and the dynamic process of $X$ can be expressed as
\begin{equation}\label{eq8}
  \mathrm{d}X_{t}=\Gamma(t,x,\bm{\pi},\textbf{q})\mathrm{d}t+\Psi(t,x,\bm{\pi},\textbf{q})\mathrm{d}W^{X}_{t}.
\end{equation}
Here
$$
\Gamma(t,x,\bm{\pi},\textbf{q})=
\begin{bmatrix}
r_{t}W_{t}+u_{t}'\left(\mu_{t}-r_{t} \textbf{1}\right)+u_{t}'\sigma_{t}q^{S}_{t}\\
\chi_{t}+\nu_{t}q^{Z}_{t}
\end{bmatrix},
\quad
\Psi(t,x,\bm{\pi},\textbf{q})=
\begin{bmatrix}
  u_{t}'\sigma_{t} &  \textbf{0}\\
  \textbf{0}  &  \nu_{t}
\end{bmatrix},
\quad
\mathrm{d}W^{X}_{t}=
\begin{bmatrix}
\mathrm{d}W^{S,\mathbb{Q}}_{t}\\
\mathrm{d}W^{Z,\mathbb{Q}}_{t}
\end{bmatrix},
$$
where $\textbf{0}$ represents a vector of zeros with appropriate dimension. In addition, let $\Omega(t,x,\bm{\pi},\textbf{q})$ denote the variance-covariance matrix of $X_{t}$ satisfying
$$
\Omega(t,x,\bm{\pi},\textbf{q})=(\Omega_{i,j})=\Psi(t,x,\bm{\pi},\textbf{q})\cdot
\begin{bmatrix}
 I_{M} &   \rho_{t}\\
 \rho'_{t}  &  I_{N}
\end{bmatrix}\cdot\Psi(t,x,\bm{\pi},\textbf{q})',
$$
where $I_{i}$ is an $i\times i\;(i=M, N)$ identity matrix and the operator $\cdot$ denotes the Hadamard product of two matrices.

Based on the above discussions, we incorporate the model uncertainty into the investment problem of the ambiguity-averse investor under mean-variance-skewness preference. Let $W^{\mathbb{P}}_{t}=({W^{S}_{t}}^{'},{W^{Z}_{t}}^{'})^{'}$. Motivated by the investigations of \cite{Pun2018, Pun2}, the objective function $\eqref{eq5}$ can be changed into
\begin{equation}\label{eq9}
  \sup\limits_{\bm{\pi}\in \Pi}\inf\limits_{\textbf{q}\in\bm{Q}}\quad \mathbb{E}^{\mathbb{Q}}_{t,x}\left[W_{T}\right]-\frac{\gamma(W_{t})}{2}Var^{\mathbb{Q}}_{t,x}\left[W_{T}\right]
  +\frac{\phi(W_{t})}{3}Skew^{\mathbb{Q}}_{t,x}\left[W_{T}\right]+D_{t,x}(\mathbb{Q}\parallel\mathbb{P}),
\end{equation}
where $\bm{Q}$ is a set of admissible measures that will be given below and $D_{t,x}(\mathbb{Q}\parallel\mathbb{P})$ is a generalized Kullback-Leibler divergence between measures $\mathbb{Q}$ and $\mathbb{P}$, which is given by the following form \cite{Maenhout2004}
$$
\mathbb{E}^{\mathbb{Q}}_{t,x}\left[\int^{T}_{t}C(x,s,X_{s};\textbf{q})\mathrm{d}s\right]:=\mathbb{E}^{\mathbb{Q}}_{t,x}
\left[\int^{T}_{t}\frac{\textbf{q}'_{s}\Xi(x)^{-1}\textbf{q}_{s}}{2\Phi(s,X_{s}) }\mathrm{d}s\right].
$$
Here, $\Xi(x)=diag(\xi_{1}(x),\cdots,\xi_{M+N}(x))$ ($\xi_{i}(x)>0$) is a diagonal matrix describing the state-dependent ambiguity aversion level about the uncertainty source $W^{\mathbb{P}}_{it}$ of the state variable $X_{t}$ for $i=1,\cdots,M+N$, and $\Phi(\cdot,\cdot)$ is an $R^{+}$-valued function measuring the strength of the investor's preference for ambiguity, i.e., $\Phi(\cdot,\cdot)$ stands for the degree of the investor's confidence in the reference model. We notice that functions $\gamma(\cdot)$, $\phi(\cdot)$, $\Xi(\cdot)$ and $\Phi(\cdot,\cdot)$ are state-dependent and all $\gamma(W_{t})$, $\phi(W_{t})$ and $\Xi(x)$ depend on the wealth value $W_{t}$ at time $t$ especially, which reflects that the wealth-dependent risk aversion, wealth-dependent skewness preference and ambiguity-aversion could be dependent in this paper.

Before solving the model $\eqref{eq9}$, we consider a more general model as follows
\begin{align}\label{eq10}
&\sup\limits_{\bm{\pi}\in \Pi}\inf\limits_{\textbf{q}\in\bm{Q}} \; J(t,x;\bm{\pi},\textbf{q}):=\\
&\sup\limits_{\bm{\pi}\in \Pi}\inf\limits_{\textbf{q}\in\bm{Q}} \; \left\{\mathbb{E}^{\mathbb{Q}}_{t,x}\left[F(t,x,W_{T})\right]+G\left(t,x,\mathbb{E}^{\mathbb{Q}}_{t,x}\left[W_{T}\right]\right)
  +K\left(t,x,\mathbb{E}^{\mathbb{Q}}_{t,x}\left[W_{T}\right],\mathbb{E}^{\mathbb{Q}}_{t,x}\left[(W_{T})^{2}\right]\right)
  +D_{t,x}(\mathbb{Q}\parallel\mathbb{P})\right\},\nonumber
\end{align}
where $F$, $G$ and $K$ are arbitrary functions valued in $R$, and $D_{t,x}(\mathbb{Q}\parallel\mathbb{P})=\mathbb{E}^{\mathbb{Q}}_{t,x}\left[\int^{T}_{t}C(t,x,s,X_{s};\textbf{q})\mathrm{d}s\right]$.

Clearly, if functions $F$, $G$ and $K$ are given by
\begin{equation}\label{eq11}
  F(t,x,y)=y-\frac{\gamma(x)}{2}y^{2}+\frac{\phi(x)}{3}y^{3},\quad G(t,x,g)=\frac{\gamma(x)}{2}g^{2}+\frac{2\phi(x)}{3}g^{3}, \quad K(t,x,g,h)=-\phi(x)gh,
\end{equation}
respectively, then the model $\eqref{eq10}$ can degenerate into $\eqref{eq9}$.

Since the state-dependence of $F$, $G$, $K$ and $D_{t,x}$ as well as the non-linearity of $G$ and $K$ make Bellman's dynamic programming principle invalid, model \eqref{eq10} belongs to the robust time-inconsistent stochastic control problems. The overarching goal of this paper is to deal with the time-inconsistency by using the method of game theory and to obtain the robust equilibrium investment strategy for model \eqref{eq10}.
\begin{remark}\label{remark1}
{\rm(i)} If $K\equiv0$, then model $\eqref{eq10}$ becomes the one studied in \cite{Pun2018, Pun2}; {\rm(ii)} If $D_{t,x}(\mathbb{Q}\parallel\mathbb{P})=0$, then model $\eqref{eq10}$ taking the form of $\eqref{eq11}$ degenerates into the one investigated in \cite{Mu2019}. Moreover, if $K\equiv0$, then model $\eqref{eq10}$ is the same as the one considered in \cite{Bjork2014}.
\end{remark}

\section{Robust equilibrium investment strategy}
This section adopts the game theoretic framework to formulate the robust time-inconsistent stochastic control problem \eqref{eq10} and attempts to derive the robust equilibrium investment strategy.

When performing a stochastic optimization, one typically uses a model which describes the dynamics of relevant quantities and then finds
the strategy which optimizes their subjective performance. Once the investment strategy $\bm{\pi}$ is identified as a control variable, the corresponding admissible control-measure strategy $(\bm{\pi},\bm{q})$ for the robust problem should also be determined. Following the work \cite{Pun2018, Pun2}, we can define the admissible control-measure strategy for \eqref{eq10} as follows.
\begin{defn}\label{definition3.1}
A control-measure strategy $(\bm{\pi},\bm{q})$ of  \eqref{eq10} is admissible if the following conditions are satisfied:
\begin{itemize}
\item[(i)] for any $t\in[0,T]$, $\bm{\pi}=\{u_{t}\}$ is $\mathcal{F}_{t}$-progressively measurable;
\item[(ii)] $\bm{q}$ satisfies the Novikov condition $\eqref{eq7}$;
\item[(iii)] for any initial point $(t, x)\in[0,T]\times R^{1+N}$ and $\bm{q}$, the SDE $\eqref{eq8}$ for $\{X_{s}\}_{s\in[t,T]}$ with $X_{t}=x$ admits a unique strong solution;
\item[(iv)] for any initial point $(t, x)\in[0,T]\times R^{1+N}$,
$$
\sup\limits_{\mathbb{Q}\in \mathcal{Q}}\mathbb{E}^{\mathbb{Q}}_{t,x}\left[\int^{T}_{t}\mid C(t,x,s,X_{s};\textbf{q})\mid \mathrm{d}s+\mid F(t,x,W_{T})\mid\right]<\infty,\quad \sup\limits_{\mathbb{Q}\in \mathcal{Q}}\mathbb{E}^{\mathbb{Q}}_{t,x}\left[|W_{T}|^{2}\right]<\infty
$$
and
$$
\sup\limits_{\mathbb{Q}\in \mathcal{Q}}\left[G\left(t,x,\mathbb{E}^{\mathbb{Q}}_{t,x}\left[W_{T}\right]\right)
  +K\left(t,x,\mathbb{E}^{\mathbb{Q}}_{t,x}\left[W_{T}\right],\mathbb{E}^{\mathbb{Q}}_{t,x}\left[(W_{T})^{2}\right]\right)\right]<\infty.
$$
\end{itemize}
\end{defn}

We denote by $\Pi \times \bm{Q}$ the set of admissible control-measure strategies. Following the idea of \cite{Pun2018}, we can view the robust time-inconsistent stochastic control problem $\eqref{eq10}$ over $[0,T]$ as a non-cooperative game with infinite investors indexed by $t\in[0,T]$, where investor $t$ can be regarded as the future incarnation of the investor as well as she/he can control $X$ at time $t$ by $\bm{\pi}$ and meanwhile plays another Stackelberg game against the "nature" who can control the measure through $\bm{q}$. Thus,  problem $\eqref{eq10}$ can be seen as "games in subgames". In this case, the control-measure strategy $(\bm{\pi},\bm{q})$ in Definition \ref{definition3.1} can be rewritten as
\begin{equation*}
  (\bm{\pi},\bm{q})=\left(\{u(t,x)'\}_{t\in[0,T]},
\{q^{S}(t,x,u(t,x))'\}_{t\in[0,T]},\{q^{Z}(t,x,u(t,x))'\}_{t\in[0,T]}\right)'.
\end{equation*}
It reflects that $\bm{q}$ is a function of $\bm{\pi}$ but not vice versa. Moreover, the time-consistent control-measure strategy $(\bm{\pi}^{\ast},\bm{q}^{\ast})$ is a subgame perfect Nash equilibrium, which means that for any $t\in[0,T)$, if investors $s\in(t,T]$ choose the optimal control-measure strategies $({\bm{\pi}_{s}}^{\ast},{\bm{q}_{s}}^{\ast})_{s\in(t,T]}$, respectively, then the investor $t$ would choose her/his optimal control-measure strategy $({\bm{\pi}_{t}}^{\ast},{\bm{q}_{t}}^{\ast})$, where
\begin{equation*}
  ({\bm{\pi}_{s}}^{\ast},{\bm{q}_{s}}^{\ast})=\left(u^{\ast}(s,X_{s})',
{q^{S}}^{\ast}(s,X_{s},u^{\ast}(s,X_{s}))',{q^{Z}}^{\ast}(s,X_{s},u^{\ast}(s,X_{s}))'\right)'
\end{equation*}
and
\begin{equation*}
 ({\bm{\pi}_{t}}^{\ast},{\bm{q}_{t}}^{\ast})=\left(u^{\ast}(t,X_{t})',
{q^{S}}^{\ast}(t,X_{t},u^{\ast}(t,X_{t}))',{q^{Z}}^{\ast}(t,X_{t},u^{\ast}(t,X_{t}))'\right)'.
\end{equation*}
Since the Lebesgue measure of one point $t$ is zero, the control $u^{\ast}(t,X_{t})$ has no impact. Therefore, we instead study the problem $\eqref{eq10}$ over $[t,t+\varepsilon)$ with a minimal time elapse $\varepsilon>0$ to get the robust optimal control given that investors $s\in[t+\varepsilon,T]$ have obtained their robust optimal controls (see, for example, \cite{Basak2010, Bjork2009, Bjork2014}).

Then, we follow \cite{Pun2018} to give the descriptions of the robust equilibrium investment strategy and equilibrium value function.
\begin{defn}\label{definition3.2}
Given an admissible control-measure strategy $(\bm{\pi}^{\ast},\bm{q}^{\ast})=(u^{\ast'},{q^{S}}^{\ast'},{q^{Z}}^{\ast'})'$. For any fixed state $(t,x)\in[0,T)\times R^{1+N}$ and a fixed real number $\varepsilon>0$, we define a strategy $(\bm{\pi}^{\varepsilon},\bm{q}^{\varepsilon})=(u^{\varepsilon'},q^{S,\varepsilon'},q^{Z,\varepsilon'})'$ as
\begin{align}\label{eq12}
\begin{cases}
u^{\varepsilon}(s,y)=u(s,y)\textbf{1}_{\{s\in[t,t+\varepsilon)\}}+u^{\ast}(s,y)\textbf{1}_{\{s\in[t+\varepsilon,T]\}},\\
q^{S,\varepsilon}(s,y,u^{\varepsilon})=q^{S}(s,y,u)\textbf{1}_{\{s\in[t,t+\varepsilon)\}}+{q^{S}}^{\ast}(s,y,u^{\ast})\textbf{1}_{\{s\in[t+\varepsilon,T]\}},\\
q^{Z,\varepsilon}(s,y,u^{\varepsilon})=q^{Z}(s,y,u)\textbf{1}_{\{s\in[t,t+\varepsilon)\}}+{q^{Z}}^{\ast}(s,y,u^{\ast})\textbf{1}_{\{s\in[t+\varepsilon,T]\}},
\end{cases}
\end{align}
where $y\in R^{1+N}$, $(\bm{\pi},\bm{q})=(u',q^{S'},q^{Z'})'$ is an arbitrary chosen admissible control-measure strategy and $\textbf{1}_{A}$ denotes the indicator function of set $A$. If
\begin{itemize}
\item[(i)] for any admissible $(\bm{\pi},\bm{q})$
$$
\underset{\varepsilon\rightarrow0}\liminf \frac{J(t,x;\bm{\pi}^{\varepsilon},\bm{q}^{\varepsilon})-J(t,x;\bm{\pi}^{\varepsilon},\bm{q}^{\ast})}{\varepsilon}\geq0;
$$
\item[(ii)] and for all $\bm{\pi}$
$$
\underset{\varepsilon\rightarrow0}\limsup \frac{J(t,x;\bm{\pi}^{\varepsilon},\bm{q}^{\ast})-J(t,x;\bm{\pi}^{\ast},\bm{q}^{\ast})}{\varepsilon}\leq0;
$$
\end{itemize}
then $(\bm{\pi}^{\ast},\bm{q}^{\ast})$ is called the equilibrium control-measure strategy, $\bm{\pi}^{\ast}$ is called the robust equilibrium investment strategy and the corresponding equilibrium value function can be defined as $V(t,x)=J(t,x;\bm{\pi}^{\ast},\bm{q}^{\ast})$.
\end{defn}

It follows from the above definition that our target is to solve the robust time-inconsistent stochastic control problem $\eqref{eq10}$ by finding the equilibrium control-measure strategy. Motivated by \cite{Pun2018, Pun2}, we can obtain a recursive equation for the objective function $J(t,x;\bm{\pi},\textbf{q})$ as follows.
\begin{lemma}\label{lemma2.1}
For any $s>t$ and arbitrary chosen admissible control-measure strategy $(\bm{\pi},\bm{q})\in\Pi \times \bm{Q}$, one has
\begin{equation}\label{eq13}
  J(t,x;\bm{\pi},\textbf{q})=\mathbb{E}^{\mathbb{Q}}_{t,x}\left[J(s,X_{s};\bm{\pi},\textbf{q})+\int^{s}_{t}C(\tau,X_{\tau},\tau,X_{\tau};\textbf{q})\mathrm{d}\tau\right]+
  L^{\bm{\pi},\textbf{q}}(t,x,s)
\end{equation}
with
$$
L^{\bm{\pi},\textbf{q}}(t,x,s)=L^{\bm{\pi},\textbf{q}}_{C}(t,x,s)+L^{\bm{\pi},\textbf{q}}_{H}(t,x,s)+L^{\bm{\pi},\textbf{q}}_{G}(t,x,s)
+L^{\bm{\pi},\textbf{q}}_{K}(t,x,s),
$$
where
\begin{align*}
L^{\bm{\pi},\textbf{q}}_{C}(t,x,s)=&\mathbb{E}^{\mathbb{Q}}_{t,x}\left[\int^{s}_{t}\left[C(t,x,\tau,X_{\tau};\textbf{q})
-C(\tau,X_{\tau},\tau,X_{\tau};\textbf{q})\right]\mathrm{d}\tau\right],\\
L^{\bm{\pi},\textbf{q}}_{H}(t,x,s)=&\mathbb{E}^{\mathbb{Q}}_{t,x}\left[h^{\bm{\pi},\textbf{q}}(s,X_{s},t,x)\right]
-\mathbb{E}^{\mathbb{Q}}_{t,x}\left[h^{\bm{\pi},\textbf{q}}(s,X_{s},s,X_{s})\right],\\
L^{\bm{\pi},\textbf{q}}_{G}(t,x,s)=&G\left(t,x,\mathbb{E}^{\mathbb{Q}}_{t,x}\left[g^{\bm{\pi},\textbf{q}}(s,X_{s})\right]\right)
-\mathbb{E}^{\mathbb{Q}}_{t,x}\left[G\left(s,X_{s},g^{\bm{\pi},\textbf{q}}(s,X_{s})\right)\right],\\
L^{\bm{\pi},\textbf{q}}_{K}(t,x,s)=&K\left(t,x,\mathbb{E}^{\mathbb{Q}}_{t,x}\left[g^{\bm{\pi},\textbf{q}}(s,X_{s})\right],\mathbb{E}^{\mathbb{Q}}_{t,x}\left[k^{\bm{\pi},\textbf{q}}(s,X_{s})\right]\right)
-\mathbb{E}^{\mathbb{Q}}_{t,x}\left[K\left(s,X_{s},g^{\bm{\pi},\textbf{q}}(s,X_{s}),k^{\bm{\pi},\textbf{q}}(s,X_{s})\right)\right],\\
h^{\bm{\pi},\textbf{q}}(t,x,s,y)=&\mathbb{E}^{\mathbb{Q}}_{t,x}\left[\int^{T}_{t}C(s,y,\tau,X_{\tau};\textbf{q})\mathrm{d}\tau+F(s,y,W_{T})\right],\\
g^{\bm{\pi},\textbf{q}}(t,x)=&\mathbb{E}^{\mathbb{Q}}_{t,x}\left[W_{T}\right],\;k^{\bm{\pi},\textbf{q}}(t,x)=\mathbb{E}^{\mathbb{Q}}_{t,x}\left[(W_{T})^{2}\right].
\end{align*}
\end{lemma}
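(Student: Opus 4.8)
The plan is to reduce the identity to the Markov property of $X$ under each admissible measure $\mathbb{Q}$ together with the law of iterated expectations. The starting point I would establish is a compact representation of the objective. Merging the divergence term $D_{t,x}(\mathbb{Q}\parallel\mathbb{P})=\mathbb{E}^{\mathbb{Q}}_{t,x}[\int_t^T C(t,x,\tau,X_\tau;\textbf{q})\mathrm{d}\tau]$ with the $F$-term and recalling the definitions of $g^{\bm{\pi},\textbf{q}}$, $k^{\bm{\pi},\textbf{q}}$ and $h^{\bm{\pi},\textbf{q}}$, the objective in $\eqref{eq10}$ can be written as
$$
J(t,x;\bm{\pi},\textbf{q}) = h^{\bm{\pi},\textbf{q}}(t,x,t,x) + G\!\left(t,x,g^{\bm{\pi},\textbf{q}}(t,x)\right) + K\!\left(t,x,g^{\bm{\pi},\textbf{q}}(t,x),k^{\bm{\pi},\textbf{q}}(t,x)\right),
$$
because $h^{\bm{\pi},\textbf{q}}(t,x,t,x)=\mathbb{E}^{\mathbb{Q}}_{t,x}[\int_t^T C(t,x,\tau,X_\tau;\textbf{q})\mathrm{d}\tau + F(t,x,W_T)]$ collects exactly the first and last summands of $\eqref{eq10}$. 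The same identity at the later time $s$ gives $J(s,X_s;\bm{\pi},\textbf{q})=h^{\bm{\pi},\textbf{q}}(s,X_s,s,X_s)+G(s,X_s,g(s,X_s))+K(s,X_s,g(s,X_s),k(s,X_s))$, so the whole proof becomes a matter of reducing each of the three summands of $J(t,x)$ to its counterpart inside $\mathbb{E}^{\mathbb{Q}}_{t,x}[J(s,X_s)]$ up to the correction terms $L_C,L_H,L_G,L_K$.

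For the $h$-summand I would split the inner time integral at $s$ and invoke the Markov property $\mathbb{E}^{\mathbb{Q}}_{t,x}[\,\cdot\mid\mathcal{F}_s]=\mathbb{E}^{\mathbb{Q}}_{s,X_s}[\,\cdot\,]$ to get
$$
h^{\bm{\pi},\textbf{q}}(t,x,t,x)=\mathbb{E}^{\mathbb{Q}}_{t,x}\!\left[\int_t^s C(t,x,\tau,X_\tau;\textbf{q})\mathrm{d}\tau\right]+\mathbb{E}^{\mathbb{Q}}_{t,x}\!\left[h^{\bm{\pi},\textbf{q}}(s,X_s,t,x)\right],
$$
where the second expectation appears because the frozen labels $(t,x)$ inside $C$ and $F$ survive the conditioning while the post-$s$ path is controlled by $X_s$. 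Adding and subtracting $\int_t^s C(\tau,X_\tau,\tau,X_\tau;\textbf{q})\mathrm{d}\tau$ inside the first expectation splits it into the running cost $\mathbb{E}^{\mathbb{Q}}_{t,x}[\int_t^s C(\tau,X_\tau,\tau,X_\tau;\textbf{q})\mathrm{d}\tau]$ plus $L_C^{\bm{\pi},\textbf{q}}(t,x,s)$; adding and subtracting $\mathbb{E}^{\mathbb{Q}}_{t,x}[h^{\bm{\pi},\textbf{q}}(s,X_s,s,X_s)]$ in the second expectation yields $L_H^{\bm{\pi},\textbf{q}}(t,x,s)$ plus the diagonal term $\mathbb{E}^{\mathbb{Q}}_{t,x}[h^{\bm{\pi},\textbf{q}}(s,X_s,s,X_s)]$.

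For the $G$- and $K$-summands the inputs I need are the tower identities $g^{\bm{\pi},\textbf{q}}(t,x)=\mathbb{E}^{\mathbb{Q}}_{t,x}[g^{\bm{\pi},\textbf{q}}(s,X_s)]$ and $k^{\bm{\pi},\textbf{q}}(t,x)=\mathbb{E}^{\mathbb{Q}}_{t,x}[k^{\bm{\pi},\textbf{q}}(s,X_s)]$, both immediate from $\mathbb{E}^{\mathbb{Q}}_{t,x}[W_T]=\mathbb{E}^{\mathbb{Q}}_{t,x}[\mathbb{E}^{\mathbb{Q}}_{s,X_s}[W_T]]$ and its analogue for $(W_T)^2$. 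Substituting these into $G$ and $K$ and then adding and subtracting $\mathbb{E}^{\mathbb{Q}}_{t,x}[G(s,X_s,g(s,X_s))]$ and $\mathbb{E}^{\mathbb{Q}}_{t,x}[K(s,X_s,g(s,X_s),k(s,X_s))]$ produces $L_G^{\bm{\pi},\textbf{q}}$ and $L_K^{\bm{\pi},\textbf{q}}$ plus these two diagonal expectations. Summing the three reduced summands, the diagonal $h$, $G$ and $K$ expectations recombine into $\mathbb{E}^{\mathbb{Q}}_{t,x}[J(s,X_s;\bm{\pi},\textbf{q})]$ via the representation of $J(s,X_s)$; the two diagonal $G$ and $K$ expectations introduced while handling the $h$-summand cancel against those introduced for the $G$- and $K$-summands, and what survives is exactly $\eqref{eq13}$.

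The hard part will be the rigorous application of the Markov property in the presence of the \emph{frozen} first pair of arguments: the functional inside $h^{\bm{\pi},\textbf{q}}(s,X_s,t,x)$ depends on $(t,x)$ as a fixed deterministic label and on the post-$s$ trajectory only through its initial value $X_s$, so one must confirm that conditioning on $\mathcal{F}_s$ genuinely replaces the initial data $(t,x)$ by $(s,X_s)$ in the expectation operator while leaving the labels untouched. This rests on the Markov/flow property of the unique strong solution of $\eqref{eq8}$ supplied by Definition \ref{definition3.1}(iii), and the interchanges of conditional expectation with the time integral (Fubini) and with $\mathbb{E}^{\mathbb{Q}}_{t,x}[\,\cdot\,]$ are justified precisely by the uniform integrability bounds of Definition \ref{definition3.1}(iv). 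Once these measure-theoretic points are secured, the remainder is the bookkeeping of the add-and-subtract steps above.
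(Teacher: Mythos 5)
Your proposal is correct and follows essentially the same route as the paper: the compact representation $J=h+G+K$, the tower/Markov identities for $g^{\bm{\pi},\textbf{q}}$ and $k^{\bm{\pi},\textbf{q}}$ together with the split of the time integral inside $h$, and the add-and-subtract bookkeeping that produces $L_{C},L_{H},L_{G},L_{K}$ (the paper merely delegates the $h$- and $G$-steps to the proof of Lemma 8 in Pun's extended version rather than writing them out). One small slip in your final assembly: no cancellation is needed or occurs --- handling the $h$-summand introduces no diagonal $G$- or $K$-expectations, so the three diagonal terms simply recombine with the running-cost integral into $\mathbb{E}^{\mathbb{Q}}_{t,x}\left[J(s,X_{s};\bm{\pi},\textbf{q})+\int^{s}_{t}C(\tau,X_{\tau},\tau,X_{\tau};\textbf{q})\mathrm{d}\tau\right]$ without any terms to cancel.
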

\begin{proof}
See Appendix A.
\end{proof}

It is worth mentioning that if either one of functions $F$, $G$, $K$ and $D_{t,x}$ is state-dependent, or one of $G(t,x,y)$ and $K(t,x,y,z)$ is nonlinear in $y$ and/or $z$, then $L^{\bm{\pi},\textbf{q}}$ will be generally nonzero and it quantifies the investor's incentives to deviate from the optimal investment strategy of time $t$ during the period $[t,s]$, which yields the violation of Bellman's dynamic programming principle. To further deal with the time inconsistency, we adopt the framework of game theory and assume that the equilibrium control-measure strategy for the problem $\eqref{eq10}$ exists and the corresponding equilibrium value function at time $t$ is denoted by $V(t,x)$. For any initial state $(t,x)$ and a real number $\varepsilon>0$, we consider the problem $\eqref{eq10}$ over $[t, t+\varepsilon)$ where the control-measure strategies over $[t+\varepsilon, T]$ are fixed as $(\bm{\pi}^{\ast},\bm{q}^{\ast})$ and the control-measure strategies over $[t, t+\varepsilon)$ are arbitrary, written as $(\bm{\pi},\bm{q})$. In view of the recursive equation $\eqref{eq13}$ for the objective function $J(t,x;\bm{\pi},\textbf{q})$ and Definition \ref{definition3.2}, we can derive the following extended dynamic programming equation (EDPE) for the value function $V$:
\begin{equation*}
  V(t,x)=\sup\limits_{\bm{\pi}\in \Pi}\inf\limits_{\textbf{q}\in\bm{Q}}\left\{\mathbb{E}^{\mathbb{Q}}_{t,x}\left[V(t+\varepsilon,X_{t+\varepsilon})+\int^{t+\varepsilon}_{t}C(\tau,X_{\tau},\tau,X_{\tau};\textbf{q})\mathrm{d}\tau\right]+
  L^{\bm{\pi}^{\varepsilon},\bm{q}^{\varepsilon}}(t,x,t+\varepsilon)\right\}
\end{equation*}
with
$$
L^{\bm{\pi}^{\varepsilon},\bm{q}^{\varepsilon}}(t,x,t+\varepsilon)=L^{\bm{\pi}^{\varepsilon},\bm{q}^{\varepsilon}}_{C}(t,x,t+\varepsilon)
+L^{\bm{\pi}^{\varepsilon},\bm{q}^{\varepsilon}}_{H}(t,x,t+\varepsilon)
+L^{\bm{\pi}^{\varepsilon},\bm{q}^{\varepsilon}}_{G}(t,x,t+\varepsilon)+L^{\bm{\pi}^{\varepsilon},\bm{q}^{\varepsilon}}_{K}(t,x,t+\varepsilon),
$$
where
\begin{align*}
&L^{\bm{\pi}^{\varepsilon},\bm{q}^{\varepsilon}}_{C}(t,x,t+\varepsilon)=\mathbb{E}^{\mathbb{Q}}_{t,x}\left[\int^{t+\varepsilon}_{t}\left[C(t,x,\tau,X_{\tau};\textbf{q})
-C(\tau,X_{\tau},\tau,X_{\tau};\textbf{q})\right]\mathrm{d}\tau\right],\\
&L^{\bm{\pi}^{\varepsilon},\bm{q}^{\varepsilon}}_{H}(t,x,t+\varepsilon)=\mathbb{E}^{\mathbb{Q}}_{t,x}\left[h^{\bm{\pi}^{\ast},\bm{q}^{\ast}}(t+\varepsilon,X_{t+\varepsilon},t,x)\right]
-\mathbb{E}^{\mathbb{Q}}_{t,x}\left[h^{\bm{\pi}^{\ast},\bm{q}^{\ast}}(t+\varepsilon,X_{t+\varepsilon},t+\varepsilon,X_{t+\varepsilon})\right],\\
&L^{\bm{\pi}^{\varepsilon},\bm{q}^{\varepsilon}}_{G}(t,x,t+\varepsilon)=G\left(t,x,\mathbb{E}^{\mathbb{Q}}_{t,x}
\left[g^{\bm{\pi}^{\ast},\bm{q}^{\ast}}(t+\varepsilon,X_{t+\varepsilon})\right]\right)
-\mathbb{E}^{\mathbb{Q}}_{t,x}\left[G\left(t+\varepsilon,X_{t+\varepsilon},g^{\bm{\pi}^{\ast},\bm{q}^{\ast}}(t+\varepsilon,X_{t+\varepsilon})\right)\right],\\
&L^{\bm{\pi}^{\varepsilon},\bm{q}^{\varepsilon}}_{K}(t,x,t+\varepsilon)=K\left(t,x,\mathbb{E}^{\mathbb{Q}}_{t,x}
\left[g^{\bm{\pi}^{\ast},\bm{q}^{\ast}}(t+\varepsilon,X_{t+\varepsilon})\right],\mathbb{E}^{\mathbb{Q}}_{t,x}\left[k^{\bm{\pi}^{\ast},\bm{q}^{\ast}}(t+\varepsilon,X_{t+\varepsilon})\right]\right)\\
&\quad\quad\quad\quad\quad\quad\quad\quad\quad
-\mathbb{E}^{\mathbb{Q}}_{t,x}\left[K\left(t+\varepsilon,X_{t+\varepsilon},g^{\bm{\pi}^{\ast},\bm{q}^{\ast}}(t+\varepsilon,X_{t+\varepsilon}),k^{\bm{\pi}^{\ast},\bm{q}^{\ast}}(t+\varepsilon,X_{t+\varepsilon})\right)\right],\\
&h^{\bm{\pi}^{\ast},\bm{q}^{\ast}}(t+\varepsilon,X_{t+\varepsilon},s,y)=\mathbb{E}^{\mathbb{Q}^{\ast}}_{t+\varepsilon,X_{t+\varepsilon}}\left[\int^{T}_{t+\varepsilon}C(s,y,\tau,X_{\tau};\textbf{q})\mathrm{d}\tau+F(s,y,W_{T})\right],\\
&g^{\bm{\pi}^{\ast},\bm{q}^{\ast}}(t+\varepsilon,X_{t+\varepsilon})=\mathbb{E}^{\mathbb{Q}^{\ast}}_{t+\varepsilon,X_{t+\varepsilon}}\left[W_{T}\right],\;
k^{\bm{\pi}^{\ast},\bm{q}^{\ast}}(t+\varepsilon,X_{t+\varepsilon})=\mathbb{E}^{\mathbb{Q}^{\ast}}_{t+\varepsilon,X_{t+\varepsilon}}\left[(W_{T})^{2}\right].
\end{align*}

Define
\begin{align}\label{eq14}
  \mathcal{A^{\bm{\pi},\textbf{q}}}=&\frac{\partial}{\partial t}+\sum\limits_{i=1}^{1+N}\Gamma_{i}(t,x,\bm{\pi},\textbf{q})\frac{\partial}{\partial x^{i}}
  +\frac{1}{2}\sum\limits_{i,j=1}^{1+N}\Omega_{i,j}(t,x,\bm{\pi},\textbf{q})\frac{\partial^{2}}{\partial x^{i}\partial x^{j}}\nonumber\\
  =&\frac{\partial}{\partial t}+\Gamma(t,x,\bm{\pi},\textbf{q})\frac{\partial}{\partial x}+\frac{1}{2}\Omega(t,x,\bm{\pi},\textbf{q})\cdot\frac{\partial^{2}}{\partial x\partial x'},
\end{align}
where $x^{i}$ is the $i$th element of $x$. In the sequel, we omit the superscripts of $h^{\bm{\pi},\textbf{q}}$, $g^{\bm{\pi},\textbf{q}}$ and $k^{\bm{\pi},\textbf{q}}$ for convenience. Let $w=W_{t}$ and
\begin{align}\label{eq15}
\mathcal{L}(t,x,\bm{\pi},\textbf{q},h,g,k)=&\left.\mathcal{A^{\bm{\pi},\textbf{q}}}h(t,x,s,y)\right|_{s=t,y=x}-\mathcal{A^{\bm{\pi},\textbf{q}}}h(t,x,t,x)
+\frac{\partial G(t,x,y)}{\partial y}\bigg|_{y=g(t,x)}\mathcal{A^{\bm{\pi},\textbf{q}}}g(t,x)\nonumber\\
&-\mathcal{A^{\bm{\pi},\textbf{q}}}G(t,x,g(t,x))+\frac{\partial K(t,x,y,z)}{\partial y}\bigg|_{y=g(t,x),z=k(t,x)}\mathcal{A^{\bm{\pi},\textbf{q}}}g(t,x)\nonumber\\
&+\frac{\partial K(t,x,y,z)}{\partial z}\bigg|_{y=g(t,x),z=k(t,x)}\mathcal{A^{\bm{\pi},\textbf{q}}}k(t,x)-\mathcal{A^{\bm{\pi},\textbf{q}}}K(t,x,g(t,x),k(t,x)).
\end{align}
Similar to the investigations in \cite{Pun2}, we can use the EDPE for the value function $V$, Definition \ref{definition3.2} and equations $\eqref{eq14}$ and $\eqref{eq15}$ to give the following characterizations of the HJBI system.
\begin{defn}\label{definition3.3}
The system of extended HJBI equations for $V$, $h$, $g$ and $k$ are defined as follows:
\begin{itemize}
\item[(i)] The function $V(t,x)$ satisfies the HJBI equation
\begin{equation}\label{eq16}
  \sup\limits_{\bm{\pi}\in \Pi}\inf\limits_{\textbf{q}\in\bm{Q}}\left(\mathcal{A^{\bm{\pi},\textbf{q}}}V(t,x)+C(t,x,t,x;\textbf{q})
  +\mathcal{L}(t,x,\bm{\pi},\textbf{q},h,g,k)\right)=0,\quad 0\leq t <T
\end{equation}
with the boundary condition $V(T,x)=F(T,x,w)+G(T,x,w)+K(T,x,w,w^{2})$, where $\mathcal{A^{\bm{\pi},\textbf{q}}}$ and $\mathcal{L}$ are given by $\eqref{eq14}$ and $\eqref{eq15}$, respectively.
\item[(ii)] For any $(s,y)\in[0,T]\times R^{1+N}$, the function $h(t,x,s,y)$ solves the partial differential equation (PDE)
\begin{equation}\label{eq17}
\mathcal{A^{\bm{\pi}^{*},\textbf{q}^{*}}}h(t,x,s,y)+C(s,y,t,x;\textbf{q}^{*})=0, \quad 0\leq t <T
\end{equation}
with $h(T,x,s,y)=F(s,y,w)$, where $(\bm{\pi}^{*}, \textbf{q}^{*})$ realizes the supremum and infimum in equation $\eqref{eq16}$.
\item[(iii)] The function $g(t,x)$ solves the PDE
\begin{equation}\label{eq18}
  \mathcal{A^{\bm{\pi}^{*},\textbf{q}^{*}}}g(t,x)=0, \quad 0\leq t <T
\end{equation}
with $g(T,x)=w$.
\item[(iv)] The function $k(t,x)$ solves the PDE
\begin{equation}\label{eq19}
   \mathcal{A^{\bm{\pi}^{*},\textbf{q}^{*}}}k(t,x)=0, \quad 0\leq t <T
\end{equation}
with $k(T,x)=w^{2}$.
\end{itemize}
\end {defn}

Note that the HJBI equations $\eqref{eq16}$-$\eqref{eq19}$ need to be solved simultaneously. Next, we have an verification theorem to show that the solution to the HJBI system equals to the equilibrium value function and the corresponding investment strategy is the robust equilibrium investment strategy.
\begin{thm}\label{thm3.1}
Assume that $V(t,x),h(t,x,s,y),g(t,x)$ and $k(t,x)$ solve equations $\eqref{eq16}$-$\eqref{eq19}$ in Definition \ref{definition3.3}, $V,g,k\in\mathbb{C}^{1,2}([0,T]\times R^{1+N})$, $h\in\mathbb{C}^{1,2,1,2}([0,T]\times R^{1+N}\times [0,T]\times R^{1+N})$,
$G\in\mathbb{C}^{1,2,2}([0,T]\times R^{1+N}\times R)$ and $K\in\mathbb{C}^{1,2,2,2}([0,T]\times R^{1+N}\times R\times R)$.
Moreover, we suppose that there exists an admissible control-measure strategy $(\bm{\pi}^{*}, \textbf{q}^{*})$, which realizes the supremum and infimum in the equation $\eqref{eq16}$
\begin{align*}
&\mathcal{A^{\bm{\pi}^{*},\textbf{q}^{*}}}V(t,x)+C(t,x,t,x;\textbf{q}^{*})
  +\mathcal{L}(t,x,\bm{\pi}^{*},\textbf{q}^{*},h,g,k)\\
=&\sup\limits_{\bm{\pi}\in \Pi}\left\{\mathcal{A^{\bm{\pi},\textbf{q}^{*}(\bm{\pi})}}V(t,x)+C(t,x,t,x;\textbf{q}^{*}(\bm{\pi}))
  +\mathcal{L}(t,x,\bm{\pi},\textbf{q}^{*}(\bm{\pi}),h,g,k)\right\}\\
=&\sup\limits_{\bm{\pi}\in \Pi}\inf\limits_{\textbf{q}\in\bm{Q}}\left\{\mathcal{A^{\bm{\pi},\textbf{q}}}V(t,x)+C(t,x,t,x;\textbf{q})
  +\mathcal{L}(t,x,\bm{\pi},\textbf{q},h,g,k)\right\}
\end{align*}
for any $(t,x)$. Then $(\bm{\pi}^{*}, \textbf{q}^{*})$ is the equilibrium control-measure strategy, $\bm{\pi}^{*}$ is the robust equilibrium investment strategy and $V$ is the corresponding equilibrium value function as described in Definition \ref{definition3.2}.
\end{thm}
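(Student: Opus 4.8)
The plan is to establish the two equilibrium inequalities of Definition \ref{definition3.2} together with the identity $V(t,x)=J(t,x;\bm{\pi}^{*},\bm{q}^{*})$, taking the recursive representation of Lemma \ref{lemma2.1} as the starting point. The organizing principle is that the correction operator $\mathcal{L}$ in \eqref{eq15} is exactly the infinitesimal (i.e. $s$-derivative at $s=t$) version of the non-local remainder $L^{\bm{\pi},\bm{q}}(t,x,s)$ appearing in \eqref{eq13}, so that the non-standard total-versus-frozen derivative discrepancies produced by the state-dependence and nonlinearity of $F,G,K$ are precisely absorbed into the HJBI equation \eqref{eq16}.

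First I would record the probabilistic meaning of $h,g,k$. Since $(\bm{\pi}^{*},\bm{q}^{*})$ is admissible, \eqref{eq17}--\eqref{eq19} are linear Cauchy problems for the generator $\mathcal{A}^{\bm{\pi}^{*},\bm{q}^{*}}$; applying the Feynman--Kac formula, whose stochastic-integral terms are true martingales because of the integrability built into Definition \ref{definition3.1}, gives $g(t,x)=\mathbb{E}^{\mathbb{Q}^{*}}_{t,x}[W_{T}]$, $k(t,x)=\mathbb{E}^{\mathbb{Q}^{*}}_{t,x}[(W_{T})^{2}]$ and $h(t,x,s,y)=\mathbb{E}^{\mathbb{Q}^{*}}_{t,x}[\int_{t}^{T}C(s,y,\tau,X_{\tau};\bm{q}^{*})\,\mathrm{d}\tau+F(s,y,W_{T})]$. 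Thus the PDE solutions coincide with the functionals $g^{\bm{\pi}^{*},\bm{q}^{*}},k^{\bm{\pi}^{*},\bm{q}^{*}},h^{\bm{\pi}^{*},\bm{q}^{*}}$ of Lemma \ref{lemma2.1}, and comparing with \eqref{eq10} shows $J(t,x;\bm{\pi}^{*},\bm{q}^{*})=h(t,x,t,x)+G(t,x,g(t,x))+K(t,x,g(t,x),k(t,x))=:\widetilde{V}(t,x)$.

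Next I would prove $\widetilde{V}=V$. Both carry the terminal value $F(T,x,w)+G(T,x,w)+K(T,x,w,w^{2})$. Differentiating the diagonal compositions defining $\widetilde{V}$ by the chain rule and substituting \eqref{eq17}--\eqref{eq19} shows term by term that $\mathcal{A}^{\bm{\pi}^{*},\bm{q}^{*}}\widetilde{V}=-C(t,x,t,x;\bm{q}^{*})-\mathcal{L}(t,x,\bm{\pi}^{*},\bm{q}^{*},h,g,k)$, since the seven pieces of \eqref{eq15} are precisely the differences between the total derivatives $\mathcal{A}^{\bm{\pi}^{*},\bm{q}^{*}}[h(t,x,t,x)]$, $\mathcal{A}^{\bm{\pi}^{*},\bm{q}^{*}}[G(t,x,g)]$, $\mathcal{A}^{\bm{\pi}^{*},\bm{q}^{*}}[K(t,x,g,k)]$ and their frozen counterparts. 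Hence $\widetilde{V}$ solves the linear Cauchy problem $\mathcal{A}^{\bm{\pi}^{*},\bm{q}^{*}}u+C(t,x,t,x;\bm{q}^{*})+\mathcal{L}(t,x,\bm{\pi}^{*},\bm{q}^{*},h,g,k)=0$ with that terminal data, which is exactly \eqref{eq16} evaluated at the optimizer $(\bm{\pi}^{*},\bm{q}^{*})$ supplied by the hypothesis; uniqueness for this Cauchy problem then forces $V=\widetilde{V}=J(\cdot;\bm{\pi}^{*},\bm{q}^{*})$.

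Finally I would verify (i)--(ii). Applying \eqref{eq13} with $s=t+\varepsilon$ to the spliced strategies of \eqref{eq12} and using that on $[t+\varepsilon,T]$ they coincide with the equilibrium, so that $J(t+\varepsilon,X_{t+\varepsilon};\cdot)=V(t+\varepsilon,X_{t+\varepsilon})$, I would expand $\mathbb{E}^{\mathbb{Q}}_{t,x}[V(t+\varepsilon,X_{t+\varepsilon})]-V(t,x)$ by Dynkin's formula. The decisive analytic step is $\lim_{\varepsilon\downarrow0}L^{\bm{\pi}^{\varepsilon},\bm{q}^{*}}(t,x,t+\varepsilon)/\varepsilon=\mathcal{L}(t,x,\bm{\pi},\bm{q}^{*},h,g,k)$, obtained by differentiating $L_{C},L_{H},L_{G},L_{K}$ at $s=t$; together with the explicit running cost this gives
$$\lim_{\varepsilon\downarrow0}\frac{J(t,x;\bm{\pi}^{\varepsilon},\bm{q}^{*})-V(t,x)}{\varepsilon}=\mathcal{A}^{\bm{\pi},\bm{q}^{*}}V(t,x)+C(t,x,t,x;\bm{q}^{*})+\mathcal{L}(t,x,\bm{\pi},\bm{q}^{*},h,g,k),$$
which is $\le 0$ by the $\sup$-characterization assumed in the statement (the supremum being attained and equal to zero at $\bm{\pi}^{*}$), yielding (ii); the analogous computation holding the investment at $\bm{\pi}^{\varepsilon}$ while comparing the measures $\bm{q}^{\varepsilon}$ and $\bm{q}^{*}$, combined with the $\inf$-characterization, yields (i). The main obstacle will be the rigorous justification of this last limit: one must differentiate the \emph{non-local} functionals $L_{H},L_{G},L_{K}$ under the conditional expectation and interchange the limit, the generator and $\mathbb{E}^{\mathbb{Q}}_{t,x}$, which rests on the $\mathbb{C}^{1,2}$ and $\mathbb{C}^{1,2,1,2}$ regularity of $V,h,g,k$ (and the smoothness of $G,K$) together with the uniform-integrability bounds of Definition \ref{definition3.1}, with additional care demanded by the frozen variables $(s,y)$ in $h$ being set to the running $(t,x)$.
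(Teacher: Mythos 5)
Your proposal is correct and follows essentially the same route as the paper's proof: the Feynman--Kac identification of $h,g,k$ with the conditional expectations under $\mathbb{Q}^{*}$, the identity $V(t,x)=J(t,x;\bm{\pi}^{*},\bm{q}^{*})$ obtained by evaluating the HJBI equation at the optimizer and exploiting the cancellation of $\mathcal{L}$ against the total derivatives of $h(t,x,t,x)$, $G$ and $K$, and then the $\varepsilon$-perturbation limits for the two inequalities of Definition \ref{definition3.2} (which the paper itself delegates to the proof of Theorem 10 in \cite{Pun2}, so your sketch of that step is in fact more explicit than the paper's). The only cosmetic difference is in the first step, where you argue that $h(t,x,t,x)+G(t,x,g(t,x))+K(t,x,g(t,x),k(t,x))$ and $V$ solve the same linear Cauchy problem and invoke uniqueness, whereas the paper integrates $\mathcal{A}^{\bm{\pi}^{*},\bm{q}^{*}}V$ over $[t,T]$ via Dynkin's formula to reach the same identity directly.
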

\begin{proof}
See Appendix B.
\end{proof}

\begin{remark}\label{remark2}
Instead of following \cite{Bjork2009, Pun2018} to assume that
$$\frac{\partial \phi}{\partial t},\;\Gamma(t,x,\bm{\pi},\textbf{q})\frac{\partial \phi}{\partial x},\;\Omega(t,x,\bm{\pi},\textbf{q})\cdot\frac{\partial^{2} \phi}{\partial x\partial x'}\in\mathbb{L}_{\mathcal{F}}^{1}(0,T;R,\mathbb{Q})\quad and \quad\Psi(t,x,\bm{\pi},\textbf{q})'\frac{\partial \phi}{\partial x}\in\mathbb{L}_{\mathcal{F}}^{2}(0,T;R^{1+N},\mathbb{Q})$$
for $\phi(t,x)\in\{V(t,x),h(t,x,\cdot,\cdot),g(t,x),k(t,x),G(t,x,g(t,x)),K(t,x,g(t,x),k(t,x))\}$ and any admissible control-measure strategy $(\bm{\pi},\bm{q})$ as well as the corresponding measure $\mathbb{Q}$, we give the assumption of the smoothness of these functions. This is because the above integrability conditions are used to make the stochastic integral valid when computing the expectation of these functions, the result can be also obtained by the Dynkin's Theorem under the condition of the smoothness of these functions. Similar studies with assumptions of smoothness can be also found in \cite{LiY2013Optimal, Wang2021, Zeng2016} and so on.
\end{remark}

\section{Robust equilibrium investment strategy in semi-closed form under a special case}
Although Theorem \ref{thm3.1} presented in Section 3 provide us a method to determine the robust equilibrium investment strategy, it is difficult to obtain the (semi-)analytical solutions of model $\eqref{eq10}$ due to its complexity.  On the other hand, researchers and practitioners prefer the (semi-)analytical solutions because they are more applicable in practice.

In order to derive the (semi-)analytical solutions of model $\eqref{eq10}$, we consider a special investment problem over a time horizon $[0, T]$ under state-dependent risk aversion and skewness preference in this section. More specially, the objective function in $\eqref{eq10}$ is specified as
\begin{equation}\label{eq20}
  \sup\limits_{\bm{\pi}\in \Pi}\inf\limits_{\textbf{q}\in\bm{Q}}\; \left\{\mathbb{E}^{\mathbb{Q}}_{t,x}\left[W_{T}\right]-\frac{\gamma(W_{t})}{2}Var^{\mathbb{Q}}_{t,x}\left[W_{T}\right]
  +\frac{\phi(W_{t})}{3}Skew^{\mathbb{Q}}_{t,x}\left[W_{T}\right]+\mathbb{E}^{\mathbb{Q}}_{t,x}
\left[\int^{T}_{t}\frac{\textbf{q}'_{s}\Xi(x)^{-1}\textbf{q}_{s}}{2\Phi(s,X_{s}) }\mathrm{d}s\right]\right\}
\end{equation}
with $\gamma(W_{t})=\frac{\gamma_{0}}{W_{t}}$ and $\phi(W_{t})=\frac{\phi_{0}}{W_{t}^{2}}$, where $\gamma_{0}$ and $\phi_{0}$ are positive constants representing the risk aversion coefficient and the skewness preference parameter, respectively. First, compared with $\gamma(W_{t})$, the skewness preference $\phi(W_{t})$ gradually dominates the risk aversion when the investor's wealth decreases, which is in accordance with the existing literature that the heaviest lottery players are poor \cite{Kumar2009}. Second, these expressions for $\gamma(W_{t})$ and $\phi(W_{t})$ imply a linear equilibrium investment strategy as shown later, which reflects the amount to invest in the risky asset is proportional to the current wealth value. If for some period there exists a negative $W_{t}$, the investor goes bankrupt and investment is allowed to continue. In fact, when $W_{t}$ takes negative values, the maximization operation of outer layer in the equation \eqref{eq20} becomes  unreasonable since the investor by no means wants to maximizing the variance of the portfolio. In order to be consistent with the previous assumptions in Section 3, $W_{t}>0$ is now assumed, which will be verified later for the robust equilibrium investment strategy.  Assume that  $r(t,Z_{t})\equiv r(t)$, $\mu_{i}(t,Z_{t})=\mu_{i}(t)$ and $\sigma_{i}(t,Z_{t})=\sigma_{i}(t)$ in equations $\eqref{eq1}$ and $\eqref{eq2}$ without regard to the state variable $Z_{t}$ in the financial market. Let $\beta_{t}=\mu_{t}-r_{t} \textbf{1}$, $\Sigma_{t}=\sigma'_{t}\sigma_{t}$ and $\Theta_{t}=\beta'_{t}\Sigma_{t}^{-1}\beta_{t}$. In addition, we assume that $r_{t}$, $\beta_{t}$, $\sigma_{t}$ and $\Theta_{t}$ are deterministic, Lipschitz continuous and bounded functions in $t$ over $[0,T]$.

It follows that the general model $\eqref{eq10}$ taking the form $\eqref{eq11}$ can be degenerated into model $\eqref{eq20}$ by setting $X=W$, which means that there is only one state variable $W$ in this setting. Then, we can use the discussions in Section 3 to obtain the corresponding robust equilibrium investment strategy. The operator $\mathcal{A^{\bm{\pi},\textbf{q}}}$ is specified as
\begin{equation}\label{eq21}
  \mathcal{A^{\bm{\pi},\textbf{q}}}=\frac{\partial}{\partial t}+\Gamma(t,w,\bm{\pi},\textbf{q})\frac{\partial}{\partial w}+\Omega(t,w,\bm{\pi},\textbf{q})\frac{\partial^{2}}{\partial w^{2}}
\end{equation}
with $\Gamma(t,w,\bm{\pi},\textbf{q})=\left[r_{t}w+u_{t}'\beta_{t}+u_{t}'\sigma_{t}q^{S}_{t}\right]$ and $\Omega(t,w,\bm{\pi},\textbf{q})=\frac{1}{2}u_{t}'\Sigma_{t}u_{t}$. Note that functions $F$, $G$ and $H$ in the equation $\eqref{eq11}$ under our setting are independent of $t$ and can be specified as
\begin{equation}\label{eq22}
  F(w,y)=y-\frac{\gamma(w)}{2}y^{2}+\frac{\phi(w)}{3}y^{3},\quad G(w,g)=\frac{\gamma(w)}{2}g^{2}+\frac{2\phi(w)}{3}g^{3},\quad K(w,g,h)=-\phi(w)gh,
\end{equation}
respectively. Functions $h$ is specified as $h(t,w,y)$. Note that $C(t,w,s,W_{s};\textbf{q})=\frac{\textbf{q}'_{s}\Xi(w)^{-1}\textbf{q}_{s}}{2\Phi(s,W_{s}) }$. For simplicity, we assume that $\Xi(w)=\xi I_{M\times M}$ with $\xi>0$.

For the above special case, we rewrite the descriptions of the admissible control-measure strategy, which are more specific and easier to verify than those in Definition \ref{definition3.1}.
\begin{defn}\label{definition4.1}
A control-measure strategy $(\bm{\pi},\bm{q})$ is admissible if the following conditions are satisfied:
\begin{itemize}
\item[(i)] for any $t\in[0,T]$, $\bm{\pi}=\{u_{t}\}$ is $\mathcal{F}_{t}$-progressively measurable;
\item[(ii)] $\bm{q}$ satisfies $\mathbb{E}^{\mathbb{P}}\left[\exp\left(8\int^{T}_{0}\|q^{S}_{s}\|^{2}\mathrm{d}s\right)\right]<\infty$;
\item[(iii)] for any initial point $(t, w)\in[0,T]\times R$ and $\bm{q}$, the following SDE
$$
\mathrm{d}W_{s}=\left[r_{s}W_{s}+u_{s}'\beta_{s}+u_{s}'\sigma_{s}q^{S}_{s}\right]\mathrm{d}s+u_{s}'\sigma_{s}\mathrm{d}W^{S,\mathbb{Q}}_{s}
$$
for $\{W_{s}\}_{s\in[t,T]}$ with $W_{t}=w$ admits a unique strong solution;
\item[(iv)] for each $\bm{q}$ satisfying $(ii)$ and the corresponding measure $\mathbb{Q}$, one has $\mathbb{E}^{\mathbb{Q}}\left[\left(\int^{T}_{0}\|u_{s}\|^{2}\mathrm{d}s\right)^{4}\right]<\infty$;
\item[(v)] for each $\bm{q}$ satisfying $(ii)$ and the corresponding measure $\mathbb{Q}$, one has
$\mathbb{E}^{\mathbb{Q}}\left[\sup\limits_{t\in[0,T]}\Phi^{-2}(t,W_{t}) \right]<\infty.$
\end{itemize}
\end{defn}

We denote by $\Pi_{1} \times \bm{Q}_{1}$ the set of admissible control-measure strategies as described in Definition \ref{definition4.1}. On the basis of Definition \ref{definition4.1}, we have the following lemma, which can be used to show the well-posedness of our problem $\eqref{eq20}$ later.
\begin{lemma}\label{lemma4.1}
With any admissible control-measure strategy $(\bm{\pi},\bm{q})\in\Pi_{1} \times \bm{Q}_{1}$ and the corresponding measure $\mathbb{Q}$, the wealth process $W_{t}$ satisfies $\mathbb{E}^{\mathbb{Q}}\left[\sup\limits_{t\in[0,T]}\mid W_{t}\mid^{4}\right]<\infty$.
\end{lemma}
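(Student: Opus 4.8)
The plan is to solve the wealth SDE explicitly, reduce the fourth moment of its running maximum to three pieces, and then observe that two of them are controlled directly by condition (iv) of Definition \ref{definition4.1} while the third reduces to a single measure-change estimate that is exactly what the exponential integrability in condition (ii) is designed to handle.

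First I would use variation of parameters. Since the diffusion coefficient $u_s'\sigma_s$ does not involve $W$ and the drift is linear in $W$ with bounded deterministic coefficient $r_s$, the unique strong solution guaranteed by Definition \ref{definition4.1}(iii) is
$$W_t = e^{\int_0^t r_u\,\mathrm{d}u}\left(w + \int_0^t e^{-\int_0^s r_u\,\mathrm{d}u}\big(u_s'\beta_s + u_s'\sigma_s q_s^S\big)\,\mathrm{d}s + \int_0^t e^{-\int_0^s r_u\,\mathrm{d}u}u_s'\sigma_s\,\mathrm{d}W_s^{S,\mathbb{Q}}\right).$$
Because $r$ is bounded on $[0,T]$, the exponential factors lie between two positive constants, so using $(a_1+a_2+a_3+a_4)^4\le 64\sum_{i=1}^4 a_i^4$ one gets
$$\mathbb{E}^{\mathbb{Q}}\Big[\sup_{t\in[0,T]}|W_t|^4\Big]\le C\Big(|w|^4 + \mathbb{E}^{\mathbb{Q}}\big[I_\beta^4\big] + \mathbb{E}^{\mathbb{Q}}\big[I_q^4\big] + \mathbb{E}^{\mathbb{Q}}\big[\sup_t|M_t|^4\big]\Big),$$
where $I_\beta=\int_0^T|u_s'\beta_s|\,\mathrm{d}s$, $I_q=\int_0^T|u_s'\sigma_s q_s^S|\,\mathrm{d}s$ and $M_t=\int_0^t e^{-\int_0^s r_u\,\mathrm{d}u}u_s'\sigma_s\,\mathrm{d}W_s^{S,\mathbb{Q}}$.

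Next I would dispose of the two easy terms. By Cauchy--Schwarz in time and boundedness of $\beta$, $I_\beta^4\le C(\int_0^T\|u_s\|^2\,\mathrm{d}s)^2$, whose $\mathbb{Q}$-expectation is finite by (iv). For the $\mathbb{Q}$-(local) martingale $M$, the Burkholder--Davis--Gundy inequality together with boundedness of $r,\sigma$ gives $\mathbb{E}^{\mathbb{Q}}[\sup_t|M_t|^4]\le C\,\mathbb{E}^{\mathbb{Q}}[\langle M\rangle_T^2]\le C\,\mathbb{E}^{\mathbb{Q}}[(\int_0^T\|u_s\|^2\,\mathrm{d}s)^2]<\infty$, again by (iv). For the remaining term, Cauchy--Schwarz in time followed by Cauchy--Schwarz in probability, plus boundedness of $\sigma$, yield
$$\mathbb{E}^{\mathbb{Q}}[I_q^4]\le C\,\Big(\mathbb{E}^{\mathbb{Q}}\big[(\textstyle\int_0^T\|u_s\|^2\,\mathrm{d}s)^4\big]\Big)^{1/2}\Big(\mathbb{E}^{\mathbb{Q}}[A^4]\Big)^{1/2},\qquad A:=\int_0^T\|q_s^S\|^2\,\mathrm{d}s,$$
so everything reduces to proving $\mathbb{E}^{\mathbb{Q}}[A^4]<\infty$.

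This last estimate is the main obstacle, and it is where condition (ii) enters. Writing $\mathcal{E}_T:=\varepsilon_T(q^S\cdot W^S)=\mathrm{d}\mathbb{Q}/\mathrm{d}\mathbb{P}|_{\mathcal{F}_T}$ and $N_t:=\int_0^t(q_s^S)'\,\mathrm{d}W_s^S$ (so that $\langle N\rangle_T=A$), I would change measure and apply Cauchy--Schwarz,
$$\mathbb{E}^{\mathbb{Q}}[A^4]=\mathbb{E}^{\mathbb{P}}[\mathcal{E}_T A^4]\le\big(\mathbb{E}^{\mathbb{P}}[\mathcal{E}_T^2]\big)^{1/2}\big(\mathbb{E}^{\mathbb{P}}[A^8]\big)^{1/2}.$$
The factor $\mathbb{E}^{\mathbb{P}}[A^8]$ is finite since $A^8\le(8!/8^8)e^{8A}$ and $\mathbb{E}^{\mathbb{P}}[e^{8A}]<\infty$ by (ii). The delicate factor is $\mathbb{E}^{\mathbb{P}}[\mathcal{E}_T^2]$: a naive repeated Cauchy--Schwarz keeps doubling the martingale coefficient and the required exponential order, and never closes. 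The clean device is the half-power identity $\mathcal{E}_T^2=\exp(2N_T-A)=\big(\varepsilon_T(4q^S\cdot W^S)\big)^{1/2}\exp(3A)$, which one checks directly; after this, a single Cauchy--Schwarz together with the fact that $\varepsilon_T(4q^S\cdot W^S)$ is a nonnegative $\mathbb{P}$-supermartingale (hence of expectation at most $1$) gives
$$\mathbb{E}^{\mathbb{P}}[\mathcal{E}_T^2]\le\big(\mathbb{E}^{\mathbb{P}}[\varepsilon_T(4q^S\cdot W^S)]\big)^{1/2}\big(\mathbb{E}^{\mathbb{P}}[e^{6A}]\big)^{1/2}\le\big(\mathbb{E}^{\mathbb{P}}[e^{6A}]\big)^{1/2}.$$
Since $A\ge 0$ and $6\le 8$, we have $\mathbb{E}^{\mathbb{P}}[e^{6A}]\le\mathbb{E}^{\mathbb{P}}[e^{8A}]<\infty$ by (ii), so $\mathbb{E}^{\mathbb{P}}[\mathcal{E}_T^2]<\infty$, hence $\mathbb{E}^{\mathbb{Q}}[A^4]<\infty$, and the lemma follows. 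The only genuinely nontrivial point is this supermartingale-based control of the $L^2(\mathbb{P})$-norm of the density, which is precisely what makes the seemingly large exponent $8$ in condition (ii) the natural assumption.
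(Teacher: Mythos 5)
Your proposal is correct and follows essentially the same route as the paper's own proof: decompose $\sup_t|W_t|^4$ into the drift, cross, and stochastic-integral pieces, settle the $u$-terms with condition (iv) and a maximal inequality, and reduce the $q^S$-term to $\mathbb{E}^{\mathbb{Q}}[(\int_0^T\|q^S_s\|^2\mathrm{d}s)^4]<\infty$ via the identity $\mathcal{E}_T^2=\bigl(\varepsilon_T(4q^S\cdot W^S)\bigr)^{1/2}e^{3A}$, which is exactly the device behind the paper's introduction of the new measure with density $\varepsilon_T(4q^S\cdot W^S)$ and its factor $\bigl(\mathbb{E}^{\mathbb{P}}[e^{6A}]\bigr)^{1/4}$. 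The only cosmetic differences are that you invoke the supermartingale property of the stochastic exponential where the paper invokes Novikov for $4q^S$, and BDG where the paper cites Doob plus H\"older.
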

\begin{proof}
See Appendix C.
\end{proof}

Now, on the basis of Lemma \ref{lemma4.1}, we compare the conditions between Definition \ref{definition3.1} and Definition \ref{definition4.1}.
\begin{remark}\label{remark3}
It is easy to see that the condition $(ii)$ in Definition \ref{definition4.1} implies the condition $(ii)$ in Definition \ref{definition3.1} and the SDE in the condition $(iii)$ in Definition \ref{definition4.1} is a specific form of that in Definition \ref{definition3.1} under our special case. It follows from Lemma \ref{lemma4.1} that
$\mathbb{E}^{\mathbb{Q}}\left[\sup\limits_{t\in[0,T]}\mid W_{t}\mid^{4}\right]<\infty$.
Thus, by the condition $(v)$ in Definition \ref{definition4.1} and the proof of Lemma \ref{lemma4.1}, one has
\begin{align*}
\mathbb{E}^{\mathbb{Q}}_{t,w}\left[\int^{T}_{t}\mid C(w,s,W_{s};\textbf{q})\mid \mathrm{d}s\right]\leq  L\left(\mathbb{E}^{\mathbb{Q}}_{t,w}\left[\sup\limits_{t\in[t,T]}\frac{1}{\Phi^{2}(s,W_{s}) }\right]\right)^{\frac{1}{2}}
\left(\mathbb{E}^{\mathbb{Q}}_{t,w}\left[\left(\int^{T}_{t}\|q^{S}_{s}\|^{2}\mathrm{d}s\right)^{2}\right]\right)^{\frac{1}{2}}<\infty,
\end{align*}
where $L$ is a positive constant. If we assume $w\neq0$, then we have
\begin{align*}
\mathbb{E}^{\mathbb{Q}}_{t,w}\left[\mid F(w,W_{T})\mid\right]\leq &\mathbb{E}^{\mathbb{Q}}_{t,w}\left[\mid W_{T} \mid + \mid \frac{\gamma(w)}{2}  (W_{T})^{2}\mid
+\mid \frac{\phi(w)}{3} (W_{T})^{3}\mid\right] <\infty.
\end{align*}
Similarly, we can show that
$$
G\left(w,\mathbb{E}^{\mathbb{Q}}_{t,w}\left[W_{T}\right]\right)
  +K\left(w,\mathbb{E}^{\mathbb{Q}}_{t,w}\left[W_{T}\right],\mathbb{E}^{\mathbb{Q}}_{t,w}\left[(W_{T})^{2}\right]\right)<\infty.
$$
On the whole, with the additional assumption of $w\neq0$, the conditions of Definition \ref{definition4.1} imply the conditions of Definition \ref{definition3.1}. By the way, it shows the well-posedness of our problem $\eqref{eq20}$. We would like to point out that the condition of $w\neq0$ will be verified for our derived equilibrium control-measure strategy later.
\end{remark}

In order to solve the HJBI equation $\eqref{eq16}$, we can first reduce $\eqref{eq16}$ into a highly nonlinear HJB equation. Since $V(t,w)=h(t,w,w)+G(w,g(t,w))+K(w,g(t,w),k(t,w))$, one has
$$
\mathcal{A^{\bm{\pi},\textbf{q}}}V(t,w)+C(w,t,w;\textbf{q})
  +\mathcal{L}(t,w,\bm{\pi},\textbf{q},h,g,k)=\mathcal{G}(t,w,\bm{\pi},\textbf{q},h,g,k)+C(w,t,w;\textbf{q}),
$$
where
\begin{equation*}
\begin{aligned}
\mathcal{G}(t,w,\bm{\pi},\textbf{q},h,g,k)=&\left(\mathcal{A^{\bm{\pi},\textbf{q}}}h(t,w,y)\right)|_{y=w}
+\frac{\partial G(w,y)}{\partial y}\bigg|_{y=g(t,w)}\mathcal{A^{\bm{\pi},\textbf{q}}}g(t,w)\\
&+\frac{\partial K(w,y,z)}{\partial y}\bigg|_{y=g(t,w),z=k(t,w)}\mathcal{A^{\bm{\pi},\textbf{q}}}g(t,w)+\frac{\partial K(w,y,z)}{\partial z}\bigg|_{y=g(t,w),z=k(t,w)}\mathcal{A^{\bm{\pi},\textbf{q}}}k(t,w).
\end{aligned}
\end{equation*}
We can take the minimum of the quadratic form of $\textbf{q}$ in the equation $\eqref{eq16}$ to derive that
\begin{align}\label{eq23}
   {q^{S}_{t}}^{\ast}=-\Phi(t,w)\xi \sigma_{t}'u_{t}\delta_{1},
\end{align}
where
\begin{equation}\label{eq24}
\begin{aligned}
\delta_{1}=&\frac{\partial h(t,w,y)}{\partial w}\bigg|_{y=w}+\frac{\partial G(w,y)}{\partial y}\bigg|_{y=g(t,w)}\frac{\partial g(t,w)}{\partial w}\\
&+\frac{\partial K(w,y,z)}{\partial y}\bigg|_{y=g(t,w),z=k(t,w)}\frac{\partial g(t,w)}{\partial w}+\frac{\partial K(w,y,z)}{\partial z}\bigg|_{y=g(t,w),z=k(t,w)}\frac{\partial k(t,w)}{\partial w}\\
=&\frac{\partial h(t,w,y)}{\partial w}\bigg|_{y=w}+\left(\gamma(w)g(t,w)+2\phi(w)g^{2}(t,w)\right)\frac{\partial g(t,w)}{\partial w}
-\phi(w)k(t,w)\frac{\partial g(t,w)}{\partial w}-\phi(w)g(t,w)\frac{\partial k(t,w)}{\partial w}.
\end{aligned}
\end{equation}
Thus, we change $\eqref{eq16}$ into the following highly nonlinear HJB equation
\begin{equation}\label{eq25}
  \sup\limits_{\bm{\pi}\in \Pi}\left(\mathcal{G}(t,w,\bm{\pi},\textbf{q}^{\ast},h,g,k)+C(w,t,w;\textbf{q}^{\ast})\right)=0,\quad 0\leq t <T
\end{equation}
with $\textbf{q}^{\ast}=\textbf{q}^{\ast}(\bm{\pi})={q^{S}_{t}}^{\ast}(t,w,u_{t})$. This means that the system of extended HJBI equations $\eqref{eq16}$-$\eqref{eq19}$ is changed into a system of extended HJB equations $\eqref{eq25}$, $\eqref{eq17}$-$\eqref{eq19}$.

Since the risk aversion and skewness preference coefficients are state-dependent in our setting, the function $h(t,w,y)$ is dependent on $y$. Let
\begin{equation}\label{eq26}
\begin{aligned}
\delta_{2}:=&\frac{\partial^{2} h(t,w,y)}{\partial w^{2}}\bigg|_{y=w}+\frac{\partial G(w,y)}{\partial y}\bigg|_{y=g(t,w)}\frac{\partial^{2} g(t,w)}{\partial w^{2}}\\
&+\frac{\partial K(w,y,z)}{\partial y}\bigg|_{y=g(t,w),z=k(t,w)}\frac{\partial^{2} g(t,w)}{\partial w^{2}}+\frac{\partial K(w,y,z)}{\partial z}\bigg|_{y=g(t,w),z=k(t,w)}\frac{\partial^{2} k(t,w)}{\partial w^{2}}\\
=&\frac{\partial^{2} h(t,w,y)}{\partial w^{2}}\bigg|_{y=w}+\left(\gamma(w)g(t,w)+2\phi(w)g^{2}(t,w)\right)\frac{\partial^{2} g(t,w)}{\partial w^{2}}\\
&-\phi(w)k(t,w)\frac{\partial^{2} g(t,w)}{\partial w^{2}}-\phi(w)g(t,w)\frac{\partial^{2} k(t,w)}{\partial w^{2}}.
\end{aligned}
\end{equation}
Following the investigations in \cite{Pun2018}, we adopt the ambiguity preference function as $\Phi(t,w)=-\delta_{2}/\delta_{1}^{2}$. Obviously, this specified ambiguity preference function is state-dependent and thus remains the property of homothetic robust \cite{Maenhout2004}. In other words, it is economically meaningful. On the other hand, such an ambiguity preference function facilitates analytical tractability. Therefore, ${q^{S}_{t}}^{\ast}$ in the equation $\eqref{eq23}$ can be rewritten as
${q^{S}_{t}}^{\ast}=\xi \frac{\delta_{2}}{\delta_{1}}\sigma_{t}'u_{t}$
and the HJB equation $\eqref{eq25}$ becomes
\begin{equation*}
  0=\sup\limits_{\bm{\pi}\in \Pi}\left\{\cdot\cdot\cdot+u_{t}'\beta_{t}\delta_{1}+\frac{\xi+1}{2}u_{t}'\Sigma_{t}u_{t}\delta_{2}+\cdot\cdot\cdot\right\},
\end{equation*}
where the terms without $u_{t}$ are suppressed. Now, we take the maximum of the above quadratic form of $u_{t}$ and then obtain the robust equilibrium investment strategy as
$$
u_{t}^{\ast}=-\frac{1}{\xi+1}\Sigma_{t}^{-1}\beta_{t}\frac{\delta_{1}}{\delta_{2}}.
$$

Considering terminal conditions of $h$, $g$ and $k$, i.e., $h(T,w,y)=w-\frac{\gamma_{0}}{2y}w^{2}+\frac{\phi_{0}}{3y^{2}}w^{3}$, $g(T,w)=w$ and $k(T,w)=w^{2}$, we assume the following forms for $h$, $g$ and $k$
\begin{equation}\label{eq27}
 h(t,w,y)=h_{1}(t)w-\frac{\gamma_{0}}{2y}h_{2}(t)w^{2}+\frac{\phi_{0}}{3y^{2}}h_{3}(t)w^{3},\quad g(t,w)=g_{1}(t)w,\quad k(t,w)=k_{1}(t)w^{2}
\end{equation}
with $h_{1}(T)=h_{2}(T)=h_{3}(T)=g_{1}(T)=k_{1}(T)=1$.
Then, one has
\begin{equation}\label{eq28}
   \left\{ \begin{aligned}
   &\frac{\partial h(t,w,y)}{\partial t}=\frac{\partial h_{1}(t)}{\partial t}w-\frac{\gamma_{0}}{2y}\frac{\partial h_{2}(t)}{\partial t}w^{2}+\frac{\phi_{0}}{3y^{2}}\frac{\partial h_{3}(t)}{\partial t}w^{3},\; \frac{\partial g(t,w)}{\partial t}=\frac{\partial g_{1}(t)}{\partial t}w,\\
   &\frac{\partial k(t,w)}{\partial t}=\frac{\partial k_{1}(t)}{\partial t}w^{2}, \quad \frac{\partial g(t,w)}{\partial w}=g_{1}(t), \quad \frac{\partial k(t,w)}{\partial w}=2k_{1}(t) w, \quad \frac{\partial^{2} k(t,w)}{\partial w^{2}}=2k_{1}(t),\\
   &\frac{\partial h(t,w,y)}{\partial w}=h_{1}(t)-\frac{\gamma_{0}}{y}h_{2}(t)w+\frac{\phi_{0}}{y^{2}}h_{3}(t)w^{2},\quad
   \frac{\partial h(t,w,y)}{\partial w}\bigg|_{y=w}=h_{1}(t)-\gamma_{0}h_{2}(t)+\phi_{0}h_{3}(t),\\
   &\frac{\partial^{2} h(t,w,y)}{\partial w^{2}}=-\frac{\gamma_{0}}{y}h_{2}(t)+\frac{2\phi_{0}}{y^{2}}h_{3}(t)w, \quad  \frac{\partial^{2} h(t,w,y)}{\partial w^{2}}\bigg|_{y=w}=-\frac{\gamma_{0}}{w}h_{2}(t)+\frac{2\phi_{0}}{w}h_{3}(t).
  \end{aligned}\right.
\end{equation}
Moreover, $(u_{t}^{\ast},{q^{S}_{t}}^{\ast})$ can be given by
\begin{equation}\label{eq29}
   \left\{ \begin{aligned}
   &u_{t}^{\ast}=\frac{w}{\xi+1}\Sigma_{t}^{-1}\beta_{t}\frac{h_{1}(t)+\gamma_{0}\left(g_{1}^{2}(t)-h_{2}(t)\right)+\phi_{0}
   \left(h_{3}(t)+2g_{1}^{3}(t)-3g_{1}(t)k_{1}(t)\right)}{\gamma_{0}h_{2}(t)+2\phi_{0}(g_{1}(t)k_{1}(t)-h_{3}(t))},\\
   &{q^{S}_{t}}^{\ast}=-\frac{\xi}{\xi+1}\sigma_{t}'\Sigma_{t}^{-1}\beta_{t}.
  \end{aligned}\right.
\end{equation}
Let
$$
f(t)=\frac{h_{1}(t)+\gamma_{0}\left(g_{1}^{2}(t)-h_{2}(t)\right)+\phi_{0}
\left(h_{3}(t)+2g_{1}^{3}(t)-3g_{1}(t)k_{1}(t)\right)}{\gamma_{0}h_{2}(t)+2\phi_{0}(g_{1}(t)k_{1}(t)-h_{3}(t))}.
$$
Substituting equations $\eqref{eq27}$-$\eqref{eq29}$ into equations $\eqref{eq17}$-$\eqref{eq19}$ as well as separating the terms through the order of $w$, $\frac{w^{2}}{y}$ and $\frac{w^{3}}{y^{2}}$, we can derive the following form:
\begin{equation}\label{eq30}
\left\{\begin{aligned}
   \frac{\partial h_{1}(t)}{\partial t}&+\left[r_{t}+\frac{\Theta_{t}f(t)}{(\xi+1)^{2}}\right]h_{1}(t)+\frac{\xi \Theta_{t}f^{2}(t)}{2(\xi+1)^{2}}
   \left[\gamma_{0} h_{2}(t)+2\phi_{0}(g_{1}(t)k_{1}(t)-h_{3}(t))\right]=0,\\
   \frac{\partial h_{2}(t)}{\partial t}&+\left[2r_{t}+\frac{2\Theta_{t}f(t)}{(\xi+1)^{2}}+\frac{\Theta_{t}f^{2}(t)}{(\xi+1)^{2}}\right]h_{2}(t)=0,\\
   \frac{\partial h_{3}(t)}{\partial t}&+3\left[r_{t}+\frac{\Theta_{t}f(t)}{(\xi+1)^{2}}+\frac{\Theta_{t}f^{2}(t)}{(\xi+1)^{2}}\right]h_{3}(t)=0,\\
   \frac{\partial g_{1}(t)}{\partial t}&+\left[r_{t}+\frac{\Theta_{t}f(t)}{(\xi+1)^{2}}\right]g_{1}(t)=0,\\
   \frac{\partial k_{1}(t)}{\partial t}&+\left[2r_{t}+\frac{2\Theta_{t}f(t)}{(\xi+1)^{2}}+\frac{\Theta_{t}f^{2}(t)}{(\xi+1)^{2}}\right]k_{1}(t)=0.
\end{aligned}\right.
\end{equation}
It is easy to see that $h_{2}(t)=k_{1}(t)$ for any $t\in[0,T]$. After further calculations, we can obtain
\begin{equation}\label{eq31}
\left\{\begin{aligned}
   h_{2}(t)=&k_{1}(t)=e^{\int^{T}_{t}\left[2r_{s}+\frac{2\Theta_{s}f(s)}{(\xi+1)^{2}}+\frac{\Theta_{s}f^{2}(s)}{(\xi+1)^{2}}\right]\mathrm{d}s},\\
   h_{3}(t)=&e^{\int^{T}_{t}3\left[r_{s}+\frac{\Theta_{s}f(s)}{(\xi+1)^{2}}+\frac{\Theta_{s}f^{2}(s)}{(\xi+1)^{2}}\right]\mathrm{d}s},\\
   g_{1}(t)=&e^{\int^{T}_{t}\left[r_{s}+\frac{\Theta_{s}f(s)}{(\xi+1)^{2}}\right]\mathrm{d}s},\\
   h_{1}(t)=&g_{1}(t)+\int^{T}_{t}\frac{\xi \Theta_{s}f^{2}(s)}{2(\xi+1)^{2}}\left[\gamma_{0} h_{2}(s)+2\phi_{0}(g_{1}(s)h_{2}(s)-h_{3}(s))\right]
   \times e^{\int^{s}_{t}\left[r_{u}+\frac{\Theta_{u}f(u)}{(\xi+1)^{2}}\right]\mathrm{d}u}\mathrm{d}s.
\end{aligned}\right.
\end{equation}
In addition, the expression of $f(t)$ can be rewritten as an integral equation
\begin{eqnarray}\label{eq32}
f(t)&=&\left[\gamma_{0}e^{\int^{T}_{t}\left[2r_{s}+\frac{2\Theta_{s}f(s)}{(\xi+1)^{2}}+\frac{\Theta_{s}f^{2}(s)}{(\xi+1)^{2}}\right]\mathrm{d}s}+2\phi_{0}
   e^{\int^{T}_{t}\left[3r_{s}+\frac{3\Theta_{s}f(s)}{(\xi+1)^{2}}+\frac{\Theta_{s}f^{2}(s)}{(\xi+1)^{2}}\right]\mathrm{d}s}\left(
   1-e^{\int^{T}_{t}\frac{2\Theta_{s}f^{2}(s)}{(\xi+1)^{2}}\mathrm{d}s}\right)\right]^{-1}\nonumber\\
   &&\times\left\{\int^{T}_{t}\frac{\xi \Theta_{s}f^{2}(s)}{2(\xi+1)^{2}}\left[\gamma_{0}e^{\int^{T}_{s}\left[2r_{u}+\frac{2\Theta_{u}f(u)}{(\xi+1)^{2}}
   +\frac{\Theta_{u}f^{2}(u)}{(\xi+1)^{2}}\right]\mathrm{d}u}
   +2\phi_{0}e^{\int^{T}_{s}\left[3r_{u}+\frac{3\Theta_{u}f(u)}{(\xi+1)^{2}}+\frac{\Theta_{u}f^{2}(u)}{(\xi+1)^{2}}\right]\mathrm{d}u}\right.\right.\nonumber\\
   &&\left.\times\left(
   1-e^{\int^{T}_{s}\frac{2\Theta_{u}f^{2}(u)}{(\xi+1)^{2}}\mathrm{d}u}\right)\right]
   \times e^{\int^{s}_{t}\left[r_{u}+\frac{\Theta_{u}f(u)}{(\xi+1)^{2}}\right]\mathrm{d}u}\mathrm{d}s+e^{\int^{T}_{t}\left[r_{s}+\frac{\Theta_{s}f(s)}{(\xi+1)^{2}}\right]\mathrm{d}s}\nonumber\\
   &&+\phi_{0}e^{\int^{T}_{t}3\left[r_{s}+\frac{\Theta_{s}f(s)}{(\xi+1)^{2}}\right]\mathrm{d}s}\left(e^{\int^{T}_{t}\frac{3\Theta_{s}f^{2}(s)}{(\xi+1)^{2}}\mathrm{d}s}+2-3
   e^{\int^{T}_{t}\frac{\Theta_{s}f^{2}(s)}{(\xi+1)^{2}}\mathrm{d}s}\right)\nonumber\\
   &&\left.+\gamma_{0}e^{\int^{T}_{t}2\left[r_{s}+\frac{\Theta_{s}f(s)}{(\xi+1)^{2}}\right]\mathrm{d}s}\left(1-
   e^{\int^{T}_{t}\frac{\Theta_{s}f^{2}(s)}{(\xi+1)^{2}}\mathrm{d}s}\right)\right\}
\end{eqnarray}
with $f(T)=\frac{1}{\gamma_{0}}$.
\begin{thm}\label{thm4.1}
The integral equation $\eqref{eq32}$ admits a unique solution $f\in C^{1}[0,T]$.
\end{thm}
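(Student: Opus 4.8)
The plan is to read \eqref{eq32} as a fixed-point equation $f=\mathcal{T}f$ on the Banach space $C[0,T]$ and to invoke the Banach contraction mapping principle, exploiting the \emph{Volterra} structure of the equation: the right-hand side of \eqref{eq32} depends on $f$ only through its values on $[t,T]$, together with the anchoring condition $f(T)=\frac{1}{\gamma_{0}}$. It is convenient to reverse time via $\tau=T-t$, turning \eqref{eq32} into a forward Volterra equation with ``initial'' datum prescribed at $\tau=0$, after which local-in-$\tau$ solvability followed by continuation yields a solution on all of $[0,T]$.

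First I would establish that $\mathcal{T}$ is well defined on a suitable closed ball. The only place where $\mathcal{T}f$ could fail to be defined is the bracketed prefactor raised to the power $-1$, namely the denominator $\gamma_{0}h_{2}(t)+2\phi_{0}\left(g_{1}(t)k_{1}(t)-h_{3}(t)\right)$ built from \eqref{eq31}. At $t=T$ the terminal values $h_{2}(T)=g_{1}(T)=h_{3}(T)=k_{1}(T)=1$ give denominator $=\gamma_{0}>0$, so by continuity it stays positive on a neighbourhood $[T-\delta,T]$. Using $\gamma_{0},\phi_{0}>0$, the nonnegativity of $\Theta_{t}=\beta_{t}'\Sigma_{t}^{-1}\beta_{t}$, and the assumed boundedness of $r_{t}$ and $\Theta_{t}$, an a priori bound $\|f\|_{\infty}\le M$ on a ball $B_{M}\subset C[0,T]$ translates, via \eqref{eq31}, into uniform two-sided bounds on $h_{1},h_{2},h_{3},g_{1},k_{1}$, hence a uniform positive lower bound on the denominator and a uniform bound on $\mathcal{T}f$. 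Choosing $M$ (and, if needed, $\delta$) so that $\mathcal{T}(B_{M})\subset B_{M}$ supplies the invariance required for the fixed-point argument.

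Next I would prove the contraction estimate. Because every occurrence of $f$ in \eqref{eq32} enters either through an integral $\int_{t}^{T}(\cdots)\mathrm{d}s$ or through an exponential of such an integral, the difference $\mathcal{T}f_{1}-\mathcal{T}f_{2}$ is controlled, on $B_{M}$, by $\int_{t}^{T}|f_{1}(s)-f_{2}(s)|\mathrm{d}s$ times a constant $L(M)$ depending on $M$, on the bounds for $r_{t},\Theta_{t}$, and on the lower bound for the denominator (the exponentials and the quadratic term $f^{2}$ are locally Lipschitz on $B_{M}$). On a short interval $[T-\delta,T]$ with $L(M)\delta<1$ this already makes $\mathcal{T}$ a contraction; to reach all of $[0,T]$ in one step I would instead keep the invariant ball $B_{M}$ but re-metrize it with the Bielecki weighted norm $\|f\|_{\lambda}=\sup_{t\in[0,T]}e^{-\lambda(T-t)}|f(t)|$, which is equivalent to the sup-norm on $[0,T]$ (so $B_{M}$ remains complete) and, for $\lambda$ large enough, turns $\mathcal{T}$ into a global contraction on $B_{M}$. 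Banach's theorem then yields a unique continuous $f$.

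Regularity is a bootstrap: once $f\in C[0,T]$ is known, every integrand in \eqref{eq32} is continuous, so the integrals and the exponentials of integrals are $C^{1}$ in $t$, whence $f=\mathcal{T}f\in C^{1}[0,T]$; equivalently, one recovers the ODE system \eqref{eq30}, whose right-hand side (after substituting $f$) is locally Lipschitz in $(h_{1},h_{2},h_{3},g_{1},k_{1})$ wherever the denominator is nonzero, so Picard--Lindel\"{o}f delivers a $C^{1}$ solution. The main obstacle is precisely the control of the denominator $\gamma_{0}h_{2}(t)+2\phi_{0}\left(g_{1}(t)k_{1}(t)-h_{3}(t)\right)$: its skewness contribution is nonpositive, since $g_{1}k_{1}-h_{3}=e^{\int_{t}^{T}[3r_{s}+3\Theta_{s}f/(\xi+1)^{2}+\Theta_{s}f^{2}/(\xi+1)^{2}]\mathrm{d}s}\,(1-e^{\int_{t}^{T}2\Theta_{s}f^{2}/(\xi+1)^{2}\mathrm{d}s})\le 0$, and could a priori cancel the risk-aversion term $\gamma_{0}h_{2}$. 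Securing a uniform positive lower bound on it, and thereby closing the loop between the radius $M$ of the invariant ball, the invariance $\mathcal{T}(B_{M})\subset B_{M}$, and the contraction constant $L(M)$, is the delicate technical heart of the argument; the remaining difficulty is the bookkeeping generated by the iterated exponentials and the quadratic term $f^{2}$, which the Volterra time-localization keeps under control.
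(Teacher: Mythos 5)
Your overall strategy (read \eqref{eq32} as a backward Volterra fixed-point problem anchored at $f(T)=1/\gamma_{0}$ and run a contraction argument) is a legitimately different route from the paper's, but it has a genuine gap, and it sits exactly at the point you yourself flag as ``the delicate technical heart'': control of the denominator $f_{7}(t)=\gamma_{0}h_{2}(t)+2\phi_{0}\left(g_{1}(t)k_{1}(t)-h_{3}(t)\right)$. Your claim that a sup-norm bound $\|f\|_{\infty}\le M$ ``translates, via \eqref{eq31}, into a uniform positive lower bound on the denominator'' is not justified and is false in general: as you note, $g_{1}k_{1}-h_{3}\le 0$, and for $M$ large and $t$ far from $T$ the factor $1-e^{\int_{t}^{T}2\Theta_{s}f^{2}(s)/(\xi+1)^{2}\mathrm{d}s}$ can be arbitrarily negative, so the skewness term can overwhelm $\gamma_{0}h_{2}$ and drive the denominator through zero for some $f\in B_{M}$. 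Consequently $\mathcal{T}$ need not even be defined on all of $B_{M}$, the invariance $\mathcal{T}(B_{M})\subset B_{M}$ is not established, and the Bielecki re-norming cannot rescue this, since it only tames the growth of the Lipschitz constant, not the degeneration of the denominator. What survives of your argument is the local piece near $t=T$ (where the denominator is close to $\gamma_{0}>0$ by continuity); the continuation to all of $[0,T]$ is precisely what is missing.

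The paper closes this gap by a different mechanism, in three steps. First, it shows by contradiction that for any solution $f$ of \eqref{eq32} the denominator $f_{7}(t)$ cannot vanish anywhere on $[0,T]$ (if $f_{7}(t_{0})=0$ then $f$ would be finite near $t_{0}$ while the equation forces the right-hand side to blow up). Second, it derives an a priori two-sided bound on $f$ over $[0,T]$ by a domination argument on the competing exponentials in \eqref{eq32} (if $f$ were unbounded, the right-hand side would tend to $-\tfrac12$, contradicting the left-hand side). Third --- and this is the step your proposal has no analogue of --- it passes to the ratios $f_{1}=h_{1}/f_{7},\dots,f_{6}=g_{1}h_{2}/f_{7}$, in terms of which $f$ is a fixed linear combination and the dynamics \eqref{eq30} become a closed system $\partial f_{i}/\partial t=P_{i}(t,f_{1},\dots,f_{6})$ with \emph{polynomial} right-hand sides and bounded Lipschitz coefficients; the denominator is absorbed into the new variables and never reappears, so standard ODE existence-uniqueness applies directly. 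If you want to keep your contraction-mapping framework, you would need to either import the paper's a priori nonvanishing/boundedness arguments for the actual solution (and restrict the domain of $\mathcal{T}$ accordingly), or perform an analogous change of variables that removes the denominator before setting up the fixed point.
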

\begin{proof}
See Appendix D.
\end{proof}

On the basis of the above investigations, we are now easy to derive the following main result.
\begin{thm}\label{thm4.2}
The robust mean-variance-skewness portfolio selection problem $\eqref{eq20}$ with $\Phi(t,w)=-\delta_{2}/\delta_{1}^{2}$ admits the robust value function
\begin{equation}\label{eq35}
  V(t,w)=\left[h_{1}(t)-\frac{\gamma_{0}}{2}\left(h_{2}(t)-g_{1}^{2}(t)\right)+\frac{\phi_{0}}{3}\left(2g_{1}^{3}(t)-3g_{1}(t)h_{2}(t)+h_{3}(t)\right)\right]w
\end{equation}
and the equilibrium control-measure strategy is
\begin{equation}\label{eq36}
   \left\{ \begin{aligned}
   &u_{t}^{\ast}=\frac{w}{\xi+1}\Sigma_{t}^{-1}\beta_{t}\frac{h_{1}(t)+\gamma_{0}\left(g_{1}^{2}(t)-h_{2}(t)\right)+\phi_{0}
   \left(h_{3}(t)+2g_{1}^{3}(t)-3g_{1}(t)h_{2}(t)\right)}{\gamma_{0}h_{2}(t)+2\phi_{0}(g_{1}(t)h_{2}(t)-h_{3}(t))},\\
   &{q^{S}_{t}}^{\ast}=-\frac{\xi}{\xi+1}\sigma_{t}'\Sigma_{t}^{-1}\beta_{t},
  \end{aligned}\right.
\end{equation}
where $\delta_{1}$, $\delta_{2}$, $h_{1}(t)$, $h_{2}(t)$, $h_{3}(t)$ and $g_{1}(t)$ are given by equations $\eqref{eq24}$, $\eqref{eq26}$ and $\eqref{eq31}$.
\end{thm}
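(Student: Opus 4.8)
The plan is to promote the candidate $(h,g,k,V)$ built from the ansatz \eqref{eq27} to a genuine solution of the reduced HJB system, to certify that the associated control-measure pair is admissible in the sense of Definition \ref{definition4.1}, and then to invoke the verification theorem (Theorem \ref{thm3.1}). The construction is essentially in place: inserting the separable forms \eqref{eq27} into \eqref{eq17}--\eqref{eq19} and matching the coefficients of $w$, $w^{2}/y$ and $w^{3}/y^{2}$ collapses the system to the linear ODEs \eqref{eq30}, whose explicit solutions \eqref{eq31} are well defined once the scalar $f$ exists. Theorem \ref{thm4.1} furnishes a unique $f\in C^{1}[0,T]$, so $h_{1},h_{2},h_{3},g_{1},k_{1}\in C^{1}$, and the resulting $h,g,k$ inherit the smoothness ($h\in\mathbb{C}^{1,2,1,2}$, $g,k,V\in\mathbb{C}^{1,2}$, together with the $\mathbb{C}^{1,2,2}$ and $\mathbb{C}^{1,2,2,2}$ regularity of $G,K$) demanded by Theorem \ref{thm3.1}. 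Evaluating $\delta_{1},\delta_{2}$ from \eqref{eq24} and \eqref{eq26} under \eqref{eq27} shows that $\delta_{1}=h_{1}+\gamma_{0}(g_{1}^{2}-h_{2})+\phi_{0}(h_{3}+2g_{1}^{3}-3g_{1}h_{2})$ is independent of $w$ while $\delta_{2}=-\tfrac{1}{w}[\gamma_{0}h_{2}+2\phi_{0}(g_{1}h_{2}-h_{3})]$, which reproduces the stated $u_{t}^{\ast}$ and ${q^{S}_{t}}^{\ast}$ in \eqref{eq36}; and since $V(t,w)=h(t,w,w)+G(w,g(t,w))+K(w,g(t,w),k(t,w))$, substituting \eqref{eq27} and collecting the common multiple of $w$ yields \eqref{eq35}.

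The crux---and, as the introduction stresses, the step omitted in \cite{Mu2019,Pun2018}---is the admissibility check. I would first note that under $(\bm{\pi}^{\ast},\bm{q}^{\ast})$ the wealth dynamics become the linear SDE $\mathrm{d}W_{s}=a(s)W_{s}\,\mathrm{d}s+W_{s}c(s)'\mathrm{d}W_{s}^{S,\mathbb{Q}}$ with $a$ and $c$ deterministic and bounded (boundedness of $r_{t},\beta_{t},\sigma_{t},\Theta_{t}$ together with $f\in C^{1}[0,T]$). Hence $W_{s}$ is an explicit exponential (geometric) process: this gives at once condition (iii) of Definition \ref{definition4.1} (unique strong solution) and, because $w_{0}>0$, shows $W_{s}>0$ for every $s$, which is exactly the positivity promised in Remark \ref{remark3} and is needed for $\gamma(w),\phi(w)$ and the smoothness of $G,K$ to be meaningful. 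Condition (ii) is immediate since ${q^{S}_{t}}^{\ast}=-\tfrac{\xi}{\xi+1}\sigma_{t}'\Sigma_{t}^{-1}\beta_{t}$ is deterministic and bounded, while the lognormality of $W$ supplies finite moments of every order, so that $\mathbb{E}^{\mathbb{Q}}[(\int_{0}^{T}\|u_{s}^{\ast}\|^{2}\mathrm{d}s)^{4}]<\infty$ (condition (iv)) follows from $\|u_{s}^{\ast}\|^{2}\le CW_{s}^{2}$ and a Hölder estimate; this is the same quantitative control on $W$ recorded in Lemma \ref{lemma4.1}.

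For condition (v), $\mathbb{E}^{\mathbb{Q}}[\sup_{t}\Phi^{-2}(t,W_{t})]<\infty$, I would exploit that the chosen $\Phi(t,w)=-\delta_{2}/\delta_{1}^{2}=P(t)/w$ with $P(t)=[\gamma_{0}h_{2}+2\phi_{0}(g_{1}h_{2}-h_{3})]/\delta_{1}^{2}$, so $\Phi^{-2}(t,W_{t})\propto W_{t}^{2}/P(t)^{2}$ and the bound reduces to $\mathbb{E}^{\mathbb{Q}}[\sup_{t}W_{t}^{2}]<\infty$, again from lognormality, provided $P$ is bounded away from $0$. The genuine point to settle is the sign condition $\delta_{2}<0$, i.e. $\gamma_{0}h_{2}+2\phi_{0}(g_{1}h_{2}-h_{3})>0$: this is what makes $\Phi>0$ (so that $\Phi$ is legitimately $R^{+}$-valued) and keeps $P$ bounded below, and I expect it to follow from the explicit signs of $h_{2},g_{1},h_{3}$ in \eqref{eq31} and $\gamma_{0},\phi_{0}>0$, though it is where care is required. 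With admissibility, positivity of $W$, and the sup-inf attainment in hand, Theorem \ref{thm3.1} applies directly and identifies $(\bm{\pi}^{\ast},\bm{q}^{\ast})$ as the equilibrium control-measure strategy and $V$ in \eqref{eq35} as the equilibrium value function. I anticipate the moment/admissibility verification, rather than the algebra of the ansatz, to be the main obstacle, precisely because it rests on the quantitative moment control of $W$ and on the positivity of $\Phi$.
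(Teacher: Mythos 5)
Your proposal is correct and follows essentially the same route as the paper: the formulas come from matching the ansatz \eqref{eq27} in the HJB system (done in the body of Section 4), and the proof in Appendix E is precisely the admissibility check of Definition \ref{definition4.1} via the explicit geometric form of $W^{\ast}$, the moment bound of Lemma \ref{lemma4.1}, the identity $\Phi^{-1}(t,w)=f^{2}(t)\left[\gamma_{0}h_{2}(t)+2\phi_{0}(g_{1}(t)h_{2}(t)-h_{3}(t))\right]w$, and the smoothness hypotheses of Theorem \ref{thm3.1}. The one point where you are slightly more optimistic than the paper is the sign condition $\gamma_{0}h_{2}+2\phi_{0}(g_{1}h_{2}-h_{3})>0$, which you hope to extract from the explicit expressions \eqref{eq31}; the authors state in Remark \ref{remark5} that they cannot verify it analytically and only confirm it numerically, so you should not rely on it being provable from the signs alone.
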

\begin{proof}
See Appendix E.
\end{proof}

\begin{remark}\label{remark4}
Clearly, $\eqref{eq36}$ implies the following facts: (i) the robust equilibrium investment strategy depends on the ambiguity averse parameter $\xi$ and the risk aversion coefficient $\gamma_{0}$ as well as the skewness preference parameter $\phi_{0}$; (ii) the robust equilibrium investment strategy is proportional to the current wealth value; (iii) the proportion $\frac{u_{t}^{\ast}}{w}$, invested in the risky assets, is same to the poor and the rich, which means it is independent of the initial wealth value.
\end{remark}

\begin{remark}\label{remark5}
Note that $\Phi$ is assumed to be an $R^{+}$-valued function which measures the degree of the investor's confidence in the reference model with the form $\Phi(t,w)=-\delta_{2}/\delta_{1}^{2}$. It follows from the equations $\eqref{eq26}$ and $\eqref{eq28}$ and the proof of Theorem \ref{thm4.2} that $$\delta_{2}=-\frac{1}{w}\left[\gamma_{0}h_{2}(t)+2\phi_{0}(g_{1}(t)k_{1}(t)-h_{3}(t))\right]$$ and $w>0$. Thus, we only need to verify that $\delta_{3}:=\gamma_{0}h_{2}(t)+2\phi_{0}(g_{1}(t)k_{1}(t)-h_{3}(t))>0$. Since it is difficult to verify this directly, similar to the work \cite{Wang2021}, we will illustrate $\delta_{3}>0$ by numerical experiments in Section 5.
\end{remark}

At the end of this section, we give some special cases of Theorem \ref{thm4.2}.

\begin{cor}\label{cor1}
 If the investor is ambiguity-neutral, i.e., the ambiguity-averse parameter $\xi\downarrow0$, then $D_{t,x}(\mathbb{Q}\parallel\mathbb{P})=0$. Therefore, the equilibrium value function and the equilibrium investment strategy are given by
\begin{equation}\label{eq37}
  \widetilde{V}(t,w)=\left[\widetilde{h}_{1}(t)-\frac{\gamma_{0}}{2}\left(\widetilde{h}_{2}(t)-\widetilde{h}_{1}^{2}(t)\right)+\frac{\phi_{0}}{3}
  \left(2\widetilde{h}_{1}^{3}(t)-3\widetilde{h}_{1}(t)\widetilde{h}_{2}(t)+\widetilde{h}_{3}(t)\right)\right]w
\end{equation}
and
\begin{equation}\label{eq38}
   \widetilde{u}_{t}^{\ast}=\Sigma_{t}^{-1}\beta_{t}\widetilde{f}(t)w,\\
\end{equation}
respectively, where
\begin{equation}\label{eq39}
\left\{\begin{aligned}
   \widetilde{h}_{1}(t)=&e^{\int^{T}_{t}\left[r_{s}+\Theta_{s}\widetilde{f}(s)\right]\mathrm{d}s},\\
   \widetilde{h}_{2}(t)=&e^{\int^{T}_{t}\left[2r_{s}+2\Theta_{s}\widetilde{f}(s)+\Theta_{s}\widetilde{f}^{2}(s)\right]\mathrm{d}s},\\
   \widetilde{h}_{3}(t)=&e^{\int^{T}_{t}3\left[r_{s}+\Theta_{s}\widetilde{f}(s)+\Theta_{s}\widetilde{f}^{2}(s)\right]\mathrm{d}s}\\
\end{aligned}\right.
\end{equation}
and $$\widetilde{f}(t)=\frac{\widetilde{h}_{1}(t)+\gamma_{0}\left(\widetilde{h}_{1}^{2}(t)-\widetilde{h}_{2}(t)\right)+\phi_{0}
\left(\widetilde{h}_{3}(t)+2\widetilde{h}_{1}^{3}(t)-3\widetilde{h}_{1}(t)\widetilde{h}_{2}(t)\right)}{\gamma_{0}\widetilde{h}_{2}(t)
+2\phi_{0}(\widetilde{h}_{1}(t)\widetilde{h}_{2}(t)-\widetilde{h}_{3}(t))}.$$
\end{cor}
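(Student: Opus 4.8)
The plan is to obtain the corollary as the degenerate limit $\xi\downarrow0$ of Theorem \ref{thm4.2}, reading the ambiguity-neutral formulas directly off the closed-form expressions \eqref{eq30}--\eqref{eq36}. First I would argue that letting $\xi\downarrow0$ eliminates the model uncertainty. In the penalty term the inverse ambiguity matrix is $\Xi(w)^{-1}=\xi^{-1}I_{M\times M}$, so as $\xi\downarrow0$ the cost $\frac{\mathbf q_s'\Xi(w)^{-1}\mathbf q_s}{2\Phi(s,W_s)}$ of any distortion $\mathbf q\neq0$ diverges; hence the inner infimum is driven to $\mathbf q^\ast=0$, which is confirmed by \eqref{eq29}/\eqref{eq36} since $q_t^{S\ast}=-\frac{\xi}{\xi+1}\sigma_t'\Sigma_t^{-1}\beta_t\to0$. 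Consequently the optimal measure collapses to the reference measure, $\mathbb Q=\mathbb P$, so $D_{t,x}(\mathbb Q\parallel\mathbb P)=0$ and problem \eqref{eq20} reduces to the ambiguity-neutral mean-variance-skewness problem.

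Next I would identify which coefficient functions merge when $\xi=0$. The identity $h_2\equiv k_1$ already holds for every $\xi$ (as noted below \eqref{eq30}). At $\xi=0$ the additional simplification is $h_1\equiv g_1$: indeed the last line of \eqref{eq31} expresses $h_1(t)-g_1(t)$ as an integral carrying an explicit factor $\xi$, which vanishes as $\xi\downarrow0$; equivalently, in \eqref{eq30} the $\xi$-proportional forcing term in the $h_1$-equation disappears, leaving it identical to the $g_1$-equation with the same terminal value $1$. Writing $\widetilde h_1=h_1=g_1$, $\widetilde h_2=h_2=k_1$ and $\widetilde h_3=h_3$, and setting $(\xi+1)^2=1$ in the three remaining decoupled linear ODEs, integration yields the exponential expressions \eqref{eq39}. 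Substituting $h_1=g_1=\widetilde h_1$, $k_1=h_2=\widetilde h_2$, $h_3=\widetilde h_3$ into the definition of $f$ then collapses it to the stated $\widetilde f(t)$.

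It remains to pass to the limit in the value function and the strategy. Substituting $\widetilde h_1,\widetilde h_2,\widetilde h_3$ into \eqref{eq35} produces \eqref{eq37}, and in \eqref{eq36} the prefactor $\frac1{\xi+1}\to1$ while the ratio multiplying it tends to $\widetilde f(t)$, so $u_t^\ast\to\Sigma_t^{-1}\beta_t\widetilde f(t)w$, which is exactly \eqref{eq38}.

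The main obstacle is making the passage $\xi\downarrow0$ rigorous at the level of the fixed-point map, since the coefficient $f$ is only defined implicitly through the integral equation \eqref{eq32}: one must verify that its unique solution $f^{(\xi)}$ depends continuously on $\xi$ and converges uniformly on $[0,T]$ to the solution $\widetilde f$ of the $\xi=0$ integral equation. I would handle this by reusing the contraction-mapping argument of Theorem \ref{thm4.1}: the operator defining \eqref{eq32} depends smoothly on $\xi$ through the factors $\frac1{(\xi+1)^2}$, and its contraction constant can be chosen uniformly for $\xi$ in a neighbourhood of $0$, so the standard continuous-dependence estimate for fixed points yields $f^{(\xi)}\to\widetilde f$ in $C[0,T]$. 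Alternatively, and more simply, one may observe that $\xi=0$ is itself an admissible parameter value for which Theorems \ref{thm4.1} and \ref{thm4.2} apply verbatim with the penalty term absent and $\mathbf q\equiv0$ fixed, so that the ambiguity-neutral formulas follow by direct substitution rather than by a limiting argument.
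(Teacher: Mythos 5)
Your proposal is correct and coincides with what the paper implicitly does: Corollary \ref{cor1} is stated without a separate proof precisely because it is the direct specialization of Theorem \ref{thm4.2} at $\xi=0$, under which $q_t^{S\ast}=0$, the forcing term in the $h_1$-equation of \eqref{eq30} vanishes so that $h_1\equiv g_1$ (and $h_2\equiv k_1$ as always), and \eqref{eq31}, \eqref{eq35}, \eqref{eq36} collapse to \eqref{eq39}, \eqref{eq37}, \eqref{eq38}. Your additional remarks on the continuity of $f^{(\xi)}$ in $\xi$, and the simpler observation that $\xi=0$ is itself an admissible parameter value, are sound but go beyond what the paper records.
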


\begin{remark}
If there exists only one risky asset in the financial market, then Corollary \ref{cor1} is reduced to Theorem 3 in \cite{Mu2019}.
\end{remark}

\begin{cor}\label{cor2} \cite[Proposition 7]{Pun2018}
 If we consider the portfolio selection problem for the ambiguity averse investor in which there is no skewness preference in the objective function $\eqref{eq20}$, i.e., the skewness preference coefficient $\phi_{0}$ equals to 0, then we have
$\hat{f}(t)=\frac{\hat{h}_{1}(t)+\gamma_{0}\left(\hat{g}_{1}^{2}(t)-\hat{h}_{2}(t)\right)}{\gamma_{0}\hat{h}_{2}(t)}$, where $\hat{h}_{1}(t)$, $\hat{h}_{2}(t)$ and $\hat{g}_{1}(t)$ satisfy
\begin{equation}\label{eq40}
\left\{\begin{aligned}
   \hat{h}_{2}(t)=&e^{\int^{T}_{t}\left[2r_{s}+\frac{2\Theta_{s}\hat{f}(s)}{(\xi+1)^{2}}+\frac{\Theta_{s}\hat{f}^{2}(s)}{(\xi+1)^{2}}\right]\mathrm{d}s},\\
   \hat{g}_{1}(t)=&e^{\int^{T}_{t}\left[r_{s}+\frac{\Theta_{s}\hat{f}(s)}{(\xi+1)^{2}}\right]\mathrm{d}s},\\
   \hat{h}_{1}(t)=&\hat{g}_{1}(t)+\int^{T}_{t}\frac{\xi \Theta_{s}\hat{f}^{2}(s)}{2(\xi+1)^{2}}\gamma_{0} \hat{h}_{2}(s)
   \times e^{\int^{s}_{t}\left[r_{u}+\frac{\Theta_{u}\hat{f}(u)}{(\xi+1)^{2}}\right]\mathrm{d}u}\mathrm{d}s.
\end{aligned}\right.
\end{equation}
Moreover, the equilibrium value function and the equilibrium control-measure strategy are given by
\begin{equation}\label{eq41}
  \hat{V}(t,w)=\left[\hat{h}_{1}(t)-\frac{\gamma_{0}}{2}\left(\hat{h}_{2}(t)-\hat{g}_{1}^{2}(t)\right)\right]w
\end{equation}
and
\begin{equation}\label{eq42}
   \left\{ \begin{aligned}
   &\hat{u}_{t}^{\ast}=\frac{w}{\xi+1}\Sigma_{t}^{-1}\beta_{t}\hat{f}(t),\\
   &{q^{S}_{t}}^{\ast}=-\frac{\xi}{\xi+1}\sigma_{t}'\Sigma_{t}^{-1}\beta_{t},
  \end{aligned}\right.
\end{equation}
respectively.
\end{cor}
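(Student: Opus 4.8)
The plan is to obtain Corollary \ref{cor2} as a direct specialization of Theorem \ref{thm4.2}, setting the skewness preference coefficient $\phi_{0}=0$ so that $\phi(w)=\phi_{0}/w^{2}\equiv0$ and the mean-variance-skewness problem \eqref{eq20} degenerates to a robust mean-variance problem. First I would substitute $\phi_{0}=0$ into the coupling function $f(t)$ of Theorem \ref{thm4.2} (recalling $h_{2}=k_{1}$). Every occurrence of $h_{3}(t)$ and of the cubic term $2g_{1}^{3}(t)$ in the numerator, together with $2\phi_{0}(g_{1}k_{1}-h_{3})$ in the denominator, carries an explicit factor $\phi_{0}$; hence these terms vanish and $f$ collapses to $\hat{f}(t)=\dfrac{\hat{h}_{1}(t)+\gamma_{0}(\hat{g}_{1}^{2}(t)-\hat{h}_{2}(t))}{\gamma_{0}\hat{h}_{2}(t)}$, which is the stated expression.

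Next I would pass to the ODE system \eqref{eq30}. With $\phi_{0}=0$ the equations for $h_{1}$, $h_{2}=k_{1}$ and $g_{1}$ keep exactly the coefficients displayed in \eqref{eq30} after deletion of the $\phi_{0}$-terms, while the equation for $h_{3}$ decouples completely from the remaining system and from $\hat{V},\hat{u}^{*}_{t}$ (its influence being everywhere weighted by $\phi_{0}$); thus $h_{3}$ can be discarded. Solving the reduced linear ODEs by the integrating-factor method, with terminal data $\hat{h}_{1}(T)=\hat{h}_{2}(T)=\hat{g}_{1}(T)=1$, reproduces the closed forms \eqref{eq40}; in particular $\hat{h}_{1}$ arises as the integral of the forcing term $\frac{\xi\Theta_{s}\hat{f}^{2}(s)}{2(\xi+1)^{2}}\gamma_{0}\hat{h}_{2}(s)$ against the integrating factor $e^{\int_{t}^{s}[r_{u}+\Theta_{u}\hat{f}(u)/(\xi+1)^{2}]\mathrm{d}u}$. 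Substituting these back into the value function \eqref{eq35} and the strategy \eqref{eq36} with $\phi_{0}=0$ then yields \eqref{eq41} and \eqref{eq42} respectively.

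Before concluding I would verify the two consistency conditions inherited from the construction in Theorem \ref{thm4.2}. The specified ambiguity-preference function must remain admissible: using \eqref{eq26}, \eqref{eq28} and $\partial^{2}g/\partial w^{2}=0$, one gets $\delta_{2}=-\gamma_{0}\hat{h}_{2}(t)/w$, so $\Phi(t,w)=-\delta_{2}/\delta_{1}^{2}=\gamma_{0}\hat{h}_{2}(t)/(w\,\delta_{1}^{2})>0$ for $w>0$, confirming that it is still $R^{+}$-valued and homothetic. The well-posedness of $\hat{f}$ (existence and uniqueness in $C^{1}[0,T]$) follows from Theorem \ref{thm4.1} applied with $\phi_{0}=0$, or alternatively from \cite{Pun2018}.

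The main point requiring care—rather than a genuine obstacle—is the decoupling: I must confirm that no surviving term in $\hat{f}$, $\hat{V}$ or $\hat{u}^{*}_{t}$ secretly depends on $h_{3}$ after the substitution, and that the admissibility verification underpinning Theorem \ref{thm4.2} (the moment bounds of Lemma \ref{lemma4.1} and the conditions of Definition \ref{definition4.1}) is unaffected by removing the cubic term, since those bounds were established for the full wealth process and never use $\phi_{0}>0$. Granting this, the corollary is immediate and recovers \cite[Proposition 7]{Pun2018}.
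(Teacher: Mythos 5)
Your proposal is correct and follows essentially the same route as the paper, which presents Corollary \ref{cor2} as a direct specialization of Theorem \ref{thm4.2} obtained by setting $\phi_{0}=0$ in $f(t)$, in the ODE system \eqref{eq30}--\eqref{eq31}, and in the expressions \eqref{eq35}--\eqref{eq36}. Your additional checks (the decoupling of $h_{3}$, the identity $\delta_{2}=-\gamma_{0}\hat{h}_{2}(t)/w$ giving $\delta_{3}=\gamma_{0}\hat{h}_{2}>0$ automatically) match what the paper itself observes in Section 5 and in Remark \ref{remark5}.
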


\begin{cor}\label{cor3}
 If the investor is ambiguity-neutral and does not have skewness preference, which means that $\xi\downarrow0$ and $\phi_{0}=0$, then the equilibrium value function and the equilibrium investment strategy reduce to
\begin{equation}\label{eq43}
  \bar{V}(t,w)=\left[\bar{h}_{1}(t)-\frac{\gamma_{0}}{2}\left(\bar{h}_{2}(t)-\bar{h}_{1}^{2}(t)\right)\right]w
\end{equation}
and
\begin{equation}\label{eq44}
   \bar{u}_{t}^{\ast}=\Sigma_{t}^{-1}\beta_{t}\bar{f}(t)w,\\
\end{equation}
respectively, where
\begin{equation}\label{eq45}
\left\{\begin{aligned}
   \bar{h}_{1}(t)=&e^{\int^{T}_{t}\left[r_{s}+\Theta_{s}\bar{f}(s)\right]\mathrm{d}s},\\
   \bar{h}_{2}(t)=&e^{\int^{T}_{t}\left[2r_{s}+2\Theta_{s}\bar{f}(s)+\Theta_{s}\bar{f}^{2}(s)\right]\mathrm{d}s}
\end{aligned}\right.
\end{equation}
and $$\bar{f}(t)=\frac{\bar{h}_{1}(t)+\gamma_{0}\left(\bar{h}_{1}^{2}(t)-\bar{h}_{2}(t)\right)}{\gamma_{0}\bar{h}_{2}(t)}.$$
\end{cor}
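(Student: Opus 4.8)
The plan is to obtain this corollary purely as a specialization of Theorem \ref{thm4.2}, carried out in two stages corresponding to the two hypotheses $\phi_{0}=0$ and $\xi\downarrow0$. First I would set $\phi_{0}=0$ throughout the formulas $\eqref{eq35}$ and $\eqref{eq36}$. Every term carrying the factor $\phi_{0}$ then drops out, so that $h_{3}$ decouples from the system and influences neither the value function nor the strategy; the closed forms $\eqref{eq31}$ collapse to expressions for $h_{2}$, $g_{1}$ and $h_{1}$ involving only $\gamma_{0}$, which coincide with the quantities $\hat h_{2}$, $\hat g_{1}$, $\hat h_{1}$ of Corollary \ref{cor2}, and the integral equation $\eqref{eq32}$ reduces to the $\phi_{0}=0$ defining relation for $\hat f$.

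Next I would let $\xi\downarrow0$. The two structural effects are that $(\xi+1)^{2}\to1$ and, crucially, that the prefactor $\tfrac{\xi\Theta_{s}f^{2}(s)}{2(\xi+1)^{2}}$ in the last line of $\eqref{eq31}$ tends to $0$; the latter forces the integral correction in $h_{1}$ to vanish, so $h_{1}$ and $g_{1}$ share a common limit, which I denote $\bar h_{1}$ as in $\eqref{eq45}$, while $h_{2}\to\bar h_{2}$. In parallel, the strategy prefactor $\tfrac{1}{\xi+1}\to1$ and the worst-case drift $q^{S\ast}_{t}=-\tfrac{\xi}{\xi+1}\sigma_{t}'\Sigma_{t}^{-1}\beta_{t}\to0$, which is exactly the assertion $\mathbb{Q}\to\mathbb{P}$, $D_{t,x}(\mathbb{Q}\parallel\mathbb{P})\to0$ that underlies the ambiguity-neutral reading of the hypothesis.

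To pin down the limiting denominator function, rather than argue about interchanging a limit with the fixed point I would verify directly that the candidate $\bar f(t)=\bigl[\bar h_{1}(t)+\gamma_{0}(\bar h_{1}^{2}(t)-\bar h_{2}(t))\bigr]/(\gamma_{0}\bar h_{2}(t))$ solves the $\xi=0$, $\phi_{0}=0$ specialization of $\eqref{eq32}$. The algebraic engine is the identity $\bar h_{2}(t)=\bar h_{1}^{2}(t)\,e^{\int_{t}^{T}\Theta_{s}\bar f^{2}(s)\mathrm{d}s}$, read off from $\eqref{eq45}$, which turns the surviving factor $e^{\int_{t}^{T}2[r_{s}+\Theta_{s}\bar f(s)]\mathrm{d}s}\bigl(1-e^{\int_{t}^{T}\Theta_{s}\bar f^{2}(s)\mathrm{d}s}\bigr)$ into $\bar h_{1}^{2}(t)-\bar h_{2}(t)$; the right-hand side of the reduced equation then simplifies to $\gamma_{0}^{-1}\bar h_{2}^{-1}(t)[\bar h_{1}(t)+\gamma_{0}(\bar h_{1}^{2}(t)-\bar h_{2}(t))]$, which is $\bar f(t)$. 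By the uniqueness furnished by Theorem \ref{thm4.1} applied in this degenerate regime, $\bar f$ is the correct solution, and substituting $\xi=0$, $\phi_{0}=0$ together with $h_{1}\to\bar h_{1}$, $h_{2}\to\bar h_{2}$, $g_{1}\to\bar h_{1}$, $f\to\bar f$ into $\eqref{eq35}$ and $\eqref{eq36}$ reproduces $\eqref{eq43}$ and $\eqref{eq44}$.

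The hard part will be the legitimacy of passing to the limit $\xi\downarrow0$ inside the nonlinearly coupled system: since $f$ is only defined implicitly through $\eqref{eq32}$, convergence of the data does not automatically give convergence of the solution. This is precisely why I favour the direct-verification route of the previous paragraph, which exhibits $\bar f$ as an exact solution of the degenerate integral equation and invokes uniqueness, thereby sidestepping any continuous-dependence-on-parameters argument; the remaining steps are then only the elementary substitutions and cancellations indicated above.
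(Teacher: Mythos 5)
Your proposal is correct and is essentially the paper's (implicit) argument: the paper states Corollary \ref{cor3} without a separate proof, as the direct specialization of Theorem \ref{thm4.2} in which $\phi_{0}=0$ kills all skewness terms and $\xi=0$ both sends the prefactor $\tfrac{1}{\xi+1}$ to $1$ (with ${q^{S}_{t}}^{\ast}\to0$, hence $D_{t,x}(\mathbb{Q}\parallel\mathbb{P})=0$) and annihilates the integral correction in $h_{1}$ in \eqref{eq31}, so that $h_{1}=g_{1}=\bar h_{1}$, $h_{2}=\bar h_{2}$, and \eqref{eq35}--\eqref{eq36} collapse to \eqref{eq43}--\eqref{eq44} with \eqref{eq45}. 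Your additional step --- using the identity $\bar h_{2}(t)=\bar h_{1}^{2}(t)\,e^{\int_{t}^{T}\Theta_{s}\bar f^{2}(s)\mathrm{d}s}$ to verify that $\bar f$ solves the $\xi=0$, $\phi_{0}=0$ specialization of \eqref{eq32} and invoking the uniqueness argument of Theorem \ref{thm4.1}, whose proof goes through unchanged in this degenerate regime --- is sound and correctly supplies the passage-to-the-limit justification that the paper leaves tacit, a refinement of the same route rather than a different one.
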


\begin{remark}
If there exists only one risky asset in the financial market, then Corollary \ref{cor3} is reduced to Theorem 4.6 in \cite{Bjork2014}.
\end{remark}

\section{Numerical experiments}
In this section, we provide some numerical experiments to reveal the impacts of model parameters on the robust equilibrium investment strategy and utility losses from ignoring skewness preference and model uncertainty. Assume that there exist a risk-free asset and a risky asset in the financial market for simplicity. The basic values of model parameters are mainly referred to \cite{Mu2019, Wang2021} and are given as follows: the investment horizon is $T=5$ (years), the risk-free interest rate $r=0.05$, the risk aversion coefficient $\gamma_{0}=2$, the skewness preference parameter $\phi_{0}=0.50$, the ambiguity averse parameter $\xi=1$,  the initial wealth at time $t=0$ is $w_{0}=4$, $\mu=0.15$ and $\sigma=0.25$. Without loss of generality, we focus on our analysis at time $t=0$ for convenience. Unless otherwise specified, we change the value of one parameter in each figure below and investigate the sensitivity of the robust equilibrium investment strategy and utility losses with respect to (w.r.t.) the variation of that parameter. Moreover, we illustrate the condition in Remark \ref{remark5} can be satisfied for the values of $\delta_{3}$ under different settings. Note that the proportion invested in the risky asset is independent of the initial wealth value as shown in Remark \ref{remark4} (iii). Thus, similar to the studies \cite{Bi2019, Wang2021, Wang2021QF}, we discuss the impacts of the initial wealth value on the robust equilibrium investment strategy (i.e., the amount) instead of the impacts of the initial wealth value on the proportion in the sequel. The implementation is done with MATLAB R2017b.

\subsection{Impacts of model parameters on the robust equilibrium investment strategy}
\begin{figure}[htbp]
	\begin{minipage}[t]{0.5\linewidth}
		\centering
		\includegraphics[width=3.0in]{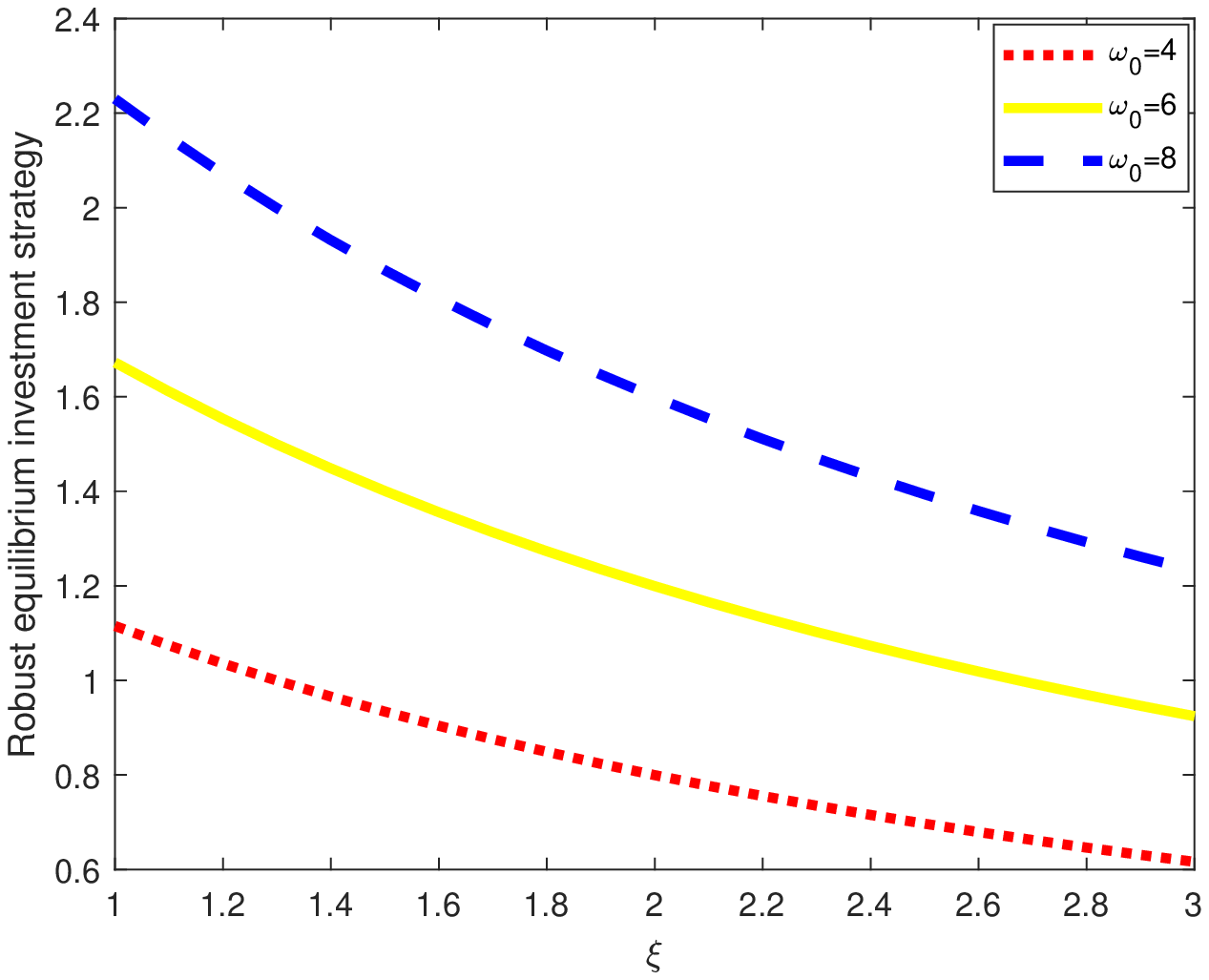}
        \caption*{(a)\;the impacts of $w_0$ and $\xi$ on $u^{\ast}$}
	\end{minipage}%
	\begin{minipage}[t]{0.5\linewidth}
		\centering
		\includegraphics[width=3.0in]{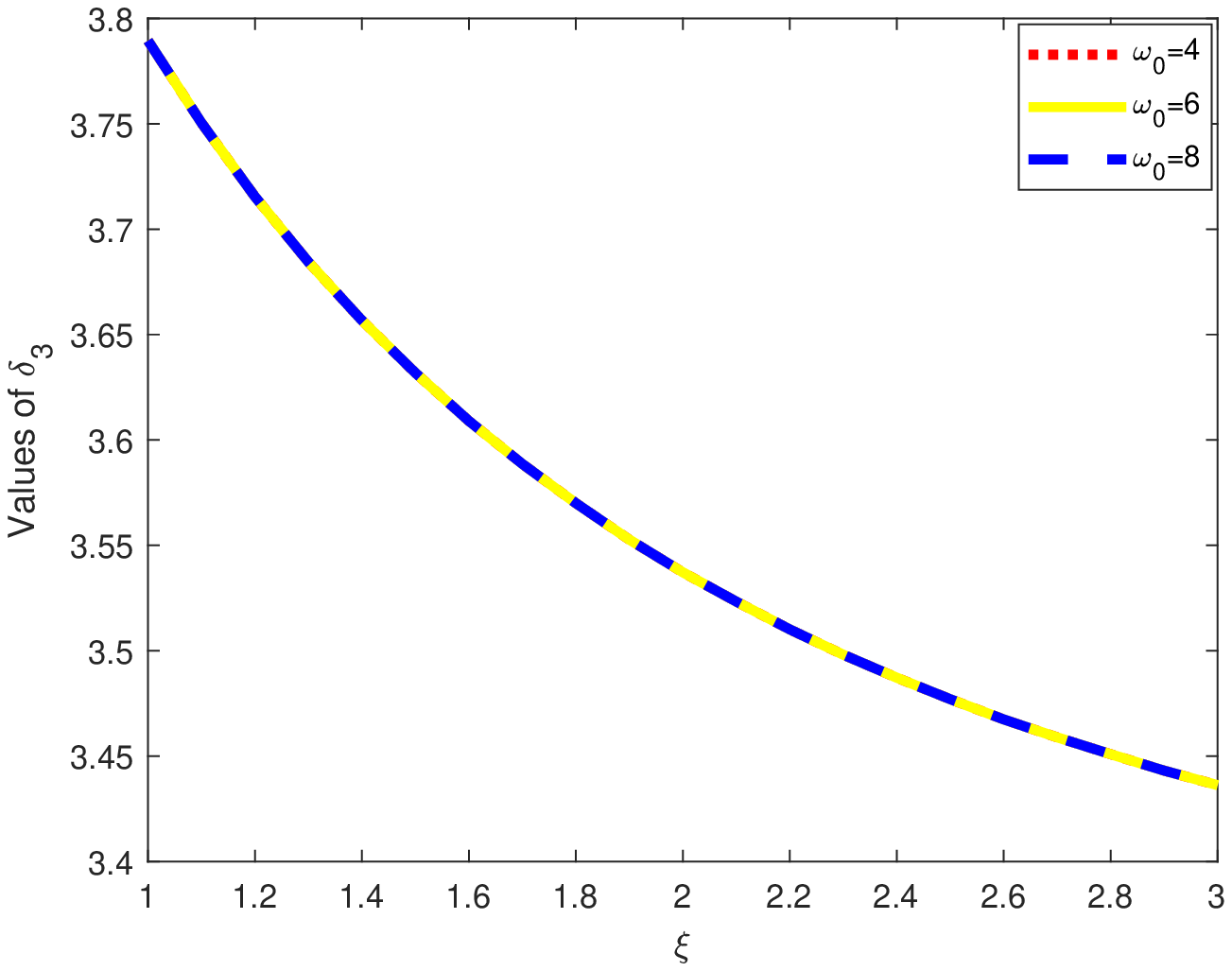}
        \caption*{(b)\;the corresponding values of $\delta_3$}
	\end{minipage}
    \caption{The impacts of $w_0$ and $\xi$ on the robust equilibrium investment strategy $u^{\ast}$ and values of $\delta_3$.}
	\label{fig1}
\end{figure}

Figure \ref{fig1} (a) shows the impacts of the initial wealth $w_0$ and the ambiguity averse parameter $\xi$ on the robust equilibrium investment strategy $u^{\ast}$  in the equation \eqref{eq36}. From Figure \ref{fig1} (a), we find that $u^{\ast}$ increases w.r.t. $w_0$, but decreases w.r.t. $\xi$. With the increase of $w_0$, the investor has greater ability to invest and take corresponding risks. However, along with the increase of $\xi$, the investor becomes more averse to the ambiguity in the financial market when the initial wealth is fixed. Thus, she/he would reduce her/his investment in the risky asset. It is also observed that $u^{\ast}$ decreases more sharply w.r.t. $\xi$ when $w_0$ is larger, which implies that rich investors should pay more attention to the ambiguity.

\begin{figure}[htbp]
	\begin{minipage}[t]{0.5\linewidth}
		\centering
		\includegraphics[width=3.0in]{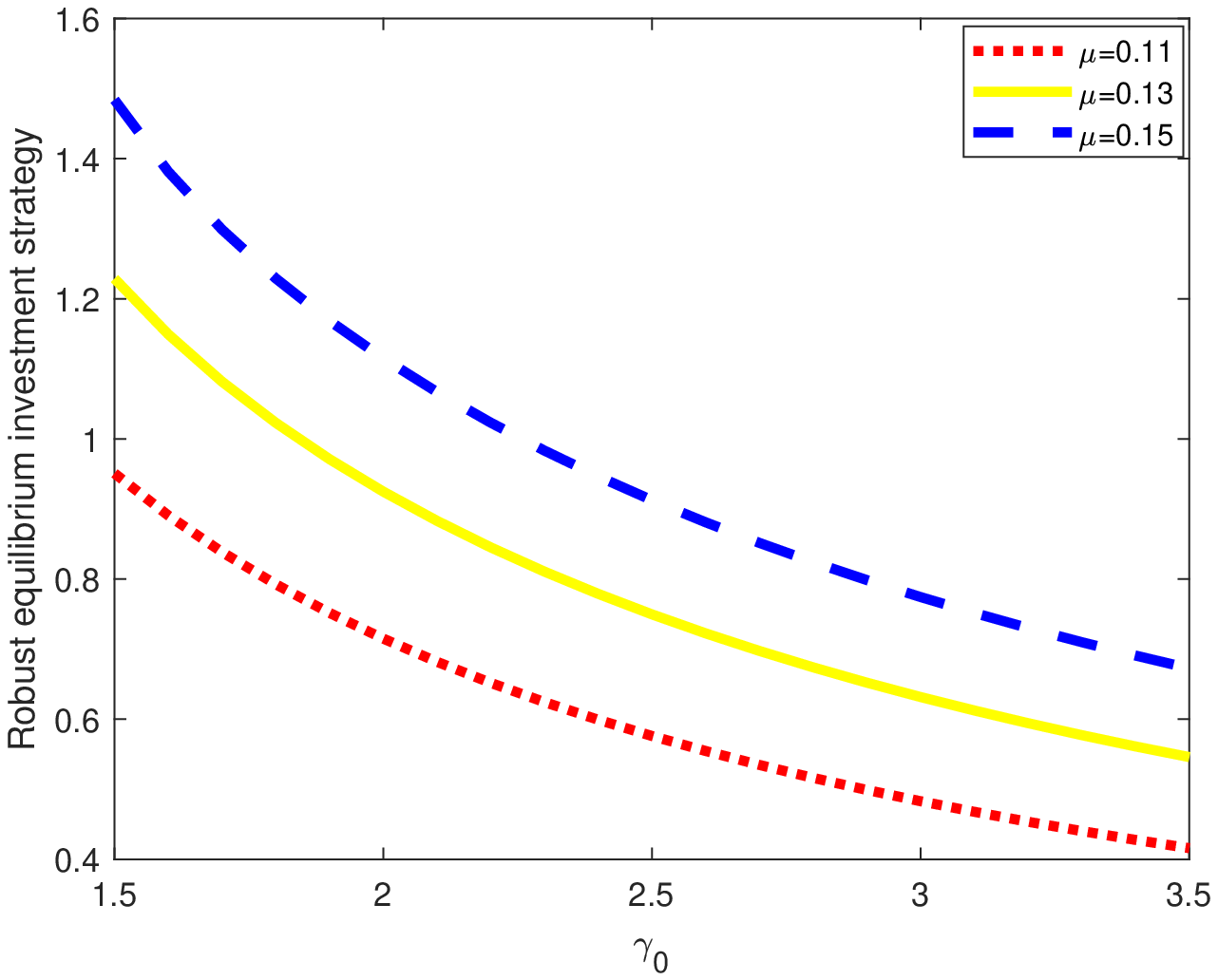}
        \caption*{(a)\;the impacts of $\mu$ and $\gamma_{0}$ on $u^{\ast}$}
	\end{minipage}%
	\begin{minipage}[t]{0.5\linewidth}
		\centering
		\includegraphics[width=3.0in]{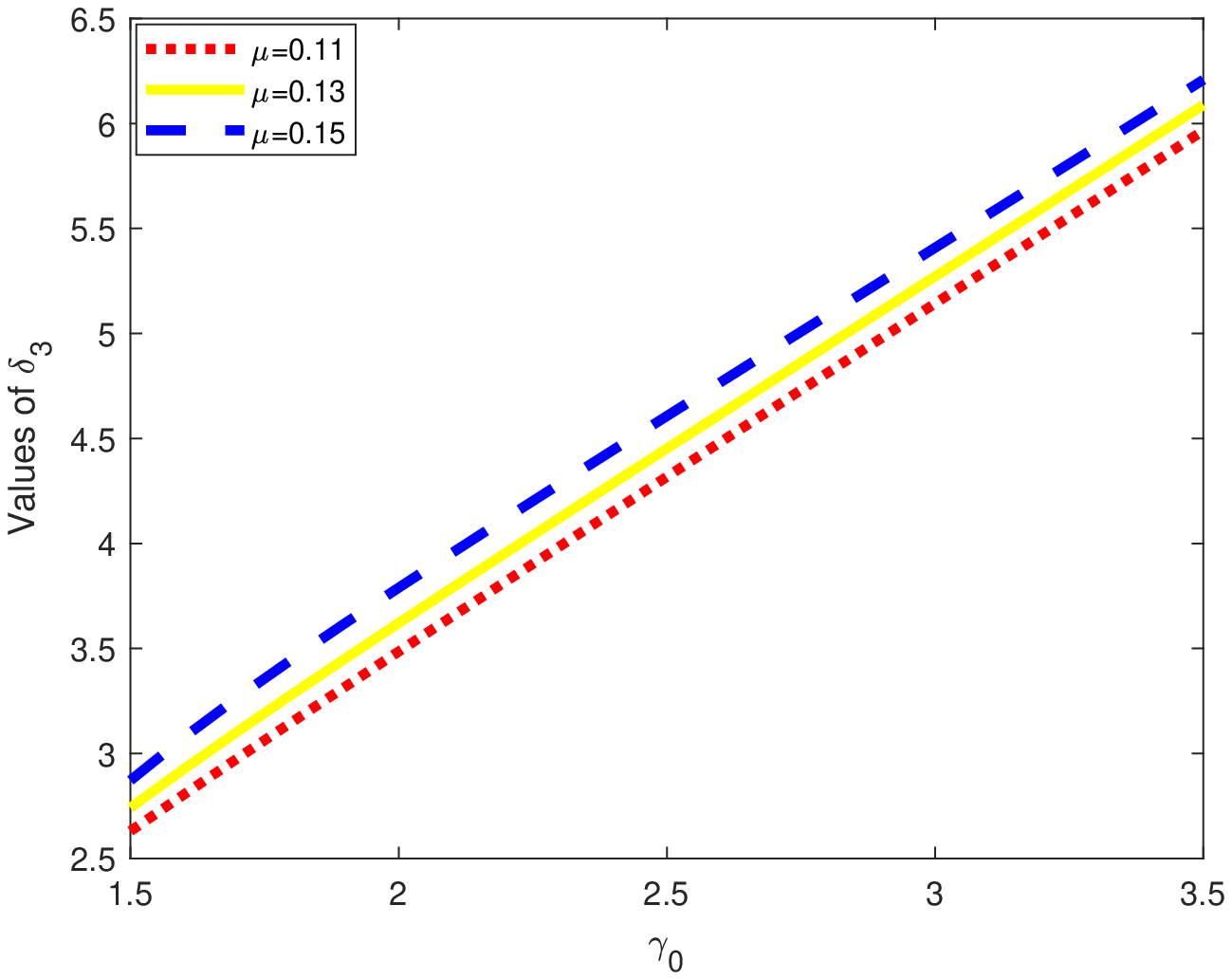}
        \caption*{(b)\;the corresponding values of $\delta_3$}
	\end{minipage}
    \caption{The impacts of $\mu$ and $\gamma_{0}$ on the robust equilibrium investment strategy $u^{\ast}$ and values of $\delta_3$.}
	\label{fig2}
\end{figure}

Figure \ref{fig2} (a) displays the variations of the robust equilibrium investment strategy $u^{\ast}$ with the risky asset's parameter $\mu$ and the risk aversion coefficient $\gamma_{0}$. It is easy to see that $u^{\ast}$ increases w.r.t. $\mu$, but decreases w.r.t. $\gamma_{0}$. Since $\mu$ denotes the expected return rate of the risky asset, the growth of $\mu$ means higher returns, which attracts the investor to invest more in the risky asset to pursue more interests. On the other hand, when $\gamma_{0}$ becomes larger, the investor is more risk averse about the risky asset. So the investor reduces the investment in the risky asset. It is also interesting to see that $u^{\ast}$ decreases more quickly w.r.t. $\gamma_{0}$ when $\mu$ is larger. This indicates that $\gamma_{0}$ has a greater impact on the investor's decisions when $\mu$ is larger.

\begin{figure}[htbp]
	\begin{minipage}[t]{0.5\linewidth}
		\centering
		\includegraphics[width=3.0in]{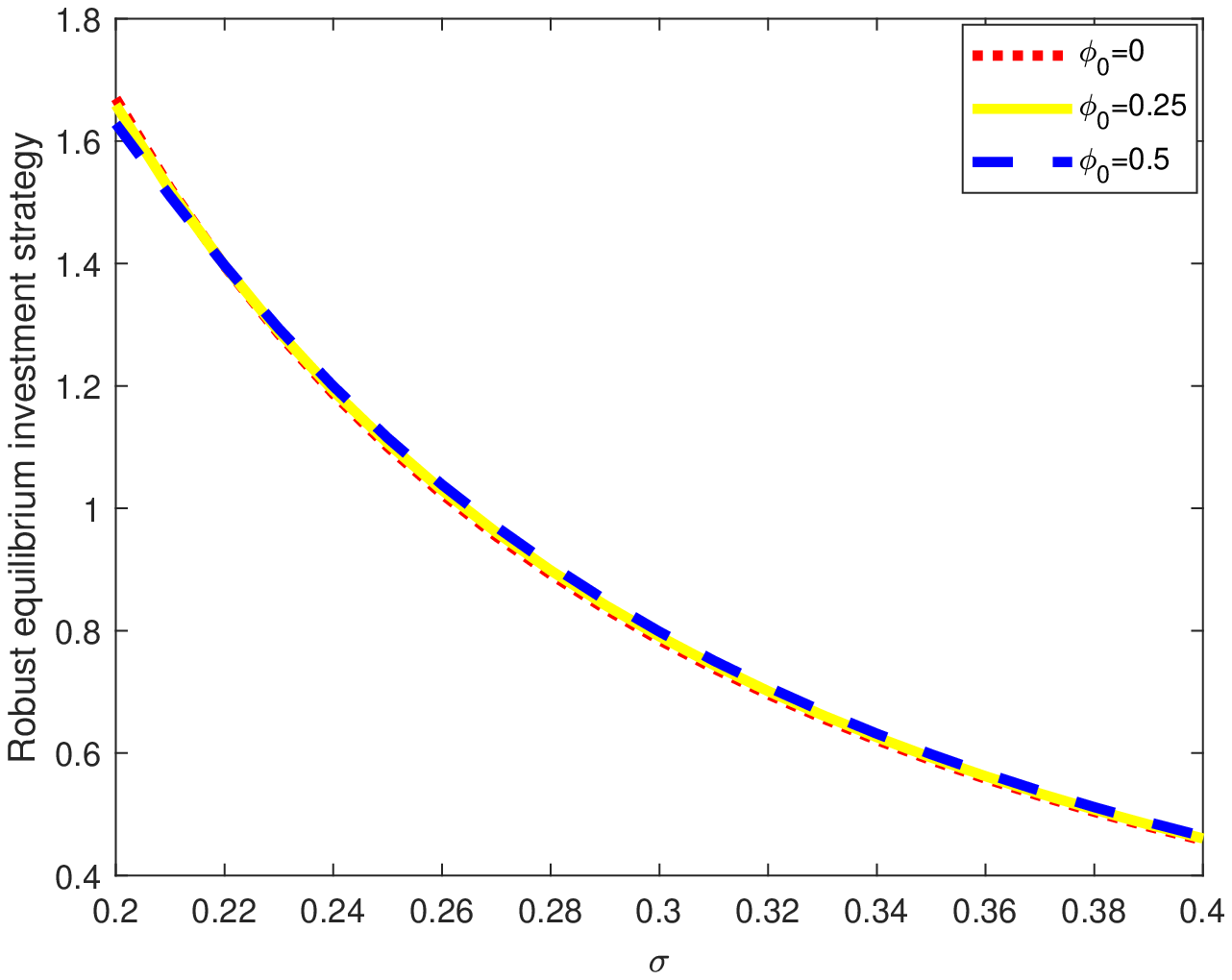}
        \caption*{(a)\;the impacts of $\phi_{0}$ and $\sigma$ on $u^{\ast}$}
	\end{minipage}%
	\begin{minipage}[t]{0.5\linewidth}
		\centering
		\includegraphics[width=3.0in]{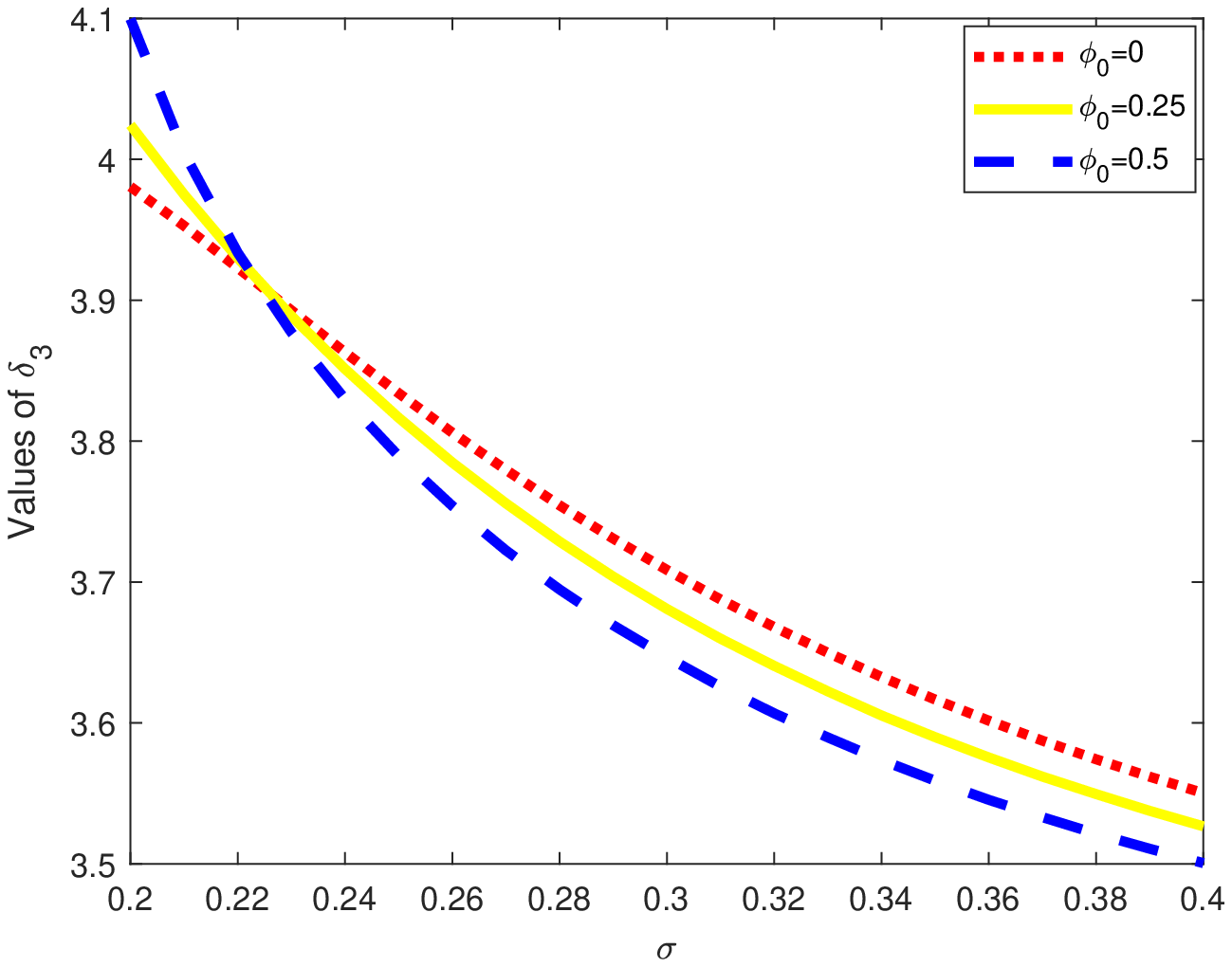}
        \caption*{(b)\;the corresponding values of $\delta_3$}
	\end{minipage}
    \caption{The impacts of $\phi_{0}$ and $\sigma$ on the robust equilibrium investment strategy $u^{\ast}$ and values of $\delta_3$.}
	\label{fig3}
\end{figure}

Figure \ref{fig3} (a) depicts the impacts of the skewness preference parameter $\phi_{0}$ and the volatility coefficient of the risky asset $\sigma$ on the robust equilibrium investment strategy $u^{\ast}$. We see that $u^{\ast}$ decreases evidently w.r.t. $\sigma$. As $\sigma$ increases, the price of the risky asset fluctuates greater and the financial market is more unstable. Therefore, the investor tends to reduce the investment in the risky asset to control risk. As the impacts of $\phi_{0}$ on $u^{\ast}$ are not obvious in Figure \ref{fig3} (a), we provide Figure \ref{fig12} to show the sensitivity of $u^{\ast}$ w.r.t. $\phi_{0}$. It is interesting to see that the difference between equilibrium investment strategies under different skewness preference generates $U$-shape lines with the increase of the volatility of the risky asset. First, we analyze the case when $\sigma\geq 0.22$. Overall, the difference between robust equilibrium investment strategies under different skewness preference is positive, which reflects that the mean-variance-skewness investor invests more wealth in the risky asset than the mean-variance investor. Furthermore, the greater the skewness preference, the greater the investment in risky asset. It is also observed that the difference first increases and then decreases as $\sigma$ increases. Second, when $\sigma<0.22$, the situation is totally different. Specially, the difference is negative, which means that the mean-variance-skewness investor invests less wealth in the risky asset than the mean-variance investor. Moreover, the greater the skewness preference, the smaller the investment in risky asset. It also reflects that the absolute value of the difference decreases when $\sigma$ increases. These phenomena have not been found in \cite{Mu2019}. We would like to point out that at first glance the difference between the comparing strategies is small, one can find the equilibrium strategy heavily depends on the initial wealth value in the equation \eqref{eq36}. When the magnitude of initial wealth is larger, the difference would be more obvious. Thus, it is meaningful and necessary to consider the skewness preference for investment in the financial market.

\begin{figure}[htbp]
    \centering
    {
        \begin{minipage}[t]{0.5\textwidth}
            \centering
            \includegraphics[width=1\textwidth]{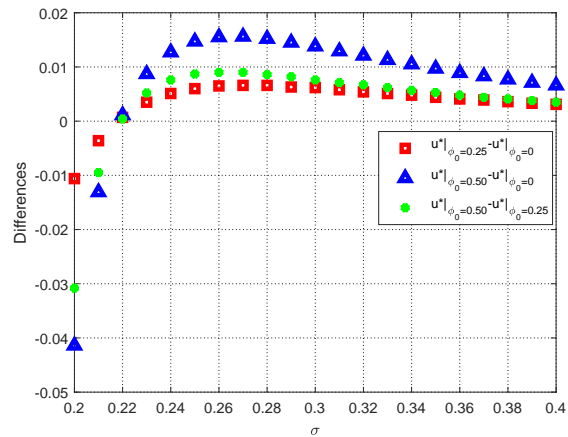}
        \end{minipage}
    }

    \caption{The difference between robust equilibrium investment strategies under different $\phi_{0}$ and $\sigma$.}
    \label{fig12}
\end{figure}

In Figures \ref{fig1} (b), \ref{fig2} (b) and \ref{fig3} (b), we also present the values of $\delta_3$ for each case. It is easy to see that $\delta_3$ are positive under different settings, which implies that the condition in Remark \ref{remark5} is satisfied, that is to say, our results are effective.

\subsection{Impacts of model parameters on the utility loss functions}
In what follows, we adopt numerical experiments to study the impacts of model parameters on the utility losses from ignoring skewness preference and model uncertainty for the ambiguity-averse investor.

First, we consider the utility loss from ignoring skewness preference, which is defined as \cite{Zeng2016}
$$
L_{1}(t)=1-\frac{\hat{V}(t,w)}{V(t,w)},
$$
where $\hat{V}(t,w)$ and $V(t,w)$ are given by $\eqref{eq41}$ and $\eqref{eq35}$, respectively.

\begin{figure}[htbp]
	\begin{minipage}[t]{0.5\linewidth}
		\centering
		\includegraphics[width=3.0in]{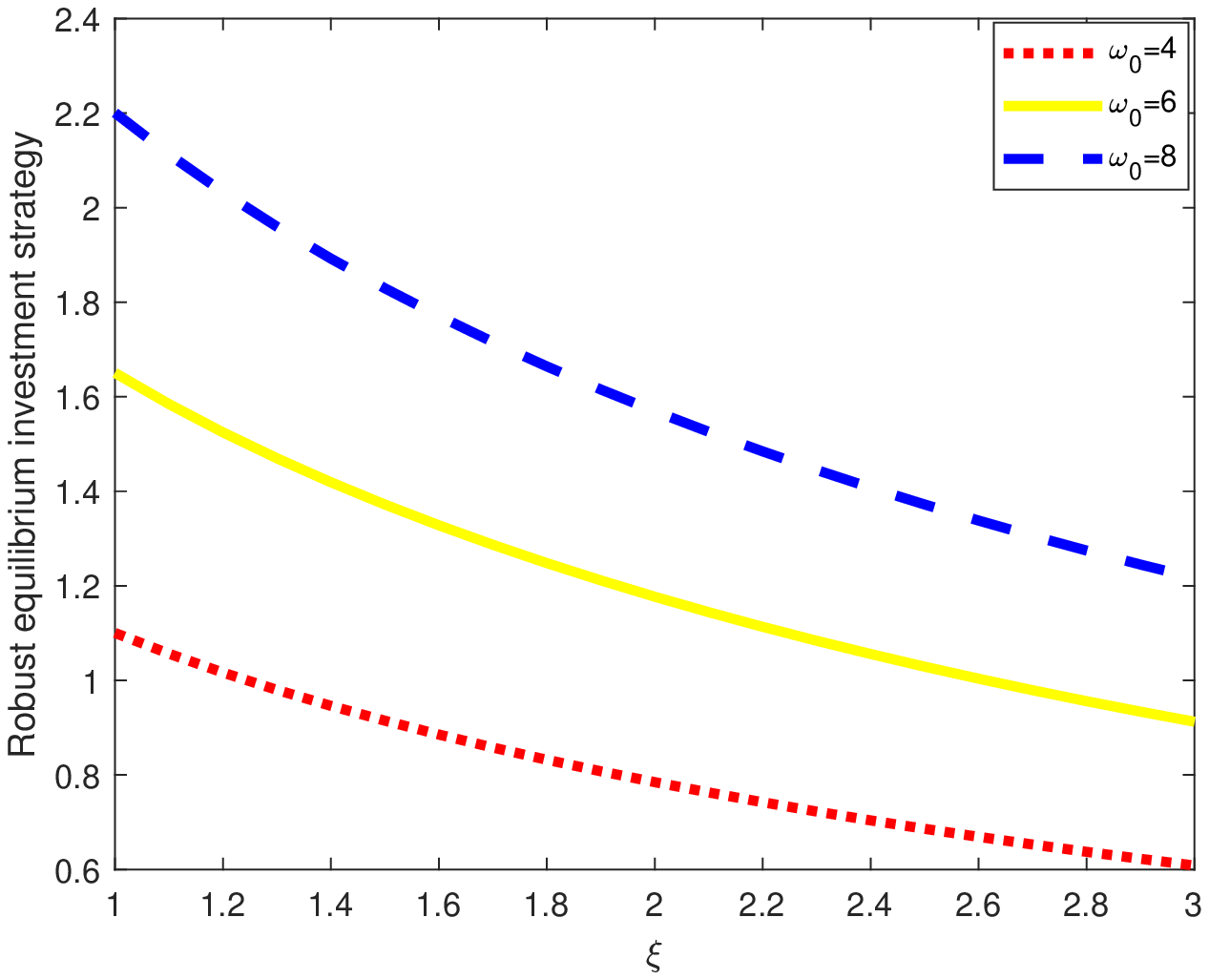}
        \caption*{(a)\;the impacts of $w_0$ and $\xi$ on $\hat{u}^{\ast}$}
	\end{minipage}%
	\begin{minipage}[t]{0.5\linewidth}
		\centering
		\includegraphics[width=3.0in]{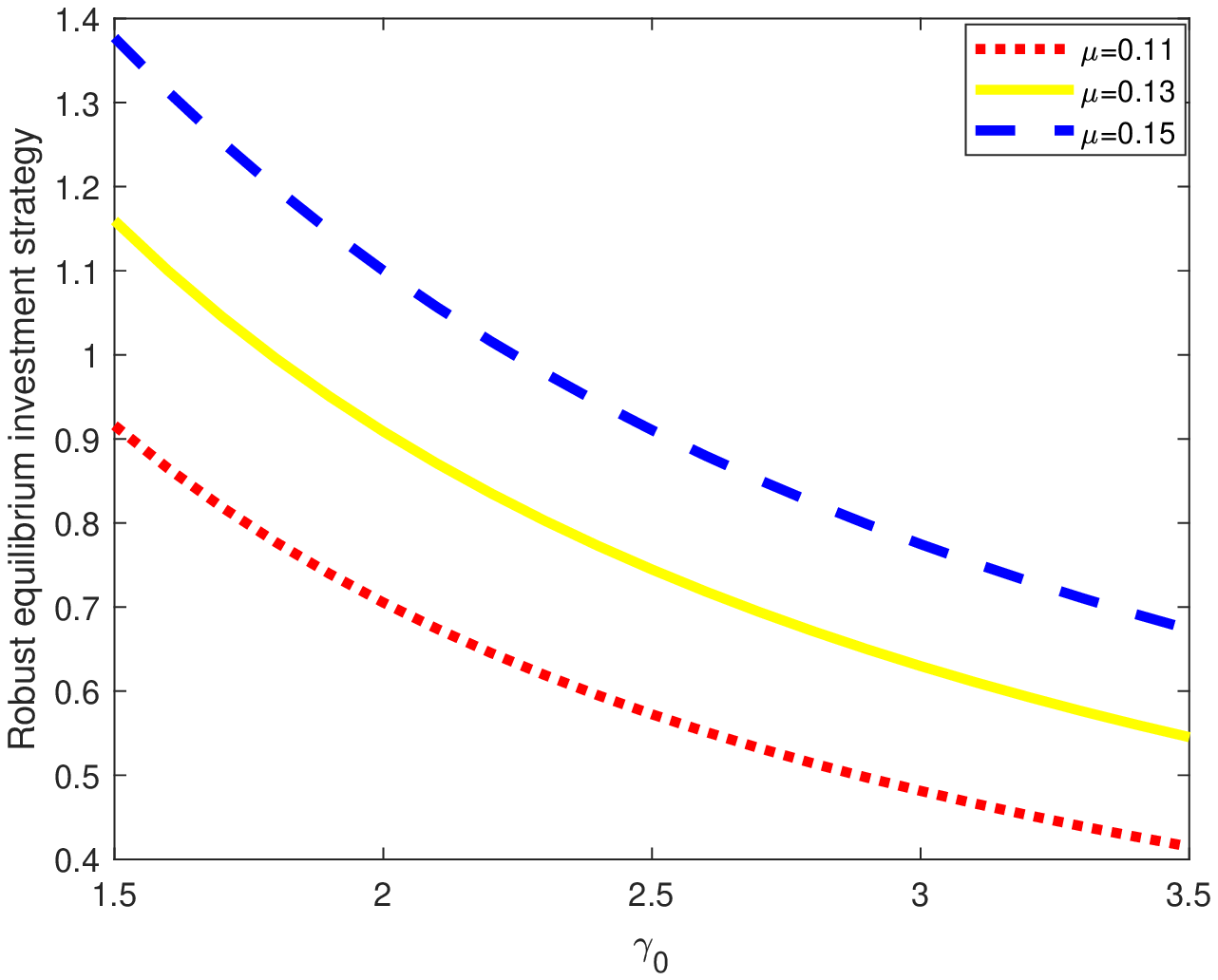}
        \caption*{(b)\;the impacts of $\mu$ and $\gamma_{0}$ on $\hat{u}^{\ast}$}
	\end{minipage}
    \caption{The impacts of $w_0$, $\xi$, $\mu$ and $\gamma_{0}$ on the robust equilibrium investment strategy $\hat{u}^{\ast}$.}
	\label{fig4}
\end{figure}

\begin{figure}[htbp]
	\begin{minipage}[t]{0.5\linewidth}
		\centering
		\includegraphics[width=3.0in]{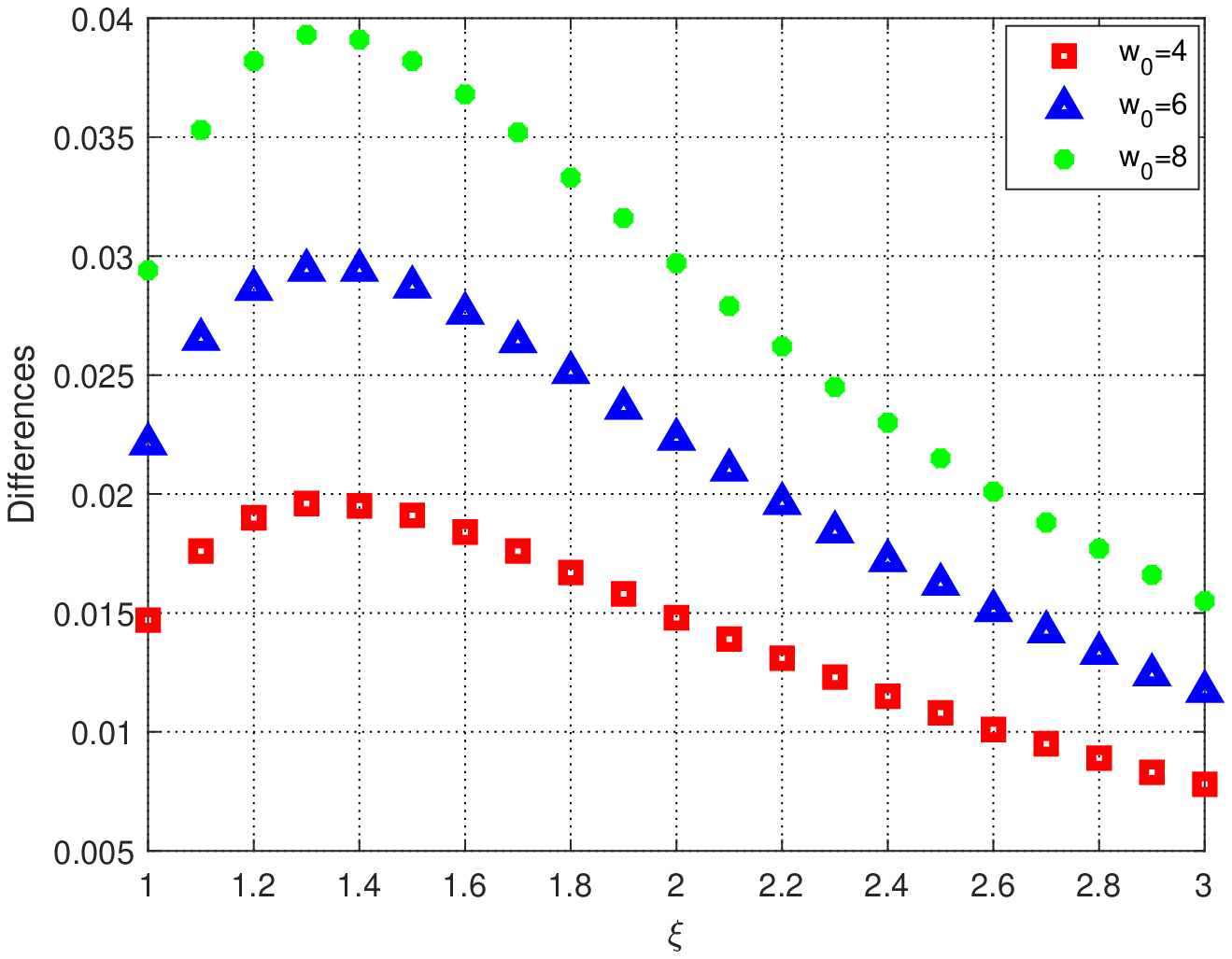}
        \caption*{(a)\;the differences under $w_0$ and $\xi$}
	\end{minipage}%
	\begin{minipage}[t]{0.5\linewidth}
		\centering
		\includegraphics[width=3.0in]{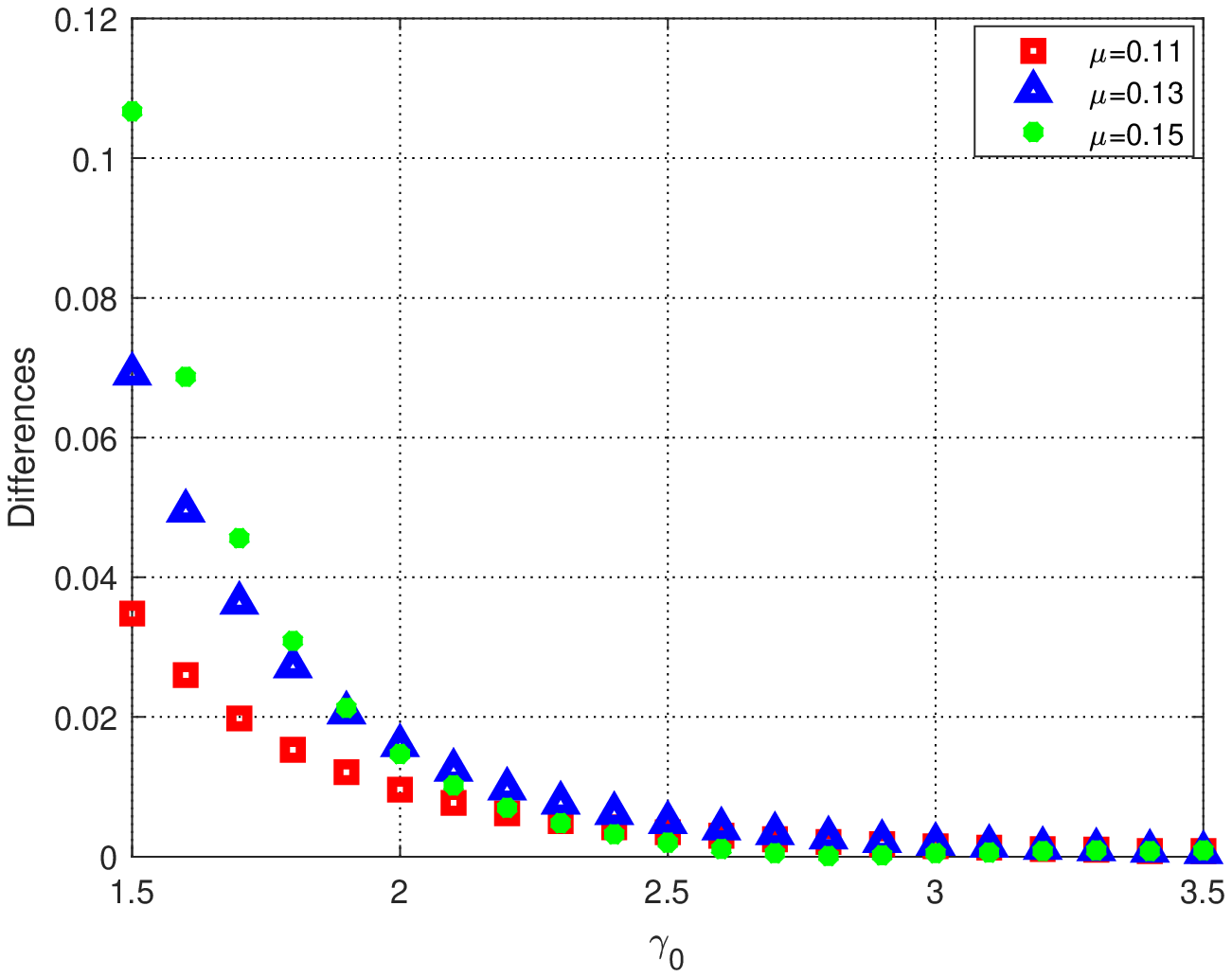}
        \caption*{(b)\;the differences under $\mu$ and $\gamma_{0}$}
	\end{minipage}
    \caption{The difference between robust equilibrium investment strategies under $w_0$, $\xi$, $\mu$ and $\gamma_{0}$.}
	\label{fig5}
\end{figure}

Compared with Figures \ref{fig1} and \ref{fig2}, Figure \ref{fig4} shows that the robust equilibrium investment strategy $\hat{u}_{t}^{\ast}$ in the equation \eqref{eq42} under the mean-variance criterion changes similarly w.r.t. $w_0$, $\xi$, $\mu$ and $\gamma_{0}$ than $u_{t}^{\ast}$. Figure \ref{fig5} presents the difference $(u_{t}^{\ast}-\hat{u}_{t}^{\ast})$ between the robust equilibrium investment strategy $u_{t}^{\ast}$ under the mean-variance-skewness criterion and the robust equilibrium investment strategy $\hat{u}_{t}^{\ast}$ under the mean-variance criterion with different values of $w_0$, $\xi$, $\mu$ and $\gamma_{0}$. We can find that the difference is always positive under different values of $w_0$, $\xi$, $\mu$ and $\gamma_{0}$. That is to say, $u_{t}^{\ast}$ is larger than $\hat{u}_{t}^{\ast}$. As shown in Figure \ref{fig5} (a), the difference between robust equilibrium investment strategies with different skewness preferences generates $U$-shape lines with the increase of $\xi$ when $w_0$ is fixed. The difference becomes more obvious when the initial wealth takes larger values. Additionally, the difference first increases from $\xi=1$ to its peak around $\xi=1.3$ and then decreases thereafter. A possible explanation is that the introduction of skewness would stimulate investment and thus offset the investment reduction because of ambiguity aversion, i.e., the skewness preference would slow down the reduction of investment owing to ambiguity aversion. Specially, compared with the case of only considering ambiguity-aversion, the effect of slowing down by skewness is more obvious when the skewness preference dominates the ambiguity aversion, which corresponds to the increase of the difference in Figure \ref{fig5} (a). When the ambiguity aversion parameter becomes larger, the effect of slowing down by the skewness preference weakens, which is consistent with the case that the difference decreases. Figure \ref{fig5} (b) displays that the difference between robust equilibrium investment strategies with different skewness preferences decreases w.r.t. $\gamma_{0}$ when $\mu$ is fixed and increases w.r.t. $\mu$ when $\gamma_{0}$ is fixed. If the risk aversion coefficient $\gamma_{0}$ is smaller, the difference is more obvious when $\mu$ takes larger values. However, when $\gamma_{0}$ becomes larger, the difference is very small even if $\mu$ takes larger values. This phenomenon implies that the skewness preference has no impact on the robust equilibrium investment strategy when the risk aversion coefficient is large enough. Furthermore, we would like to point out that when we take $\phi_{0}\equiv0$, it follows from the equation $\eqref{eq39}$ that $\delta_{3}=\gamma_{0}\hat{h}_{2}$ is always positive, which means that the condition in Remark \ref{remark5} is fulfilled. Thus, our results are effective.

\begin{figure}[htbp]
    \centering
    {
        \begin{minipage}[t]{0.5\textwidth}
            \centering
            \includegraphics[width=1\textwidth]{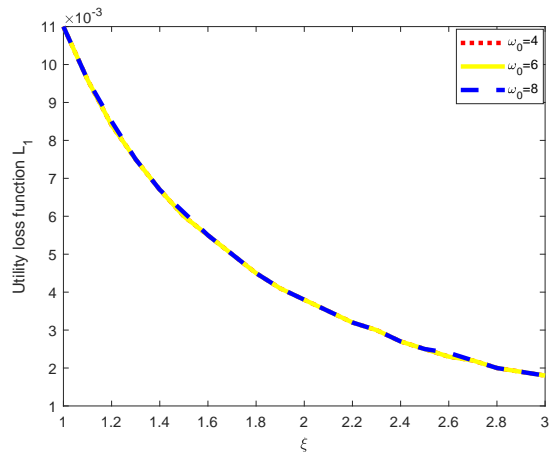}
        \end{minipage}
    }
    \caption{The impacts of $w_0$ and $\xi$ on the utility loss function $L_1$.}
    \label{fig6}
\end{figure}

Figure \ref{fig6} illustrates the effects of $w_0$ and $\xi$ on the utility loss function $L_1$. It follows from equations $\eqref{eq35}$ and $\eqref{eq41}$ that $w_0$ is proportional to $\hat{V}$ and $V$ as well as the expression of $L_1$ is independent of $w_0$. This is consistent with the phenomenon reflected in Figure \ref{fig6}. Moreover, as shown in Figure \ref{fig6}, we find that $L_1$ decreases along with $\xi$. As $\xi$ becomes larger, the investor is more ambiguity-averse and she/he would reduce the investment to avoid the ambiguity whether she/he is in the framework of mean-variance or mean-variance-skewness. As a result, the difference between $\hat{V}$ and $V$ due to the skewness preference becomes smaller. Therefore, the utility loss from ignoring skewness preference $L_1$ decreases w.r.t. $\xi$.

\begin{figure}[htbp]
    \centering
    {
        \begin{minipage}[t]{0.5\textwidth}
            \centering
            \includegraphics[width=1\textwidth]{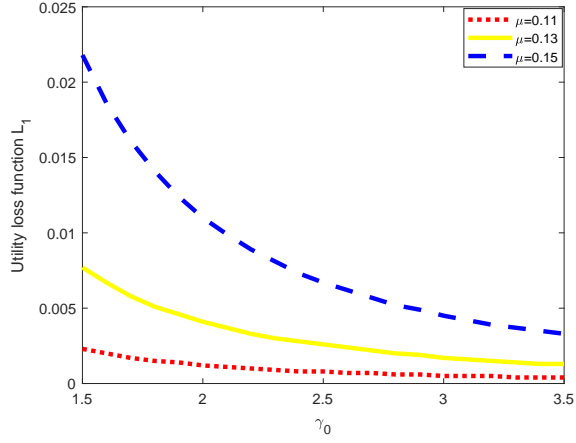}
        \end{minipage}
    }

    \caption{The impacts of $\mu$ and $\gamma_{0}$ on the utility loss function $L_1$.}
    \label{fig7}
\end{figure}

Figure \ref{fig7} demonstrates the impacts of $\mu$ and $\gamma_{0}$ on the utility loss function $L_1$. It shows that the bigger $\mu$, the bigger $L_1$. One can see from Figures \ref{fig2} (a) and \ref{fig4} (b) that the greater the value of $\mu$, the greater the investment of the investor. Thus, the difference between $\hat{V}$ and $V$ due to the skewness preference becomes larger, which leads to that the utility loss from ignoring skewness preference increases. On the other hand, when $\mu$ is fixed under our setting, we find that the utility loss function $L_1$ decreases w.r.t. $\gamma_{0}$. Figures \ref{fig2} (a) and \ref{fig4} (b) presents that the robust equilibrium investment strategies $\hat{u}_{t}^{\ast}$ and $u_{t}^{\ast}$ decrease w.r.t. $\gamma_{0}$. The difference between $\hat{V}$ and $V$ due to the skewness preference becomes smaller and then the utility loss from ignoring skewness preference decreases.

\begin{figure}[htbp]
    \centering
    {
        \begin{minipage}[t]{0.5\textwidth}
            \centering
            \includegraphics[width=1\textwidth]{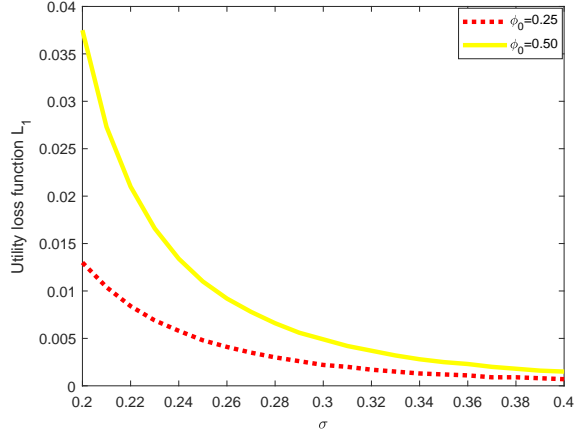}
        \end{minipage}
    }

    \caption{The impacts of $\phi_{0}$ and $\sigma$ on the utility loss function $L_1$.}
    \label{fig8}
\end{figure}

Figure \ref{fig8} discloses that the utility loss function $L_1$ decreases w.r.t. $\sigma$, whereas it increases w.r.t. $\phi_{0}$. It follows from the analysis of Figure \ref{fig3} (a) that the robust equilibrium investment strategy $u_{t}^{\ast}$ decreases evidently w.r.t. $\sigma$ when $\phi_{0}$ is fixed, which results in that the difference between $\hat{V}$ and $V$ becomes smaller. Therefore, the utility loss from ignoring skewness preference decreases along with the volatility of the risky asset.  As $\phi_{0}$ becomes larger, the investor puts more weight on skewness when making investment decisions and hence the utility loss from ignoring skewness preference increases. Thus, it is quite necessary to take the skewness preference into account in the investment decision-making.

Second, we discuss the utility loss from ignoring model uncertainty. Assume that the ambiguity-averse investor does not take the robust equilibrium investment strategy $u_{t}^{\ast}$ given in Theorem \ref{thm4.2}, but adopts strategy as if she/he were an ambiguity-neutral investor, i.e., the ambiguity-averse investor uses the strategy $\widetilde{u}_{t}^{\ast}$ given in Corollary \ref{cor1}. The equilibrium value function for the ambiguity-averse investor following the strategy $\widetilde{u}_{t}^{\ast}$ is defined by
\begin{equation*}
  V_{1}(t,w)=\inf\limits_{\textbf{q}\in\bm{Q}}\; \left\{\mathbb{E}^{\mathbb{Q}}_{t,w}\left[W^{\widetilde{u}_{t}^{\ast}}_{T}\right]-\frac{\gamma_{0}}{2w}Var^{\mathbb{Q}}_{t,w}\left[W^{\widetilde{u}_{t}^{\ast}}_{T}\right]
  +\frac{\phi_{0}}{3w^{2}}Skew^{\mathbb{Q}}_{t,w}\left[W^{\widetilde{u}_{t}^{\ast}}_{T}\right]+\mathbb{E}^{\mathbb{Q}}_{t,w}
\left[\int^{T}_{t}\frac{\textbf{q}'_{s}\textbf{q}_{s}}{2\xi\Phi(s,W_{s}) }\mathrm{d}s\right]\right\},
\end{equation*}
where $W^{\widetilde{u}_{t}^{\ast}}_{T}$ is the terminal wealth of the ambiguity-averse investor under the strategy $\widetilde{u}_{t}^{\ast}$ and $\Phi$ is given in Theorem \ref{thm4.2}. It is noteworthy that $\textbf{q}_{t}$, which characterizes the alternative model, is still determined endogenously and depends on the investment strategy. Different from the previous case of deriving the robust equilibrium investment strategy $u_{t}^{\ast}$, the investment strategy is now pre-specified. According to $\eqref{eq20}$, we can know that $V\geq V_{1}$ with the equality when the pre-specified strategy is taken as $u_{t}^{\ast}$. Similar to the previous derivations, we obtain the equilibrium value function $V_{1}$ under the strategy $\widetilde{u}_{t}^{\ast}$ as
\begin{equation}\label{eq46}
  V_{1}(t,w)=\left[a_{1}(t)-\frac{\gamma_{0}}{2}\left(a_{2}(t)-b_{1}^{2}(t)\right)+\frac{\phi_{0}}{3}
  \left(2b_{1}^{3}(t)-3b_{1}(t)a_{2}(t)+a_{3}(t)\right)\right]w,
\end{equation}
where $a(t)=\frac{a_{1}(t)+\gamma_{0}\left(b_{1}^{2}(t)-a_{2}(t)\right)+\phi_{0}
\left(a_{3}(t)+2b_{1}^{3}(t)-3b_{1}(t)c_{1}(t)\right)}{\gamma_{0}a_{2}(t)+2\phi_{0}(b_{1}(t)c_{1}(t)-a_{3}(t))}$ and
\begin{eqnarray*}
   a_{2}(t)&=&c_{1}(t)=e^{\int^{T}_{t}\left[2r_{s}+2\Theta_{s}\tilde{f}(s)-\frac{2\xi\Theta_{s}\tilde{f}^{2}(s)}{a(s)}+\Theta_{s}\tilde{f}^{2}(s)\right]\mathrm{d}s},\\
   a_{3}(t)&=&e^{\int^{T}_{t}3\left[r_{s}+\Theta_{s}\tilde{f}(s)-\frac{\xi\Theta_{s}\tilde{f}^{2}(s)}{a(s)}+\Theta_{s}\tilde{f}^{2}(s)\right]\mathrm{d}s},\\
   b_{1}(t)&=&e^{\int^{T}_{t}\left[r_{s}+\Theta_{s}\tilde{f}(s)-\frac{\xi\Theta_{s}\tilde{f}^{2}(s)}{a(s)}\right]\mathrm{d}s},\\
   a_{1}(t)&=&b_{1}(t)+\int^{T}_{t}\frac{\xi \Theta_{s}\tilde{f}^{2}(s)}{2}\left[\gamma_{0} a_{2}(s)+2\phi_{0}(b_{1}(s)a_{2}(s)-a_{3}(s))\right]
   \times e^{\int^{s}_{t}\left[r_{u}+\Theta_{u}\tilde{f}(u)-\frac{\xi\Theta_{u}\tilde{f}^{2}(u)}{a(u)}\right]\mathrm{d}u}\mathrm{d}s
\end{eqnarray*}
with $a_{1}(T)=a_{2}(T)=a_{3}(T)=b_{1}(T)=c_{1}(T)=1$ and $\tilde{f}(t)$ is given in Corollary \ref{remark3}.
Furthermore, the utility loss from ignoring model uncertainty is defined as follows \cite{Zeng2016}
$$
L_{2}(t)=1-\frac{V_{1}(t,w)}{V(t,w)},
$$
where $V_{1}(t,w)$ and $V(t,w)$ are given by $\eqref{eq46}$ and $\eqref{eq35}$, respectively.

We modify the value of parameter $\mu$ to 0.10 and present the values of $\delta_{3}$ for each case in Figures \ref{fig9} (b), \ref{fig10} (b) and \ref{fig11} (b), which shows that the condition in Remark \ref{remark5} is also satisfied and our results are effective.

\begin{figure}[htbp]
	\begin{minipage}[t]{0.5\linewidth}
		\centering
		\includegraphics[width=3.0in]{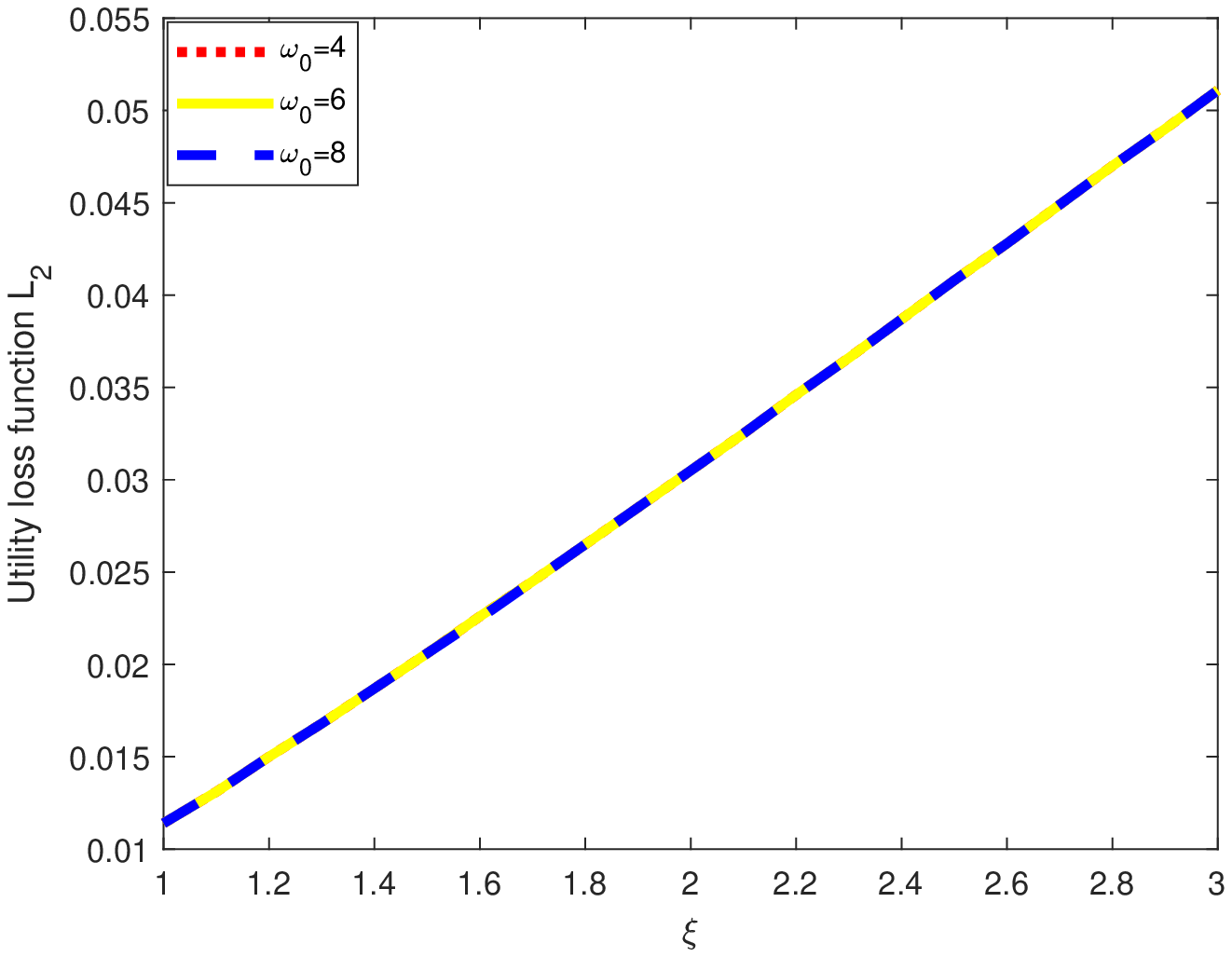}
        \caption*{(a)\;the impacts of $w_0$ and $\xi$ on $L_2$}
	\end{minipage}%
	\begin{minipage}[t]{0.5\linewidth}
		\centering
		\includegraphics[width=3.0in]{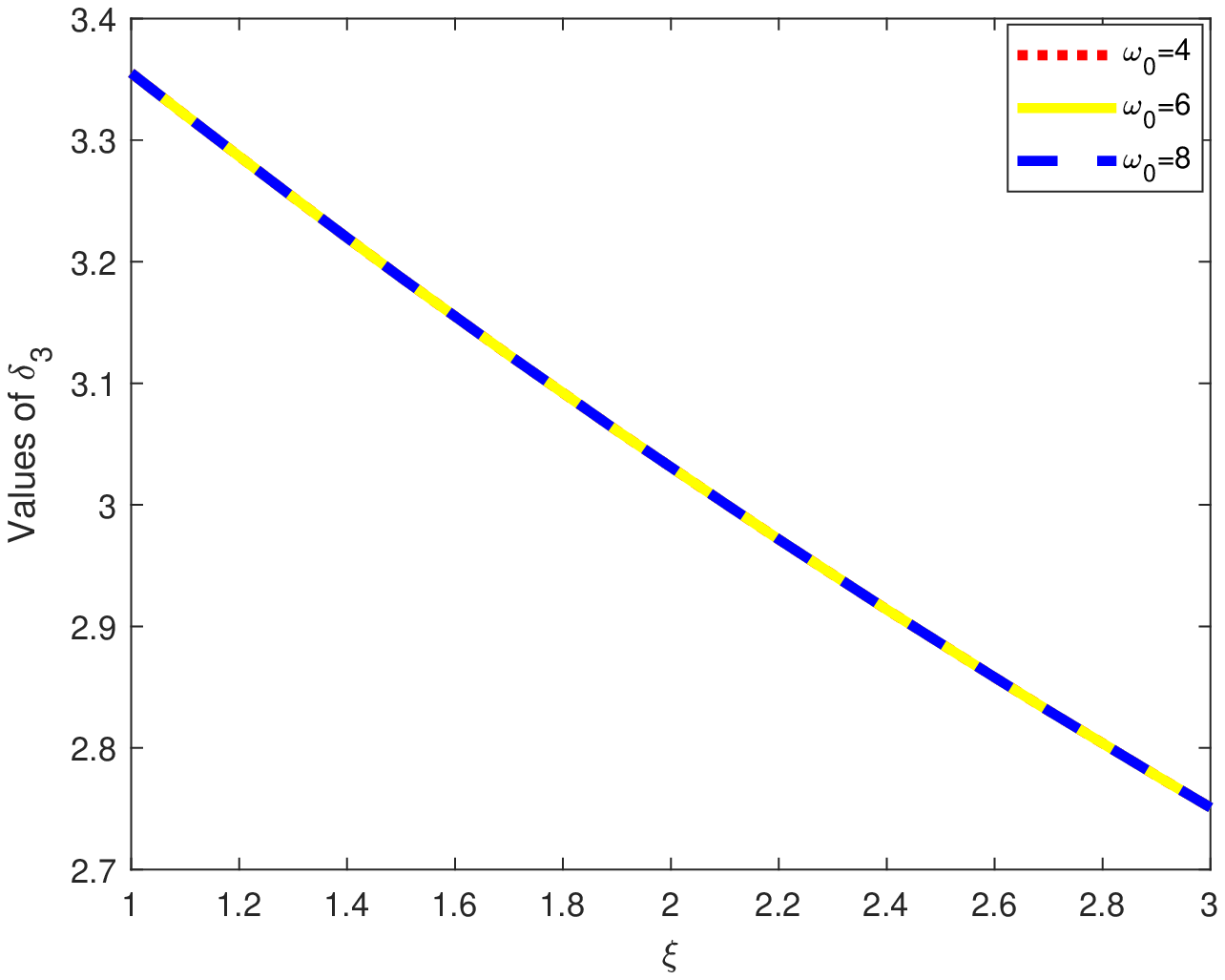}
        \caption*{(b)\;the corresponding values of $\delta_3$}
	\end{minipage}
    \caption{The impacts of $w_0$ and $\xi$ on the utility loss function $L_2$ and values of $\delta_3$.}
	\label{fig9}
\end{figure}

Figure \ref{fig9} (a) discloses the impacts of the initial wealth $w_0$ and the ambiguity aversion parameter $\xi$ on the utility loss function $L_2$. From Figure \ref{fig9} (a), we find that the utility loss function $L_2$ is independent of $w_0$, which is the same with that in Figure \ref{fig6} (a) and can be explained in the same way. On the other hand, we can see that the utility loss function $L_2$ increases w.r.t. $\xi$. This phenomenon is in line with our intuition, if the ambiguity averse investor is more uncertain about the reference measure, then ignoring model uncertainty may result in higher utility losses. Thus, it is quite necessary to study model uncertainty for the portfolio selection problems.

\begin{figure}[htbp]
	\begin{minipage}[t]{0.5\linewidth}
		\centering
		\includegraphics[width=3.0in]{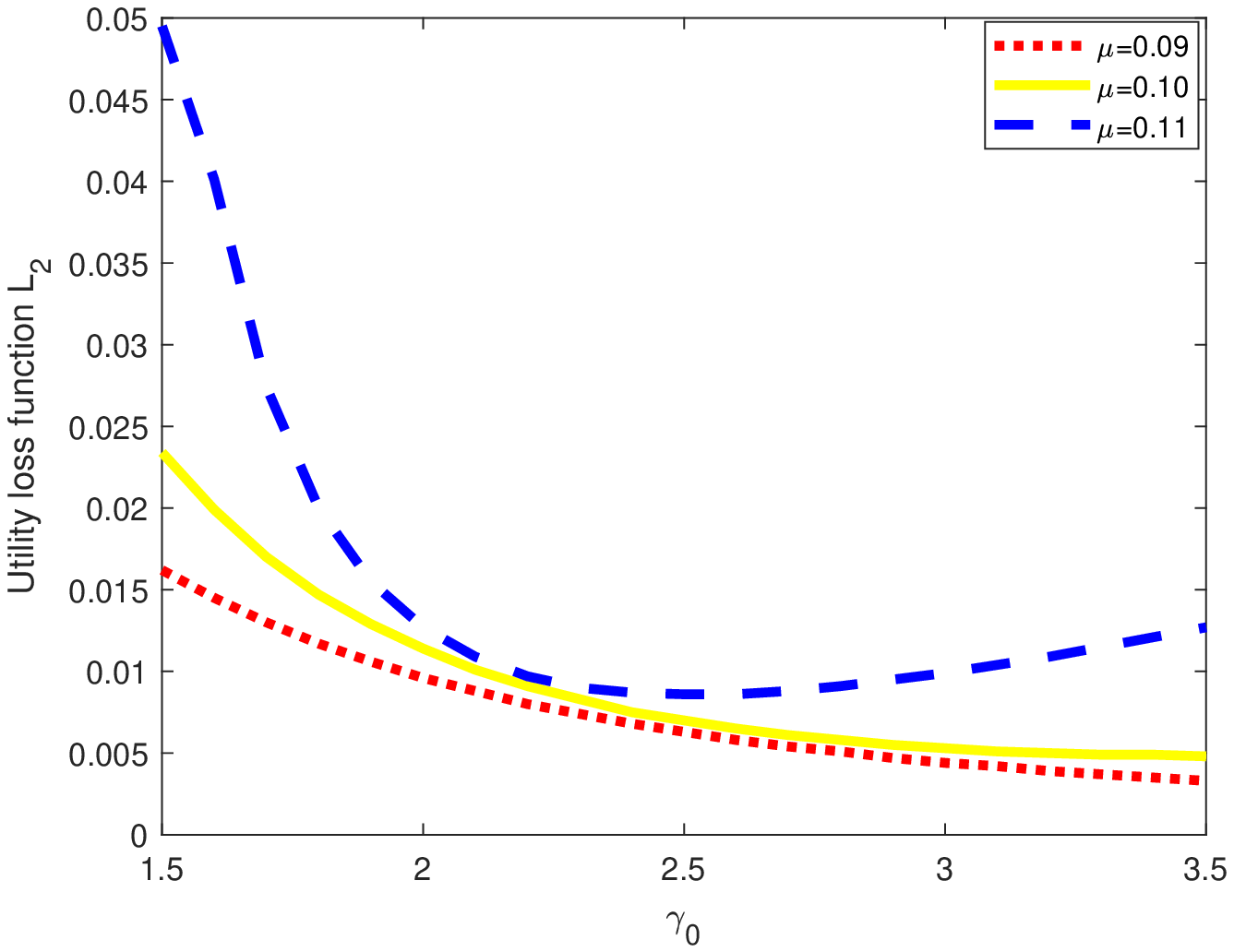}
        \caption*{(a)\;the impacts of $\mu$ and $\gamma_{0}$ on $L_2$}
	\end{minipage}%
	\begin{minipage}[t]{0.5\linewidth}
		\centering
		\includegraphics[width=3.0in]{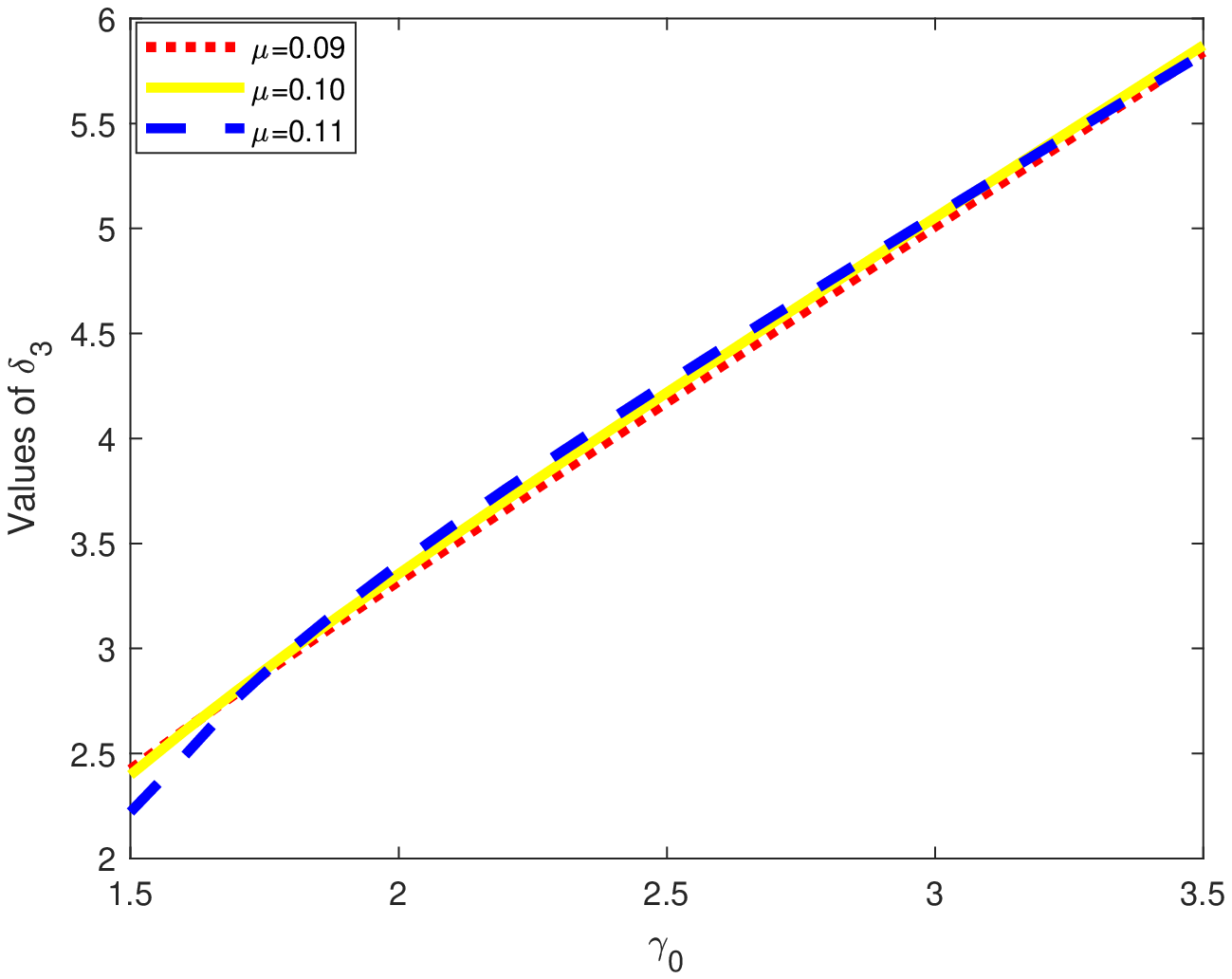}
        \caption*{(b)\;the corresponding values of $\delta_3$}
	\end{minipage}
    \caption{The impacts of $\mu$ and $\gamma_{0}$ on the utility loss function $L_2$ and values of $\delta_3$.}
	\label{fig10}
\end{figure}

Figures \ref{fig10} (a) and \ref{fig11} (a) show the impacts of the expected return rate of the risky asset $\mu$, the risk aversion coefficient $\gamma_{0}$, the skewness preference parameter $\phi_{0}$ and the volatility coefficient of the risky asset $\sigma$ on the utility loss function $L_2$. Specifically, we can see from Figure \ref{fig10} (a) that $L_2$ increases w.r.t. $\mu$ as well as changes with different risk aversion coefficients $\gamma_{0}$ and is not monotone in $\gamma_{0}$ under our setting. From Figure \ref{fig11} (a), we observe that $L_2$ increases w.r.t. $\phi_{0}$ and decreases w.r.t $\sigma$. Moreover, the utility loss function $L_2$ is always positive as expected, which also indicates that it is important and meaningful to consider model uncertainty when investing in the financial market.

\begin{figure}[htbp]
	\begin{minipage}[t]{0.5\linewidth}
		\centering
		\includegraphics[width=3.0in]{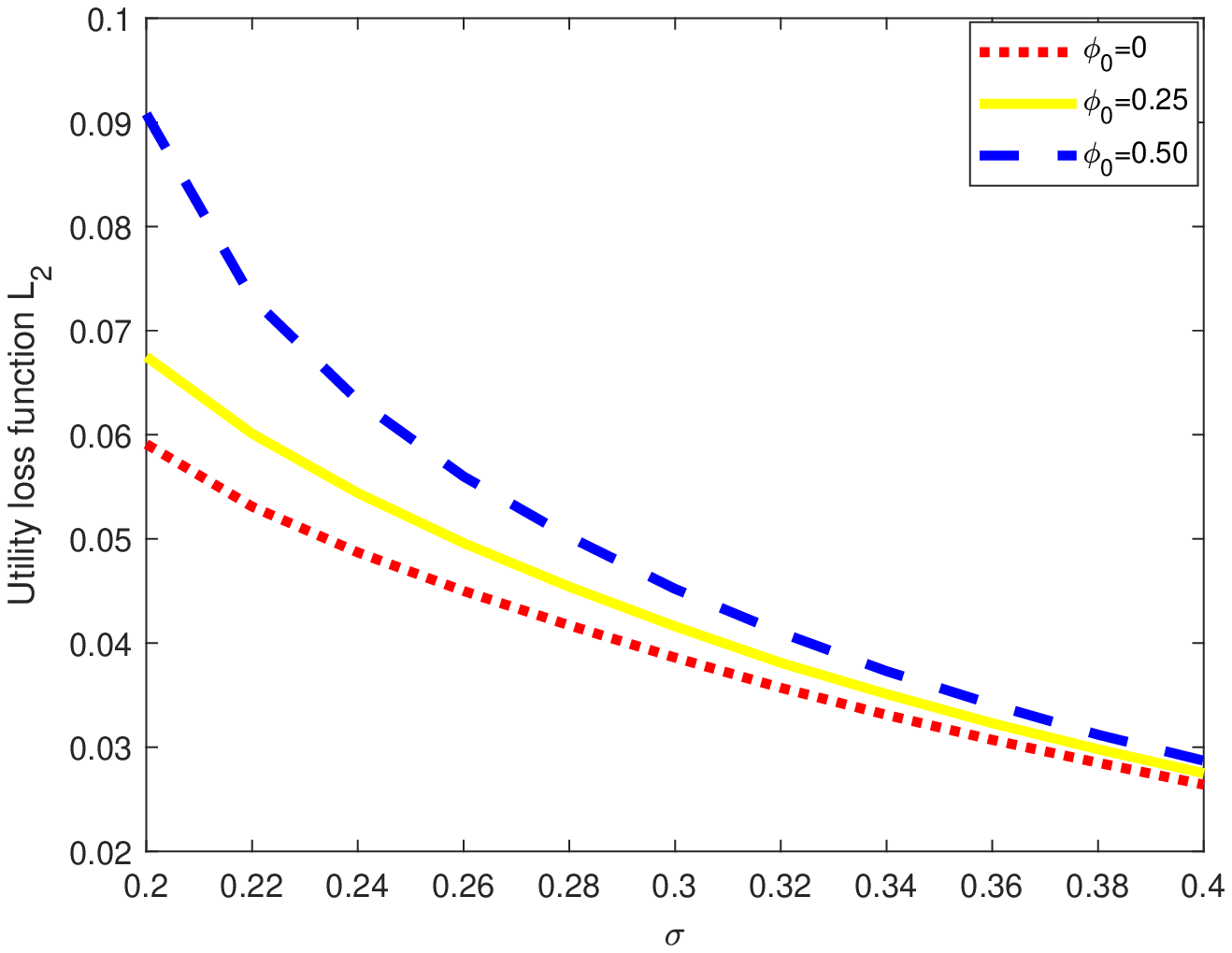}
        \caption*{(a)\;the impacts of $\phi_{0}$ and $\sigma$ on $L_2$}
	\end{minipage}%
	\begin{minipage}[t]{0.5\linewidth}
		\centering
		\includegraphics[width=3.0in]{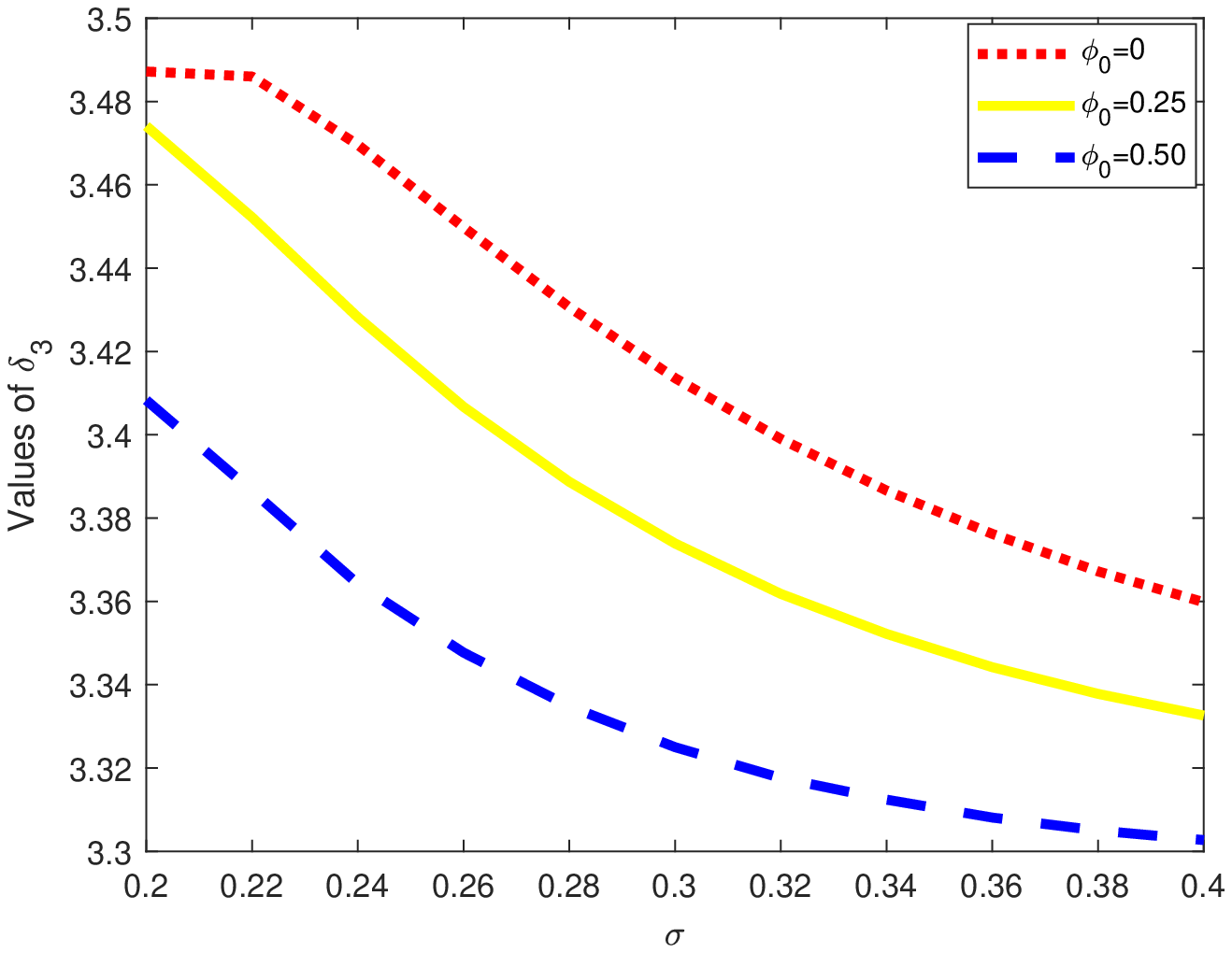}
        \caption*{(b)\;the corresponding values of $\delta_3$}
	\end{minipage}
    \caption{The impacts of $\phi_{0}$ and $\sigma$ on the utility loss function $L_2$ and values of $\delta_3$.}
	\label{fig11}
\end{figure}

Finally, we study the utility loss from ignoring both skewness preference and model uncertainty. Assume that the ambiguity-averse investor does not take the robust equilibrium investment strategy $u_{t}^{\ast}$ given in Theorem \ref{thm4.2}, but adopts strategy as if she/he were an ambiguity-neutral investor under the mean-variance criterion, i.e., the ambiguity-averse investor uses the strategy $\bar{u}_{t}^{\ast}$ given in Corollary \ref{cor3}. The equilibrium value function for the ambiguity-averse investor following the strategy $\bar{u}_{t}^{\ast}$ is defined by
\begin{equation*}
  V_{2}(t,w)=\inf\limits_{\textbf{q}\in\bm{Q}}\; \left\{\mathbb{E}^{\mathbb{Q}}_{t,w}\left[W^{\bar{u}_{t}^{\ast}}_{T}\right]-\frac{\gamma_{0}}{2w}Var^{\mathbb{Q}}_{t,w}\left[W^{\bar{u}_{t}^{\ast}}_{T}\right]
  +\mathbb{E}^{\mathbb{Q}}_{t,w}
\left[\int^{T}_{t}\frac{\textbf{q}'_{s}\textbf{q}_{s}}{2\xi\Phi(s,W_{s}) }\mathrm{d}s\right]\right\},
\end{equation*}
where $W^{\bar{u}_{t}^{\ast}}_{T}$ is the terminal wealth of the ambiguity-averse investor under the strategy $\bar{u}_{t}^{\ast}$ and $\Phi$ is the same as the one given in Theorem \ref{thm4.2} with $\phi_{0}=0$. Similar to the previous derivations, we derive the equilibrium value function $V_{2}$ under the strategy $\bar{u}_{t}^{\ast}$ as
\begin{equation}\label{eq47}
  V_{2}(t,w)=\left[\bar{a}_{1}(t)-\frac{\gamma_{0}}{2}\left(\bar{a}_{2}(t)-\bar{b}_{1}^{2}(t)\right)\right]w,
\end{equation}
where $\bar{a}(t)=\frac{\bar{a}_{1}(t)+\gamma_{0}\left(\bar{b}_{1}^{2}(t)-\bar{a}_{2}(t)\right)}{\gamma_{0}\bar{a}_{2}(t)}$ and
\begin{eqnarray*}
   \bar{a}_{2}(t)&=&e^{\int^{T}_{t}\left[2r_{s}+2\Theta_{s}\bar{f}(s)-\frac{2\xi\Theta_{s}\bar{f}^{2}(s)}{\bar{a}(s)}+\Theta_{s}\bar{f}^{2}(s)\right]\mathrm{d}s},\\
   \bar{b}_{1}(t)&=&e^{\int^{T}_{t}\left[r_{s}+\Theta_{s}\bar{f}(s)-\frac{\xi\Theta_{s}\bar{f}^{2}(s)}{\bar{a}(s)}\right]\mathrm{d}s},\\
   \bar{a}_{1}(t)&=&\bar{b}_{1}(t)+\int^{T}_{t}\frac{\xi \Theta_{s}\bar{f}^{2}(s)}{2}\gamma_{0} \bar{a}_{2}(s)
   \times e^{\int^{s}_{t}\left[r_{u}+\Theta_{u}\bar{f}(u)-\frac{\xi\Theta_{u}\bar{f}^{2}(u)}{\bar{a}(u)}\right]\mathrm{d}u}\mathrm{d}s
\end{eqnarray*}
with $\bar{a}_{1}(T)=\bar{a}_{2}(T)=\bar{b}_{1}(T)=1$ and $\bar{f}(t)$ is given in Corollary \ref{remark5}.
Then, the utility loss from ignoring both skewness preference and model uncertainty is defined as follows \cite{Zeng2016}
$$
L_{3}(t)=1-\frac{V_{2}(t,w)}{V(t,w)},
$$
where $V_{2}(t,w)$ and $V(t,w)$ are given by $\eqref{eq47}$ and $\eqref{eq35}$, respectively.

\begin{figure}[htbp]
    \centering
    {
        \begin{minipage}[t]{0.5\textwidth}
            \centering
            \includegraphics[width=1\textwidth]{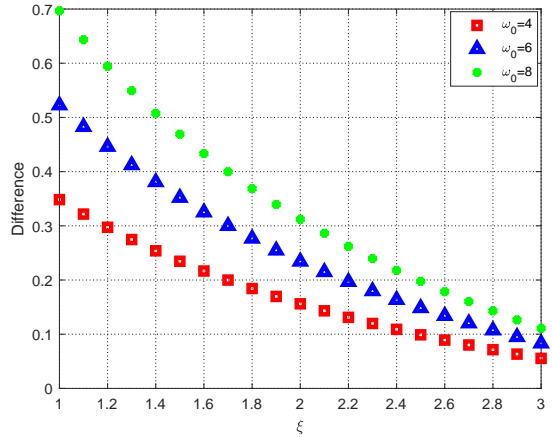}
        \end{minipage}
    }
    \caption{The difference between equilibrium investment strategies under different $w_{0}$ and $\xi$.}
    \label{fig13}
\end{figure}

\begin{figure}[htbp]
    \centering
    {
        \begin{minipage}[t]{0.5\textwidth}
            \centering
            \includegraphics[width=1\textwidth]{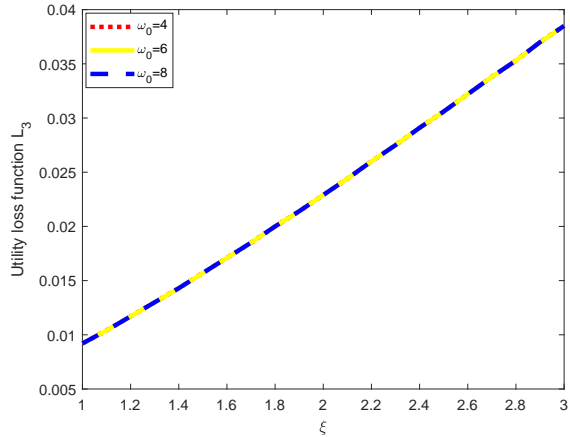}
        \end{minipage}
    }
    \caption{The impacts of $w_0$ and $\xi$ on the utility loss function $L_3$.}
    \label{fig14}
\end{figure}

We keep $\mu=0.10$ and carry out numerical experiments to show the difference between the robust equilibrium investment strategy for the ambiguity-averse investor under the mean-variance-skewness criterion with the skewness preference parameter $\phi_{0}=0.50$ and the equilibrium investment strategy for the ambiguity-neutral investor under the mean-variance criterion as well as the corresponding utility loss function $L_3$. From Figures \ref{fig13} and \ref{fig14}, one can find that the difference is always positive, so is $L_3$. Moreover, the difference decreases w.r.t. $\xi$ and increases w.r.t. $w_{0}$, while $L_3$ increases w.r.t. $\xi$ and is independent of $w_{0}$. Similar to the previous explanation, this can be understood as that the skewness preference dominates the ambiguity aversion level under our settings, which leads to that with the effect of both the skewness preference and ambiguity aversion, the investor invests more than the one who maintains a ambiguity-neutral attitude under the mean-variance criterion. Specially, when the ambiguity averse parameter increases, the degree of dominance is weakened, which corresponds to the case where the difference decreases w.r.t. $\xi$. Since the investment strategy is proportional to the initial wealth, the difference is more obvious when the initial wealth is larger. Note that $\delta_{3}=\gamma_{0}\bar{a}_{2}$ is positive for each case in Figures \ref{fig13} and \ref{fig14} and the condition in Remark 4.3 is satisfied.

\section{Conclusions}
This paper investigates a robust portfolio selection under the mean-variance-skewness criterion with wealth-dependent risk aversion and wealth-dependent skewness preference. Under a game theoretic framework, we characterize the robust optimization problem with alternative models and give the characterizations of the robust equilibrium investment strategy and corresponding equilibrium value function. We then derive an EDPE for the equilibrium value function and an extended HJBI system as well as an verification theorem to determine the robust equilibrium investment strategy and corresponding equilibrium value function for the robust time-consistent mean-variance-skewness portfolio selection problem. Moreover, we obtain the robust equilibrium investment strategy and corresponding equilibrium value function in semi-closed forms for a special robust portfolio selection problem which conclude some known results in the literature. Finally, numerical experiments indicate the following findings: (i) The investor with more initial wealth would invest more in the risky asset and should pay more attention to the ambiguity; (ii) The greater the risk aversion coefficient of the investor, the less she/he invests in the risky asset, and the risk aversion coefficient has a greater impact on the investor's decisions when the expected return rate of the risky asset is larger; (iii) In most cases, the mean-variance-skewness investor would invest more in the risky asset than the mean-variance investor and the greater the skewness preference parameter, the greater both the investment in risky asset and the corresponding utility loss function; (iv) When the volatility of the risky asset is small, the mean-variance-skewness investor would invest less in the risky asset than the mean-variance investor and the greater the skewness preference parameter, the less the investment in risky asset;  (v) The skewness preference has no impact on the robust equilibrium investment strategy when the risk aversion coefficient is large enough; (vi) The investor with higher degree of ambiguity aversion would reduce the amount invested in risky asset and the corresponding utility loss function becomes larger; (vii) The skewness preference could slow down the reduction of investment in the risky asset due to the ambiguity aversion of an investor and the effect of slowing down depends on the values of parameters of the skewness preference and the ambiguity aversion.

Recall that there exists a state variable $Z$ in the financial market, which can be used to describe the dynamics of stochastic volatility, stochastic interest rate, regime switching and so on \cite{Yan2020, Zhu2020}. Therefore, it would be interesting to consider some specific cases with additional state variable $Z$ and attempt to derive the (semi-)closed form solutions. Moreover, the price processes of risky assets may not be continuous in practice, but may jump in response to news or other surprise events \cite{Zeng2016, zhang2013fast}. Thus, we can incorporate jumps to the equations $\eqref{eq2}$ and $\eqref{eq3}$ to enrich theoretical and practical research. We leave these work for future study.

\section*{Acknowledgements}
The authors are grateful to the editors and reviewers whose helpful comments and suggestions have led to much improvement of the paper.

\section*{Appendices}
\appendix
\renewcommand{\appendixname}{Appendix~\Alph{section}}
\section{Proof of Lemma \ref{lemma2.1}}
\begin{proof}
By the definition of $J$ in the equation $\eqref{eq10}$ and the above notations in this lemma, we can obtain
$$
J(t,x;\bm{\pi},\textbf{q})=h^{\bm{\pi},\textbf{q}}(t,x,t,x)+G(t,x,g^{\bm{\pi},\textbf{q}}(t,x))+K(t,x,g^{\bm{\pi},\textbf{q}}(t,x),k^{\bm{\pi},\textbf{q}}(t,x)).
$$
Since $g^{\bm{\pi},\textbf{q}}(t,x)=\mathbb{E}^{\mathbb{Q}}_{t,x}\left[g^{\bm{\pi},\textbf{q}}(s,X_{s})\right]$ and $k^{\bm{\pi},\textbf{q}}(t,x)=\mathbb{E}^{\mathbb{Q}}_{t,x}\left[k^{\bm{\pi},\textbf{q}}(s,X_{s})\right]$, we have
\begin{align*}
 K(t,x,g^{\bm{\pi},\textbf{q}}(t,x),k^{\bm{\pi},\textbf{q}}(t,x))
 =\mathbb{E}^{\mathbb{Q}}_{t,x}\left[K(s,X_{s},g^{\bm{\pi},\textbf{q}}(s,X_{s}),k^{\bm{\pi},\textbf{q}}(s,X_{s}))\right]+L^{\bm{\pi},\textbf{q}}_{K}(t,x,s).
\end{align*}
By the proof of Lemma 8 in \cite{Pun2}, one has
$$
G(t,x,g^{\bm{\pi},\textbf{q}}(t,x))=\mathbb{E}^{\mathbb{Q}}_{t,x}\left[G(s,X_{s},g^{\bm{\pi},\textbf{q}}(s,X_{s}))\right]+L^{\bm{\pi},\textbf{q}}_{G}(t,x,s)
$$
and
$$
h^{\bm{\pi},\textbf{q}}(t,x,t,x)=\mathbb{E}^{\mathbb{Q}}_{t,x}\left[h^{\bm{\pi},\textbf{q}}(s,X_{s},s,X_{s})+\int^{s}_{t}C(\tau,X_{\tau},\tau,X_{\tau};\textbf{q})\mathrm{d}\tau\right]
+L^{\bm{\pi},\textbf{q}}_{C}(t,x,s)+L^{\bm{\pi},\textbf{q}}_{H}(t,x,s).
$$
Then the result follows.
\end{proof}

\section{Proof of Theorem \ref{thm3.1}}
\begin{proof} Step 1. We prove that $V(t,x)=J(t,x;\bm{\pi}^{\ast},\bm{q}^{\ast})$. In light of the Feynman-Kac theorem \cite{Yong1999}, one has $g(t,x)=\mathbb{E}^{\mathbb{Q^{*}}}_{t,x}\left[W_{T}\right]$, $k(t,x)=\mathbb{E}^{\mathbb{Q^{*}}}_{t,x}\left[(W_{T})^{2}\right]$ and
$$
h(t,x,s,y)=\mathbb{E}^{\mathbb{Q^{*}}}_{t,x}\left[\int^{T}_{t}C(s,y,\tau,X_{\tau};\textbf{q}^{*})\mathrm{d}\tau+F(s,y,W_{T})\right].
$$
By $(\bm{\pi}^{*}, \textbf{q}^{*})$, the HJBI equation $\eqref{eq16}$ becomes a PDE
$$
\mathcal{A^{\bm{\pi}^{*},\textbf{q}^{*}}}V(t,x)+C(t,x,t,x;\textbf{q}^{*})+\mathcal{L}(t,x,\bm{\pi}^{*},\textbf{q}^{*},h,g,k)=0.
$$
It follows from equations $\eqref{eq17}$-$\eqref{eq19}$ that
\begin{align*}
\mathcal{L}(t,x,\bm{\pi}^{*},\textbf{q}^{*},h,g,k)=&-C(t,x,t,x;\textbf{q}^{*})-\mathcal{A^{\bm{\pi}^{*},\textbf{q}^{*}}}h(t,x,t,x)\\
&-\mathcal{A^{\bm{\pi}^{*},\textbf{q}^{*}}}
G(t,x,g(t,x))-\mathcal{A^{\bm{\pi}^{*},\textbf{q}^{*}}}K(t,x,g(t,x),k(t,x)).
\end{align*}
By using the Dynkin's Theorem \cite{Oksendal2013}, we can derive
\begin{align*}
 \mathbb{E}^{\mathbb{Q^{*}}}_{t,x}\left[V(T,X_{T})\right]=&V(t,x)+ \mathbb{E}^{\mathbb{Q^{*}}}_{t,x}\left[\int^{T}_{t}\mathcal{A^{\bm{\pi}^{*},\textbf{q}^{*}}}V(\tau,X_{\tau})\mathrm{d}\tau\right]\\
 =&V(t,x)+ \mathbb{E}^{\mathbb{Q^{*}}}_{t,x}\left[\int^{T}_{t}\mathcal{A^{\bm{\pi}^{*},\textbf{q}^{*}}}h(\tau,X_{\tau},\tau,X_{\tau})\mathrm{d}\tau\right]\\
 &+\mathbb{E}^{\mathbb{Q^{*}}}_{t,x}\left[\int^{T}_{t}\left(\mathcal{A^{\bm{\pi}^{*},\textbf{q}^{*}}}
G(\tau,X_{\tau},g(\tau,X_{\tau}))+\mathcal{A^{\bm{\pi}^{*},\textbf{q}^{*}}}K(\tau,X_{\tau},g(\tau,X_{\tau}),k(\tau,X_{\tau}))\right)\mathrm{d}\tau\right]\\
=&V(t,x)+ \mathbb{E}^{\mathbb{Q^{*}}}_{t,x}\left[h(T,X_{T},T,X_{T})-h(t,x,t,x)-G\left(t,x,\mathbb{E}^{\mathbb{Q^{*}}}_{t,x}\left[W_{T}\right]\right)\right]\\
&+\mathbb{E}^{\mathbb{Q^{*}}}_{t,x}\left[G(T,X_{T},W_{T})+
K\left(T,X_{T},W_{T},(W_{T})^{2}\right)-K\left(t,x,\mathbb{E}^{\mathbb{Q^{*}}}_{t,x}\left[W_{T}\right],\mathbb{E}^{\mathbb{Q^{*}}}_{t,x}\left[(W_{T})^{2}\right]\right)\right].
\end{align*}
Thus, we have
\begin{equation*}
  V(t,x)=h(t,x,t,x)+G\left(t,x,\mathbb{E}^{\mathbb{Q^{*}}}_{t,x}\left[W_{T}\right]\right)
  +K\left(t,x,\mathbb{E}^{\mathbb{Q^{*}}}_{t,x}\left[W_{T}\right],\mathbb{E}^{\mathbb{Q^{*}}}_{t,x}\left[(W_{T})^{2}\right]\right)=J(t,x;\bm{\pi}^{\ast},\bm{q}^{\ast}).
\end{equation*}

Step 2. Deploying the similar procedure of the proof of Theorem 10 in \cite{Pun2}, we can show that $(\bm{\pi}^{*}, \textbf{q}^{*})$ satisfies two inequalities in Definition \ref{definition3.2}, which implies that $(\bm{\pi}^{*}, \textbf{q}^{*})$ is the equilibrium control-measure strategy and $\bm{\pi}^{*}$ is the robust equilibrium investment strategy.
\end{proof}

\section{Proof of Lemma \ref{lemma4.1}}
\begin{proof}
In view of the condition $(ii)$ in Definition \ref{definition4.1}, we have $\mathbb{E}^{\mathbb{P}}\left[\left(\int^{T}_{0}\|q^{S}_{s}\|^{2}\mathrm{d}s\right)^{p}\right]<\infty$ for any $p\geq1$. Moreover, by using the H\"{o}lder's inequality, one has
\begin{align*}
\mathbb{E}^{\mathbb{Q}}\left[\left(\int^{T}_{0}\|q^{S}_{s}\|^{2}\mathrm{d}s\right)^{p}\right]
\leq\left(\mathbb{E}^{\mathbb{P}}\left[\left(\int^{T}_{0}\|q^{S}_{s}\|^{2}\mathrm{d}s\right)^{2p}\right]\right)^{\frac{1}{2}}
\times\left(\mathbb{E}^{\mathbb{P}}\left[\exp\left(6\int^{T}_{0}\|q^{S}_{s}\|^{2}\mathrm{d}s\right)\right]\right)^{\frac{1}{4}}<\infty,
\end{align*}
where the penultimate equality holds because one can define a new measure with the Radon-Nikodym derivative $\mathcal{\varepsilon}_{T}\left(4q^{S}\cdot W^{S}\right)$
by the Novikov's condition $\eqref{eq7}$.

By the conditions $(iii)$ and $(iv)$ in Definition \ref{definition4.1}, Doob's martingale maximal inequality and the H\"{o}lder's inequality, we have
\begin{align*}
&\mathbb{E}^{\mathbb{Q}}\left[\sup\limits_{t\in[0,T]}\mid W_{t}\mid^{4}\right]\\
\leq&L+L\mathbb{E}^{\mathbb{Q}}\left[\left(\int^{T}_{0}\left(\| u_{s}'\beta_{s}\|+\| u_{s}'\sigma_{s}q^{S}_{s}\|\right)\mathrm{d}s\right)^{4}\right]
+L\mathbb{E}^{\mathbb{Q}}\left[\left(\sup\limits_{t\in[0,T]}\int^{t}_{0}e^{\int^{T}_{s}r_{v}\mathrm{d}v}u_{s}'\sigma_{s}\mathrm{d}W^{S,\mathbb{Q}}_{s}\right)^{4}\right]\\
\leq&L+L\mathbb{E}^{\mathbb{Q}}\left[\left(\int^{T}_{0}\| u_{s}\|^{2}\mathrm{d}s\right)^{4}\right]+L\mathbb{E}^{\mathbb{Q}}\left[\left(\int^{T}_{0}\| q^{S}_{s}\|^{2}\mathrm{d}s\right)^{4}\right]+L\left(\mathbb{E}^{\mathbb{Q}}\left[\left(\int^{T}_{0}\| u_{s}\|^{2}\mathrm{d}s\right)^{4}\right]\right)^{\frac{1}{2}}<\infty,
\end{align*}
where we use the property that $r_{t}$, $\mu_{t}$ and $\sigma_{t}$ are bounded for $t\in[0,T]$ and $L$ is a positive constant that may change from line to line.
\end{proof}

\section{Proof of Theorem \ref{thm4.1}}
\begin{proof}
Step 1.  We show that $f_{7}(t)$ is nonzero for any $t\in[0,T]$, namely, the integral equation $\eqref{eq32}$ is well defined, where
 $$f_{7}(t)=\gamma_{0}e^{\int^{T}_{t}\left[2r_{s}+\frac{2\Theta_{s}f(s)}{(\xi+1)^{2}}+\frac{\Theta_{s}f^{2}(s)}{(\xi+1)^{2}}\right]\mathrm{d}s}+2\phi_{0}
   e^{\int^{T}_{t}\left[3r_{s}+\frac{3\Theta_{s}f(s)}{(\xi+1)^{2}}+\frac{\Theta_{s}f^{2}(s)}{(\xi+1)^{2}}\right]\mathrm{d}s}\left(
   1-e^{\int^{T}_{t}\frac{2\Theta_{s}f^{2}(s)}{(\xi+1)^{2}}\mathrm{d}s}\right).$$
Otherwise, if
\begin{equation}\label{eq33}
  \gamma_{0}e^{\int^{T}_{t_{0}}\left[2r_{s}+\frac{2\Theta_{s}f(s)}{(\xi+1)^{2}}+\frac{\Theta_{s}f^{2}(s)}{(\xi+1)^{2}}\right]\mathrm{d}s}+2\phi_{0}
   e^{\int^{T}_{t_{0}}\left[3r_{s}+\frac{3\Theta_{s}f(s)}{(\xi+1)^{2}}+\frac{\Theta_{s}f^{2}(s)}{(\xi+1)^{2}}\right]\mathrm{d}s}\left(
   1-e^{\int^{T}_{t_{0}}\frac{2\Theta_{s}f^{2}(s)}{(\xi+1)^{2}}\mathrm{d}s}\right)=0
\end{equation}
for some $t_{0}\in[0,T]$, then
$$
e^{\int^{T}_{t_{0}}\left[r_{s}+\frac{\Theta_{s}f(s)}{(\xi+1)^{2}}\right]\mathrm{d}s}\left(
   e^{\int^{T}_{t_{0}}\frac{2\Theta_{s}f^{2}(s)}{(\xi+1)^{2}}\mathrm{d}s}-1\right)=\frac{\gamma_{0}}{2\phi_{0}},
$$
which implies that $f(t)$ is finite over $[t_{0},T]$. However, this contradicts with the fact that the left side of the integral equation $\eqref{eq32}$ is infinite by $\eqref{eq33}$. Thus, $f_{7}(t)$ is nonzero for any $t\in[0,T]$ and the integral equation $\eqref{eq32}$ is well defined.

Step 2. We prove that $f(t)$ must be bounded over $[0,T]$. Suppose that $f(t)$ is positive and large enough such that $e^{\int^{T}_{t}3\left[r_{s}+\frac{\Theta_{s}f(s)}{(\xi+1)^{2}}+\frac{\Theta_{s}f^{2}(s)}{(\xi+1)^{2}}\right]\mathrm{d}s}$ dominates $e^{\int^{T}_{t}\left[r_{s}+\frac{\Theta_{s}f(s)}{(\xi+1)^{2}}\right]\mathrm{d}s}$ and $e^{\int^{T}_{t}\frac{\Theta_{s}f^{2}(s)}{(\xi+1)^{2}}\mathrm{d}s}$ for $t\in[0,T]$. We can show that the ratio of the sum of the first term and the second term in curly brackets on the right side of $\eqref{eq32}$ to $f_{7}(t)$ and the ratio of the fourth item in curly brackets on the right side of $\eqref{eq32}$ to $f_{7}(t)$ are both close to 0, and the ratio of the third term in curly brackets on the right side of $\eqref{eq32}$ to $f_{7}(t)$ closes to $-\frac{1}{2}$. Therefore, the right side of $\eqref{eq32}$ closes to $-\frac{1}{2}$. This is impossible by the left side of the equation $\eqref{eq32}$. Thus, $f(t)$ has an upper bound over $[0,T]$. Similarly, we can prove that $f(t)$ has a lower bound over $[0,T]$. On the whole, there exist constants $L_{0}$ and $L_{1}$ satisfying $-\infty<L_{0}\leq f(t)\leq L_{1}<\infty$ for $t\in[0,T]$.

Step 3. It follows from the equation $\eqref{eq31}$ and the step 2 that the functions $h_{1}(t)$, $h_{2}(t)$, $h_{3}(t)$ and $g_{1}(t)$ are also bounded for $t\in[0,T]$. Since $f_{7}(t)=\gamma_{0}h_{2}(t)+2\phi_{0}(g_{1}(t)h_{2}(t)-h_{3}(t))$ is nonzero for $t$ over $[0, T]$ by step 1, we denote
$$
f_{1}(t)=\frac{h_{1}(t)}{f_{7}(t)},\;f_{2}(t)=\frac{g_{1}^{2}(t)}{f_{7}(t)},\;f_{3}(t)=\frac{h_{2}(t)}{f_{7}(t)},\;f_{4}(t)=\frac{h_{3}(t)}{f_{7}(t)},\;
f_{5}(t)=\frac{g_{1}^{3}(t)}{f_{7}(t)},\;f_{6}(t)=\frac{g_{1}(t)h_{2}(t)}{f_{7}(t)}.
$$
Then one has
$$f(t)=f_{1}(t)+\gamma_{0}\left(f_{2}(t)-f_{3}(t)\right)+\phi_{0}\left(f_{4}(t)+2f_{5}(t)-3f_{6}(t)\right).$$
Moreover, by the equation $\eqref{eq30}$, we can deduce $\frac{\partial f_{i}(t)}{\partial t}=P_{i}(t,f_{1},f_{2},f_{3},f_{4},f_{5},f_{6})$, where $P_{i}\;(i=1,\cdots,6)$ is a quartic or cubic polynomial function of $(f_{1},f_{2},f_{3},f_{4},f_{5},f_{6})$ with bounded Lipschitz continuous coefficient functions of $t$. In light of the uniqueness theorem \cite{Zwillinger1998}, we derive that the system of equations $\frac{\partial f_{i}(t)}{\partial t}=P_{i}(t,f_{1},f_{2},f_{3},f_{4},f_{5},f_{6})$ admits a unique solution. Therefore, the integral equation $\eqref{eq32}$ admits a unique solution.
\end{proof}

\section{Proof of Theorem \ref{thm4.2}}
\begin{proof}
We verify that the equilibrium control-measure strategy $(u_{t}^{\ast},{q^{S}_{t}}^{\ast})\in\Pi_{1} \times \bm{Q}_{1}$. Since $r_{t}$, $\beta_{t}$, $\sigma_{t}$, $\Theta_{t}$  and $f(t)$ are deterministic and bounded functions in $t$ over $[0,T]$ and $\xi$ is a positive constant, it follows from $\eqref{eq31}$ that $h_{1}(t)$, $h_{2}(t)$, $h_{3}(t)$ and $g_{1}(t)$ are also bounded for $t\in[0,T]$. Then, it is easy to see that the conditions $(i)$ and $(ii)$ in Definition \ref{definition4.1} hold. With the equilibrium control-measure strategy $\eqref{eq36}$, the corresponding wealth process can be rewritten as
$$
\mathrm{d}W_{t}^{\ast}=\left[r_{t}W_{t}^{\ast}+\frac{\Theta'_{t}f(t)}{(\xi+1)^{2}}W_{t}^{\ast}\right]\mathrm{d}t
+\frac{\beta'_{t}(\Sigma^{-1}_{t})'}{\xi+1}\sigma_{t}f(t)W_{t}^{\ast}\mathrm{d}W^{S,\mathbb{Q}^{\ast}}_{t}
$$
and so
$$
W_{t}^{\ast}=w_{0}\exp\left[\int^{t}_{0}\left(r_{s}+\frac{\Theta'_{s}f(s)}{(\xi+1)^{2}}-\frac{\Theta'_{s}f^{2}(s)}{2(\xi+1)^{2}}\right)\mathrm{d}s
+\int^{t}_{0}\frac{\beta'_{s}(\Sigma^{-1}_{s})'}{\xi+1}\sigma_{s}f(s)\mathrm{d}W^{S,\mathbb{Q}^{\ast}}_{s}\right].
$$
Thus, the condition $(iii)$ in Definition \ref{definition4.1} is fulfilled. In addition, given a positive initial wealth $w_{0}$, the wealth process remains positive. Then, the conditions $W_{t}=w\neq0$, $\gamma(W_{t})>0$, $\frac{\mathrm{d} \gamma(W_{t})}{\mathrm{d}W_{t}}<0$, $\phi(W_{t})>0$ and $\frac{\mathrm{d} \phi(W_{t})}{\mathrm{d}W_{t}}<0$ hold. With the equilibrium control-measure strategy $\eqref{eq36}$ and the corresponding measure $\mathbb{Q}^{*}$, we have
\begin{align*}
\mathbb{E}^{\mathbb{Q}^{*}}\left[\left(\int^{T}_{0}\|u^{*}_{s}\|^{2}\mathrm{d}s\right)^{4}\right]=&
\mathbb{E}^{\mathbb{Q}^{*}}\left[\left(\int^{T}_{0}\|\frac{W_{s}}{\xi+1}\Sigma_{s}^{-1}\beta_{s}f(s)\|^{2}\mathrm{d}s\right)^{4}\right]
\leq L\mathbb{E}^{\mathbb{Q}^{*}}\left[\left(\int^{T}_{0}\|W_{s}\|^{2}\mathrm{d}s\right)^{4}\right]
\end{align*}
and
\begin{align*}
&\mathbb{E}^{\mathbb{Q}^{*}}\left[\left(\int^{T}_{0}\|W_{s}\|^{2}\mathrm{d}s\right)^{4}\right]\\
\leq&L\left(\int^{T}_{0}w_{0}^{4}\exp\left[\int^{s}_{0}\left(4r_{u}+\frac{4\Theta'_{u}f(u)}{(\xi+1)^{2}}\right)\mathrm{d}u
\right]\mathrm{d}s\right)^{2}
\times\int^{T}_{0}\exp\left[\int^{s}_{0}\frac{6\Theta'_{u}f^{2}(u)}{(\xi+1)^{2}}\mathrm{d}u\right]\mathrm{d}s<\infty,
\end{align*}
where $L$ is a positive constant that may differ from line to line. Thus, $\mathbb{E}^{\mathbb{Q}^{*}}\left[\left(\int^{T}_{0}\|u^{*}_{s}\|^{2}\mathrm{d}s\right)^{4}\right]<\infty$. Namely, the condition $(iv)$ in Definition \ref{definition4.1} is verified. Since the proof of Lemma \ref{lemma4.1} is independent of the condition $(v)$ in Definition \ref{definition4.1} and
$\Phi^{-1}(t,w)=-\delta_{1}^{2}/\delta_{2}=f^{2}(t)\left(\gamma_{0}h_{2}(t)+2\phi_{0}(g_{1}(t)h_{2}(t)-h_{3}(t))\right)w,$
we can derive $\mathbb{E}^{\mathbb{Q}^{*}}\left[\sup\limits_{t\in[0,T]}\Phi^{-2}(t,W_{t}) \right]\leq L\mathbb{E}^{\mathbb{Q}^{*}}\left[\sup\limits_{t\in[0,T]} W^{2}_{t}\right]<\infty$ by Lemma \ref{lemma4.1}. Furthermore, the smoothness condition of these functions in Definition \ref{definition3.3} is satisfied under our setting.
\end{proof}


\begin{thebibliography}{99}


\bibitem{Anderson2003}
E.W. Anderson, L.P. Hansen and T.J. Sargent,
 A quartet of semigroups for model specification, robustness, prices of risk, and model detection,
{\em Journal of the European Economic Association}, {\bf1(1)}:68-123, 2003.

\bibitem{Basak2010}
S. Basak and G. Chabakauri,
Dynamic mean-variance asset allocation,
{\em Review of Financial Studies}, {\bf23(8)}:2970-3016, 2010.

\bibitem{Bi2019}
J. Bi and J. Cai,
 Optimal investment-reinsurance strategies with state dependent risk aversion and VaR constraints in correlated markets,
{\em Insurance: Mathematics and Economics}, {\bf85}:1-14, 2019.

\bibitem{Bielecki2005}
T.R. Bielecki, H. Jin, S.R. Pliska and X.Y. Zhou,
Continuous-time mean-variance portfolio selection with bankruptcy prohibition,
{\em Mathematical Finance}, {\bf15(2)}:213-244, 2005.

\bibitem{Bjork2009}
T. Bj{\"{o}}rk and A. Murgoci,
 A general theory of Markovian time inconsistent stochastic control problems,
{\em Preprint SSRN 55}, 2010.

\bibitem{Bjork2014}
T. Bj{\"{o}}rk, A. Murgoci and X.Y. Zhou,
 Mean variance portfolio optimization with state dependent risk aversion,
{\em Mathematical Finance}, {\bf24(1)}:1-24, 2014.

\bibitem{Branger1}
N. Branger and L.S. Larsen,
 Robust portfolio choice with uncertainty about jump and diffusion risk,
{\em Journal of Banking and Finance}, {\bf37(12)}:5036-5047, 2013.

\bibitem{Branger2}
N. Branger, L.S. Larsen and C.  Munk,
 Robust portfolio choice with ambiguity and learning about return predictability,
{\em Journal of Banking and Finance}, {\bf37(5)}:1397-1411, 2013.

\bibitem{Brocas2016}
I. Brocas, J.D. Carrillo, A. Giga and F. Zapatero,
 Skewness seeking in a dynamic portfolio choice experiment,
{\em CEPR Discussion Paper No. DP11056, 8th Miami Behavioral Finance Conference}, 2017.

\bibitem{Dai2021}
M. Dai, H. Jin, S. Kou and Y. Xu,
A dynamic mean-variance analysis for log returns,
{\em Management Science}, {\bf67(2)}:1093-1108, 2021.

\bibitem{Ebert2019}
S. Ebert and C. Hilpert,
Skewness preference and the popularity of technical analysis.
{\em Journal of Banking and Finance}, {\bf109}:105675, 2019.

\bibitem{Eeckhoudt2006}
L. Eeckhoudt and H. Schlesinger,
Putting risk in its proper place,
{\em American Economic Review}, {\bf96(1)}:280-289, 2006.

\bibitem{Eeckhoudt2008}
L. Eeckhoudt and H. Schlesinger,
Changes in risk and the demand for saving,
{\em Journal of Monetary Economics}, {\bf55(7)}:1329-1336, 2008.

\bibitem{Ekeland2006}
I. Ekeland and A. Lazrak.
 Being serious about non-commitment: subgame perfect equilibrium in continuous time.
{\em Preprint. University of British Columbia}, 2006.

\bibitem{Ekeland2008}
I. Ekeland and T.A. Pirvu,
 Investment and consumption without commitment,
{\em Mathematics and Financial Economics}, {\bf2(1)}:57-86, 2008.

\bibitem{Ellsberg1961}
D. Ellsberg,
 Risk, ambiguity, and the Savage axioms,
{\em The Quarterly Journal of Economics}, {\bf75}:643-669, 1961.

\bibitem{Gu2020}
J.W. Gu, S. Si and H. Zheng,
Constrained utility deviation-risk optimization and time-consistent HJB equation,
{\em SIAM Journal on Control and Optimization}, {\bf58(2)}:866-894, 2020.

\bibitem{Hu2012}
Y. Hu, H. Jin and X.Y. Zhou,
 Time-consistent stochastic linear-quadratic control,
{\em SIAM Journal on Control and Optimization}, {\bf50(3)}:1548-1572, 2012.

\bibitem{Kimball1990}
M.S. Kimball,
 Precautionary saving in the small and in the large,
{\em Econometrica}, {\bf58(1)}:53-73, 1990.

\bibitem{Kumar2009}
A. Kumar,
 Who gambles in the stock market?
{\em The Journal of Finance}, {\bf64(4)}:1889-1933, 2009.

\bibitem{Lai2012}
J.Y. Lai,
 An empirical study of the impact of skewness and kurtosis on hedging decisions,
{\em Quantitative Finance}, {\bf12(12)}:1827-1837, 2012.

\bibitem{Li2000}
D. Li and W.L. Ng,
 Optimal dynamic portfolio selection: Multi-period mean-variance formulation,
{\em Mathematical Finance}, {\bf10(3)}:387-406, 2000.

\bibitem{LiY2013Optimal}
Y.W. Li and Z.F. Li,
 Optimal time-consistent investment and reinsurance strategies for mean-variance insurers with state dependent risk aversion,
{\em Insurance: Mathematics and Economics}, {\bf53(1)}:86-97, 2013.

\bibitem{Lim2004}
A.E.B. Lim,
 Quadratic hedging and mean-variance portfolio selection with random parameters in an incomplete market,
{\em Mathematics of Operations Research}, {\bf29(1)}:132-161, 2004.

\bibitem{Maenhout2004}
P.J. Maenhout,
 Robust portfolio rules and asset pricing,
{\em Review of financial studies}, {\bf17(4)}:951-983, 2004.

\bibitem{Markowitz1952}
H. Markowitz,
 Portfolio selection,
{\em Journal of Finance}, {\bf7(1)}:77-91, 1952.

\bibitem{Mu2019}
C.M. Mu, W.D. Tian and J.Q. Yang,
 Portfolio choice with skewness preference and wealth-dependent risk aversion,
{\em Quantitative Finance}, {\bf19(11)}:1905-1919, 2019.

\bibitem{Oksendal2013}
B. {\O}ksendal,
{\em Stochastic Differential Equations: An Introduction with Applications},
Springer-Verlag, Berlin, 2013.

\bibitem{Pun2018}
C.S. Pun,
 Robust time-inconsistent stochastic control problems,
{\em Automatica}, {\bf94}:249-257, 2018.

\bibitem{Pun2}
C.S. Pun,
 Robust time-inconsistent stochastic control problems (Extended Version),
Available at SSRN:https://ssrn.com/abstract=3035656, 2018.

\bibitem{Strotz1955}
R.H. Strotz,
 Myopia and inconsistency in dynamic utility maximization,
{\em Review of Economic Studies}, {\bf23(3)}:165-180, 1955.


\bibitem{Wang2021}
L.Y. Wang, Z.P. Chen and P. Yang,
Robust equilibrium control-measure policy for a DC pension plan with state-dependent risk aversion under mean-variance criterion,
{\em Journal of Industrial and Management Optimization}, {\bf17(3)}:1203-1233, 2021.


\bibitem{Wang2021QF}
T. Wang, Z. Jin and J. Wei,
Mean-variance portfolio selection with non-negative state-dependent risk aversion,
{\em Quantitative Finance}, {\bf21(4)}:657-671, 2021.


\bibitem{Yan2020}
T. Yan, B. Han, C.S. Pun and H.Y. Wong,
Robust time-consistent mean-variance portfolio selection problem with multivariate stochastic volatility,
{\em Mathematics and Financial Economics}, {\bf14}:699-724, 2020.

\bibitem{Yang20a}
B.Z. Yang, X. Lu, G. Ma and S.P. Zhu,
Robust portfolio optimization with multi-factor stochastic volatility,
{\it Journal of Optimization Theory and Applications}, {\bf 186}:264-298, 2020.

\bibitem{Yi2013}
B. Yi, Z.F. Li, F.G. Viens and Y. Zeng,
 Robust optimal control for an insurer with reinsurance and investment under Heston's stochastic volatility model,
{\em Insurance: Mathematics and Economics}, {\bf53(3)}:601-614, 2013.

\bibitem{Yong1999}
J. Yong and X. Y. Zhou,
{\em Stochastic Controls: Hamiltonian Systems and HJB Equations},
Springer-Verlag, New York, 1999.

\bibitem{Zeng2016}
Y. Zeng, D. Li and A. Gu,
 Robust equilibrium reinsurance-investment strategy for a mean-variance insurer in a model with jumps,
{\em Insurance: Mathematics and Economics}, {\bf66}:138-152, 2016.


\bibitem{zhang2013fast}
S.M. Zhang and L.H. Wang,
 A fast numerical approach to option pricing with stochastic interest rate, stochastic volatility and double jumps,
{\em Communications in Nonlinear Science and Numerical Simulation}, {\bf18(7)}:1832-1839, 2013.

\bibitem{Zhou2000}
X.Y. Zhou and D. Li,
 Continuous-time mean-variance portfolio selection: A stochastic LQ framework,
{\em Applied Mathematics and Optimization}, {\bf42(1)}:19-33, 2000.


\bibitem{Zhu2020}
Y.L. Zhu, K. Wang and Y. Ren,
 Dynamics of a mean-reverting stochastic volatility equation with regime switching,
{\em Communications in Nonlinear Science and Numerical Simulation}, {\bf83}:105110, 2020.


\bibitem{Zwillinger1998}
D. Zwillinger,
 Handbook of Differential Equations,
{\em Academic Press}, New York, 1997.

\end{thebibliography}
\end{document}